\def\titlerunning#1{\gdef\titrun{#1}}
\def\author#1{\gdef\autrun{\def\and{\unskip, }#1}\gdef\@author{#1}}
\def\address#1{{\def\and{\\\hspace*{18pt}}\renewcommand{\thefootnote}{}%
\footnote {#1}}%
\markboth{\autrun}{\titrun}}
\def\email#1{e-mail: #1}
\def\subjclass#1{{\renewcommand{\thefootnote}{}%
\footnote{\emph{Mathematics Subject Classification (2010):} #1}}}
\newcommand{\eqand}{\ensuremath{\quad \textrm{and} \quad}}
\newcommand{\ssq}{\ensuremath{\subseteq}}
\newcommand{\smin}{\ensuremath{\setminus}}
\newcommand{\eps}{\ensuremath{\varepsilon}}
\newcommand{\inte}{\ensuremath{\mathrm{int}}}
\newcommand{\kreis}{\ensuremath{\mathbb{T}^{1}}}
\newcommand{\torus}{\ensuremath{\mathbb{T}^2}}
\newcommand{\homeo}{\ensuremath{\mathrm{Hom}}}
\newcommand{\norm}[1]{\ensuremath{\parallel \! #1 \! \parallel}}
\newcommand{\twomatrix}[4]{\ensuremath{\left(\begin{array}{cc} #1 & #2 \\ #3 &
      #4 \end{array}\right)}}
\newcommand{\twovector}[2]{\ensuremath{\left(\begin{array}{c} #1 \\
        #2 \end{array}\right)}}
\newcommand{\alphlist}{\begin{list}{(\alph{enumi})}{\usecounter{enumi}\setlength{\parsep}{2pt}
      \setlength{\itemsep}{1pt} \setlength{\topsep}{5pt}
      \setlength{\partopsep}{3pt}}}
\newcommand{\arablist}{\begin{list}{(\arabic{enumi})}{\usecounter{enumi}\setlength{\parsep}{2pt}
          \setlength{\itemsep}{1pt} \setlength{\topsep}{5pt}
          \setlength{\partopsep}{3pt}}}
\newcommand{\romanlist}{\begin{list}{(\roman{enumi})}{\usecounter{enumi}\setlength{\parsep}{2pt}
              \setlength{\itemsep}{1pt} \setlength{\topsep}{5pt}
              \setlength{\partopsep}{3pt}}}
\newcommand{\Romanlist}{\begin{list}{(\Roman{enumi})}{\usecounter{enumi}\setlength{\parsep}{2pt}
              \setlength{\itemsep}{1pt} \setlength{\topsep}{5pt}
              \setlength{\partopsep}{3pt}}}
\newcommand{\bulletlist}{\begin{list}{$\bullet$}{\setlength{\parsep}{2pt}
                \setlength{\itemsep}{1pt} \setlength{\topsep}{5pt}
                \setlength{\partopsep}{3pt}\setlength{\leftmargin}{15pt}}} 
\newcommand{\Alphlist}{\begin{list}{(\Alph{enumi})}{\usecounter{enumi}\setlength{\parsep}{2pt}
      \setlength{\itemsep}{1pt} \setlength{\topsep}{5pt}
      \setlength{\partopsep}{3pt}}}
 \newcommand{\listend}{\end{list}}
\newcommand{\T}{\ensuremath{\mathbb{T}}}
\newcommand{\N}{\ensuremath{\mathbb{N}}} 
\newcommand{\R}{\ensuremath{\mathbb{R}}}
\newcommand{\Z}{\ensuremath{\mathbb{Z}}}
\newcommand{\Q}{\ensuremath{\mathbb{Q}}}
\newcommand{\C}{\ensuremath{\mathbb{C}}}
\newcommand{\A}{\ensuremath{\mathbb{A}}}
\newcommand{\cA}{\mathcal{A}}
\newcommand{\cC}{\mathcal{C}}
\newcommand{\cE}{\mathcal{E}}
\newcommand{\cN}{\mathcal{N}}
\newcommand{\cQ}{\mathcal{Q}}
\newcommand{\cU}{\mathcal{U}}
\newcommand{\iLim}{\ensuremath{\lim_{i\rightarrow\infty}}}
\newcommand{\halb}{\ensuremath{\frac{1}{2}}}
\newcommand{\stit}{\medskip \noindent\textbf}
\newcommand{\id}{\textrm{id}}
\newcommand{\sm}{\setminus}
\newcommand{\bd}{\partial}
\newcommand{\floor}[1]{\lfloor #1 \rfloor}
\renewcommand{\norm}[1]{\left\| #1 \right\|}
\newcommand{\ham}{\ensuremath{\mathrm{Ham}}}
\newcommand{\hamlifts}{\ensuremath{\widehat{\mathrm{Ham}}}}
\newcommand{\Fab}{\ensuremath{F_{\alpha,\beta}}}
\newcommand{\ie}{i.e.\ }
\newtheorem{definition}{Definition}[section]
\newtheorem{thm}[definition]{Theorem}
\newtheorem{cor}[definition]{Corollary}  
\newtheorem{prop}[definition]{Proposition}
\newtheorem{conj}[definition]{Conjecture}
\newtheoremstyle{tobrem}{3pt}{3pt}{\normalfont}{0pt}{\bfseries}{.}{0.5em}{}
\theoremstyle{tobrem}
\newtheorem{rem}[definition]{Remark}
\numberwithin{equation}{section}
\begin{document}

%%%%% To ease editing, add:

\baselineskip=17pt

%%%%%%%%%%%%%%%%

%% In the running head, give an abbreviation of the title. 
\titlerunning{Onset of diffusion in the kicked Harper model}

\title{On the onset of diffusion\\ in the kicked
    Harper model}

\author{Tobias J\"ager \and Andres Koropecki \and Fabio Armando Tal}

\date{}

\maketitle

\address{T.~J\"ager: Friedrich Schiller University Jena, Institute of Mathematics; \email{tobias.jaeger@uni-jena.de} \and
A.~Koropecki: Universidade Federal Fluminense, Instituto de Matem\'atica e Estat\'\i stica; \email{ak@id.uff.br} \and
F.A.Tal: Universidade de S\~ao Paulo, Instituto de Matem\'atica e Estat\'\i stica, \email{fabiotal@ime.usp.br}
}

\subjclass{Primary 37E30, 37E45; Secondary 70H08}

%%%%%%%%

\setlength{\abovedisplayskip}{1.0ex}
\setlength{\abovedisplayshortskip}{0.8ex}

\setlength{\belowdisplayskip}{1.0ex}
\setlength{\belowdisplayshortskip}{0.8ex}

\maketitle

\abstract{We study a standard two-parameter family of area-preserving torus
  diffeomorphisms, known in theoretical physics as the {\em kicked Harper
    model}, by a combination of topological arguments and KAM theory. We
  concentrate on the structure of the parameter sets where the rotation set has
  empty and non-empty interior, respectively, and describe their qualitative
  properties and scaling behaviour both for small and large parameters. This
  confirms numerical observations about the onset of diffusion in the physics
  literature. As a byproduct, we obtain the continuity of the rotation set
  within the class of Hamiltonian torus homeomorphisms. }

\section{Introduction}

The {\em Kicked Harper Family} is a parameter family of torus diffeomorphisms $f_{\alpha,\beta}\colon \T^2\to \T^2$ induced by the maps
\begin{equation} \label{e.standardfamily}
  F_{\alpha,\beta} : \R^2 \to \R^2\ , \quad (x,y) \mapsto \left(x+\alpha
  \sin(2\pi(y+\beta \sin(2\pi x))), y+\beta \sin(2\pi x)\right) \ ,
\end{equation}
with parameters $\alpha,\beta\in\R$. It is the composition of a vertical and a
horizontal skew shift: if we let
\begin{eqnarray}
  V_\beta(x,y) & = & (x,y+\beta\sin(2\pi x)) \ ,\\ H_\alpha(x,y) & = &
  (x+\alpha\sin(2\pi y),y) \ ,
\end{eqnarray}
which induce the corresponding maps $v_\beta, h_\beta\colon \T^2\to \T^2$, then
\begin{equation}
  F_{\alpha,\beta} \ = \ H_\alpha\circ V_\beta,\quad  f_{\alpha,\beta} = h_\alpha \circ v_\beta\ . 
\end{equation}
Note that since the shear maps $H_\alpha$ and $V_\beta$ have Jacobian equal to $1$, these maps are area-preserving. This also allows to see the maps $f_{\alpha,\beta}$ are all {\em Hamiltonian
  torus diffeomorphisms}, by which we mean that they are homotopic to the
identity, preserve area and have zero Lebesgue rotation number. As
(\ref{e.standardfamily}) presents one of the simplest ways to produce explicit
examples of Hamiltonian torus diffeomorphisms, one can see it as a standard
family that may serve as a reference for the study of their dynamics and
rotational behaviour. Moreover, this model has been associated to a variety of
problems in theoretical physics, including the motion of magnetic field lines,
wave-particle interactions, dynamics of particle accelerators or laser-plasma
coupling, and both its classical and quantum dynamics have been studied with
computational methods by a variety of authors (see, for example
\cite{HowardHohs1984Reconnection,LeboeufKurchanFeingoldArovas1990PhaseSpaceLocalization,Leboeuf1998KickedHarper,ShinoharaAizawa1997ShearlessKAMBreakup,ShinoharaAizawa1998NontwistMapsTransitionToChaos,Shinohara2002DiffusionThreshold,Zaslavsky2005HamiltonianChaos,Zaslavsky2007PhysicsOfChaos}
and references therein). In the context of KAM theory (\ref{e.standardfamily})
provides a natural example of an area-preserving diffeomorphism of the torus that does not
satisfy a global twist condition. This fact gives rise to a number of
phenomena that have been studied, again in theoretical and computational
physics, under the names of {\em meandering KAM circles}, {\em separatrix
  reconnection} or the appearence of {\em twin chains}
(e.g.\ \cite{HowardHohs1984Reconnection,Leboeuf1998KickedHarper,ShinoharaAizawa1997ShearlessKAMBreakup,Shinohara2002DiffusionThreshold}). 
%\xxx{[proposal: eliminate this, including footnote:
%The
%existence and breakup of a {\em shearless KAM circle}\foot{A KAM circle that has
%  an extremal rotation number, so that no twist condition ({\em shear}) can be
%  satisfied in any of its neighbourhoods.}  has been related to the {\em onset
%  of diffusion and global chaos} in the kicked Harper map
%\cite{Shinohara2002DiffusionThreshold}.
%}

The purpose of this article is to study this model from the viewpoint of
rotation theory, which provides a rigorous framework for the description of
(some of) the above-mentioned phenomena.  The main topological invariant in this
theory is the {\em rotation set} of a torus homeomorphism $f:\torus\to\torus$,
homotopic to the identity and with lift $F:\R^2\to\R^2$, which has been
introduced by Misiurewicz and Ziemian in
\cite{MisiurewiczZiemian1989RotationSets} as
  \begin{equation}\label{e.rotset-def}
    \rho(F) \ = \ \left\{ \rho \in\R^2 \ \left| \ \exists n_i\nearrow
    \infty,\ z_i\in\R^2: \ \rho=\iLim
    \left(F^n_i(z_i)-z_i\right)/n_i\right\} \right. \ .
  \end{equation}
 It can be shown that the rotation set is compact and convex, and it usually carries dynamical information (see \S\ref{sec.prelim}). However, from the computational viewpoint, finding rotation sets is a delicate problem (see \cite{JaegerPadbergPolotzek2017RotationSets}).  
 As a basis for our further investigations, we first show the continuity of
  the map $(\alpha,\beta)\mapsto \rho(F_{\alpha,\beta})$. This follows from a
  general result on the continuous dependence of rotation set for Hamiltonian
  homeomorphisms (Theorem~\ref{t.continuity} below). The onset of diffusion and
global chaos in (\ref{e.standardfamily}) then corresponds to the appearence of
rotation sets with non-empty interior, and we aim at a better understanding of
this transition by studying the two complementary parameter regions with empty
and non-empty interior of the rotation set, as shown in
Figure~\ref{f.emptyint}. The analysis of these sets is simplified by the fact
that the maps \Fab\ have a number of symmetries, which directly translate into
symmetries of the rotation sets. In particular, the latter are invariant under
the reflexions along the horizontal and vertical axis (see
Section~\ref{StandardFamily}). Combined with the convexity of the rotation set,
this implies the following
\begin{prop} \label{p.rotset_shapes}
  For any $\alpha,\beta\in\R$, the rotation set $\rho(\Fab)$ is either 
  \begin{itemize}
  \item[(i)] reduced to $\{(0,0)\}$;
  \item[(ii)] a non-degenerate segment contained in the horizontal or vertical axis,
    with midpoint at the origin;
  \item[(iii)] a set with non-empty interior.
  \end{itemize}
\end{prop}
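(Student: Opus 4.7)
The plan is to combine three ingredients that are all available by the time this proposition is stated: (a) the rotation set $\rho(\Fab)$ is compact and convex, which is the fundamental result of Misiurewicz and Ziemian recalled in \eqref{e.rotset-def}; (b) the symmetries of $\Fab$ discussed in Section~\ref{StandardFamily} translate into invariance of $\rho(\Fab)$ under the horizontal reflection $\sigma_h(x,y)=(x,-y)$ and the vertical reflection $\sigma_v(x,y)=(-x,y)$; and (c) the elementary fact that a compact convex subset of $\R^2$ with empty interior is either a single point or a non-degenerate line segment.

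First I would split into cases according to the interior of $R:=\rho(\Fab)$. If $\inte(R)\neq\emptyset$, we are directly in case (iii), so assume $\inte(R)=\emptyset$. By (c), $R$ is either a single point or a non-degenerate segment. Note that $\sigma_h\circ\sigma_v$ is the point reflection $p\mapsto -p$, so the invariances from (b) imply $R=-R$; in particular the centroid of $R$ must be the origin.

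If $R$ reduces to a single point $p$, then $p=-p$ forces $p=(0,0)$, yielding (i). Otherwise $R$ is a non-degenerate segment centered at the origin, say $R=[-q,q]$ with $q=(a,b)\neq 0$. Applying $\sigma_h$ sends this segment to $[(-a,b),(a,-b)]$, which as a set must coincide with $[-q,q]$. Since the endpoints of a non-degenerate segment are determined up to order, this forces either $(a,b)=(a,-b)$, i.e.\ $b=0$ (so $R$ is a segment on the horizontal axis through the origin), or $(a,b)=(-a,b)$, i.e.\ $a=0$ (so $R$ is a segment on the vertical axis through the origin). Either possibility gives case (ii), completing the proof.

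There is no real obstacle: once the symmetry statement from Section~\ref{StandardFamily} is granted, the argument is essentially a two-line observation about convex sets in $\R^2$ symmetric under two orthogonal reflections. The only conceptual point worth double-checking is that the full symmetry group of $\Fab$ acts on $\R^2$ precisely by $\sigma_h$ and $\sigma_v$ (as opposed to, say, reflections through other lines), since this is what excludes additional diagonal segments as possible shapes of $R$; this verification is the substance deferred to Section~\ref{StandardFamily}.
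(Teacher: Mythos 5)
Your proof is correct, and it follows essentially the same strategy the paper has in mind (the paper handles this in a one-line remark after establishing the axis symmetries in Section~\ref{StandardFamily}, appealing to convexity, the axis symmetries, and the fact that the origin is a fixed point). The only cosmetic difference is how you pin down the origin as the midpoint: the paper invokes the fixed point $(0,0)\in\rho(\Fab)$ directly, while you derive the central symmetry $R=-R$ from composing the two reflections; both are legitimate and immediate given the symmetry statements.
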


Let
\begin{equation}
  \begin{split}
    \cE & = \ \left\{(\alpha,\beta)\in\R^2\mid \inte(\rho(F_{\alpha,\beta})) = \emptyset\right\},\\
    \cN & = \ \left\{(\alpha,\beta)\in\R^2\mid \inte(\rho(F_{\alpha,\beta}))\neq \emptyset\right\},
  \end{split}
\end{equation}
so that $\cN=\R^2\smin \cE$. The following result provides a theoretical basis
for the numerical approximations of these sets in Figure~\ref{f.emptyint}
(explained below) and thereby also provides a justification for similar
computations in \cite{Shinohara2002DiffusionThreshold}. By $|I|$, we denote the
length of an interval $I\ssq\R$. By $\pi_i:\R^2\to \R$ we denote the canonical
projection to the $i$-th coordinate, and we use the same notation for other
product spaces like $\torus$ or $\A=\kreis\times\R$.
\begin{prop}\label{p.diffusion_threshold}
  For $i=1,2$, we have $|\pi_i(\rho(F_{\alpha,\beta}))|>0$ if and only there
  exists some $z\in\R^2$ and $n\in\N$ such that
  $|\pi_i\left(F_{\alpha,\beta}^n(z)-z\right)| \geq 1$.
\end{prop}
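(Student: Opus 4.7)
The statement splits into two implications of rather different difficulty, and I address each in turn; let me fix $i=1$, the case $i=2$ being symmetric. The forward implication is a direct unpacking of the definition. Suppose $|\pi_1(\rho(\Fab))|>0$; since $\fab$ is Hamiltonian one has $0\in\rho(\Fab)$, and by the central symmetry of $\rho(\Fab)$ noted in the paper (coming from the relation $\Fab\circ\sigma=\sigma\circ\Fab$ with $\sigma=-\mathrm{id}$) together with convexity, $\pi_1(\rho(\Fab))$ is a non-degenerate interval symmetric about the origin. Pick $\rho\in\rho(\Fab)$ with $\pi_1(\rho)=s>0$. From~\eqref{e.rotset-def} there are sequences $n_k\nearrow\infty$ and $z_k\in\R^2$ with $(\Fab^{n_k}(z_k)-z_k)/n_k\to\rho$; for $k$ large, $\pi_1(\Fab^{n_k}(z_k)-z_k)\geq sn_k/2\geq 1$, as required.

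For the converse I argue the contrapositive, assuming $\pi_1(\rho(\Fab))=\{0\}$ and aiming to conclude $|\pi_1(\Fab^n(z)-z)|<1$ for all $z$ and $n$. Proposition~\ref{p.rotset_shapes} then forces $\rho(\Fab)\subset\{0\}\times\R$, and in particular every $\fab$-invariant Borel probability measure $\mu$ has rotation vector $\rho(\mu):=\int(\Fab-\mathrm{id})\,d\mu$ with vanishing first coordinate. The plan is to deduce from this the sharp uniform bound $|\pi_1(\Fab^n(z)-z)|<1$ by invoking a bounded-deviation result for area-preserving torus homeomorphisms whose rotation set is contained in a line through the origin. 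Such results arise in the topological rotation theory of area-preserving surface homeomorphisms (work of Le Calvez, Koropecki--Tal, Addas-Zanata, D\'avalos, J\"ager and others): in this situation one produces $\fab$-invariant essential loops in $\T^2$ freely homotopic to $\{0\}\times\kreis$ which separate $\T^2$ into annular regions of horizontal extent strictly less than $1$, and every orbit is confined to a single such region.

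The main obstacle will be obtaining the sharp constant $1$. A weaker, unquantified bound on the horizontal displacement follows relatively easily from the fact that all invariant measures have vanishing first rotation component, combined with elementary ergodic-theoretic considerations (via, for instance, the ergodic decomposition of Lebesgue measure and Birkhoff's theorem); but this would leave open the possibility of a displacement of magnitude $\geq 1$ along individual finite orbit segments. The strict inequality below $1$ demands the stronger topological conclusion that $\fab$-invariant essential vertical loops exist and trap every orbit inside a horizontal strip of width less than $1$. Producing these loops from the degenerate rotation set together with the Hamiltonian hypothesis (area-preservation and zero Lebesgue rotation number) is the most delicate ingredient, relying on equivariant-foliation or forcing techniques from the topological theory of surface homeomorphisms; once the loops are in hand, the confinement and hence the desired bound are immediate.
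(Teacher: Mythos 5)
The forward implication you give is correct and matches the obvious argument, but the converse is where all the content lies, and there your proposal is a plan, not a proof, and the plan has a gap you acknowledge but do not close.

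Your route invokes general bounded-deviation results for area-preserving torus homeomorphisms with rotation set contained in a line (D\'avalos, Guelman--Koropecki--Tal, etc.). These do give that displacements transverse to the line are uniformly bounded, but they give \emph{some} bound, not the sharp constant $1$ that the statement requires. You then assert that one can produce $\Fab$-invariant essential vertical loops ``which separate $\T^2$ into annular regions of horizontal extent strictly less than $1$,'' but nothing in the general theory controls the horizontal extent of an essential invariant circle in the class of $\{0\}\times\kreis$ --- such a circle can wind and wiggle so that its lift has arbitrarily large horizontal width. The paper itself flags exactly this: it cites the general boundedness results and then says ``the reversibility of the dynamics allows us to obtain a direct proof and an explicit bound,'' signalling that the sharp $1$ is model-specific, not a soft consequence of topological rotation theory.

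The paper's actual proof is elementary and does not use invariant circles at all. It exploits the reversibility involution $G=H_\alpha\circ S_2$ (respectively $G=S_1\circ V_\beta$ for $i=1$), which satisfies $G^2=\id$, $\Fab\circ G=G\circ\Fab^{-1}$, and fixes every point of $\R\times\{0\}$. Proposition~\ref{prop:axis-fb} deduces that if a point of $\R\times\{0\}$ reaches $\R\times\{k/2\}$ after $n$ iterates, then after $2n$ iterates it returns to its start translated by $(0,k)$, so $(0,k/(2n))\in\rho(\Fab)$. The corollary then argues: if some orbit segment has vertical displacement of absolute value at least $1$, normalize via the intermediate value theorem and the central symmetry so that the displacement is exactly $1$ with the base point in $\R\times(-1,0]$; a half-plane argument then shows that $\Fab^n(\R\times\{0\})$ (or $\Fab^n(\R\times\{-1/2\})$) must cross the appropriate horizontal line of half-integer height, and Proposition~\ref{prop:axis-fb} yields a nonzero vertical rotation number. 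This is a short, explicit, symmetry-driven argument that gives precisely the constant $1$ with no appeal to forcing theory or transverse foliations. You should replace the second half of your argument by this reversibility mechanism; as written, the converse direction is not established.
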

Due to the symmetries mentioned above, this immediately entails
\begin{cor}
  We have $\inte(\rho(F))\neq \emptyset$ if and only if there exist
  $z_1,z_2\in\R$ and $n_1, n_2\in \N$ such that  $|\pi_i\left(F_{\alpha,\beta}^{n_i}(z_i)-z_i\right)| \geq 1$
  for $i=1,2$.
\end{cor}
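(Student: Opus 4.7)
The plan is to reduce the Corollary to Proposition \ref{p.diffusion_threshold} using the symmetries of $F_{\alpha,\beta}$ recorded earlier and the shape classification in Proposition \ref{p.rotset_shapes}. The forward direction is essentially immediate: if $\inte(\rho(F_{\alpha,\beta}))\neq\emptyset$, then by Proposition \ref{p.rotset_shapes} we are in case (iii), so $\rho(F_{\alpha,\beta})$ is not contained in either coordinate axis. Since the projection of a convex set is an interval, both $\pi_1(\rho(F_{\alpha,\beta}))$ and $\pi_2(\rho(F_{\alpha,\beta}))$ must be non-degenerate, so $|\pi_i(\rho(F_{\alpha,\beta}))|>0$ for $i=1,2$. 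Applying Proposition \ref{p.diffusion_threshold} to each coordinate yields points $z_1,z_2\in\R^2$ and integers $n_1,n_2\in\N$ with $|\pi_i(F_{\alpha,\beta}^{n_i}(z_i)-z_i)|\geq 1$ as required.

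For the converse, I would start from the hypothesis and use Proposition \ref{p.diffusion_threshold} in the opposite direction to conclude $|\pi_1(\rho(F_{\alpha,\beta}))|>0$ and $|\pi_2(\rho(F_{\alpha,\beta}))|>0$. From each of these I extract witnesses: a point $(a,b)\in\rho(F_{\alpha,\beta})$ with $a\neq 0$, and a point $(c,d)\in\rho(F_{\alpha,\beta})$ with $d\neq 0$. Here the symmetries of \Fab\ enter: the rotation set is invariant under the reflections $(x,y)\mapsto(-x,y)$ and $(x,y)\mapsto(x,-y)$, so $(a,-b)$ and $(-c,d)$ also lie in $\rho(F_{\alpha,\beta})$.

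By convexity of the rotation set, the midpoints $(a,0)=\tfrac12\bigl((a,b)+(a,-b)\bigr)$ and $(0,d)=\tfrac12\bigl((c,d)+(-c,d)\bigr)$ both lie in $\rho(F_{\alpha,\beta})$, and applying the symmetries once more gives $(-a,0),(0,-d)\in\rho(F_{\alpha,\beta})$. Convexity then forces $\rho(F_{\alpha,\beta})$ to contain the (non-degenerate) rhombus with vertices $(\pm a,0),(0,\pm d)$, which has non-empty interior. Thus $(\alpha,\beta)\in\cN$.

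No step presents a serious obstacle; the only point requiring care is to be sure that Proposition \ref{p.diffusion_threshold} is an equivalence, so it may be applied in both directions, and that the symmetry statements from Section \ref{StandardFamily} indeed descend to the reflection invariance of $\rho(F_{\alpha,\beta})$ used above. Given these, the argument is a short symmetry-plus-convexity exercise combining the two preceding results.
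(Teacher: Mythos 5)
Your argument is correct and follows essentially the same route as the paper, which states the corollary as "immediate" from Proposition~\ref{p.diffusion_threshold} together with the reflection symmetries of $\rho(F_{\alpha,\beta})$; your explicit symmetry-plus-convexity rhombus construction is a faithful unpacking of that, and the forward direction is handled as expected. (One could equivalently cite Proposition~\ref{p.rotset_shapes} directly in the converse step, but since that proposition is itself derived from the same symmetries, the two phrasings are the same argument.)
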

Since this allows us to detect rotation sets with nonempty interior by
  considering the absolute displacement of orbits instead of asymptotic
  averages, this provides a simple numerical procedure to 
  approximate the sets $\cN$ and $\cE$.  In order to obtain
  Figure~\ref{f.emptyint}, for each parameter (pixel), a test point is chosen
  and iterated 4 million times. If the maximal observed displacement is greater
  than 1 in both directions, the pixel is painted white. The process is repeated
  for a large number of test points. The white region can thus be seen as a
  (lower) approximation of $\cN$. In the red region, which corresponds to $\cE$,
  the color scheme corresponds to the maximum of the observed displacements
  between 0 and 1.  
\smallskip

\begin{figure}[h]
\begin{center}
  \includegraphics[height = 8cm]{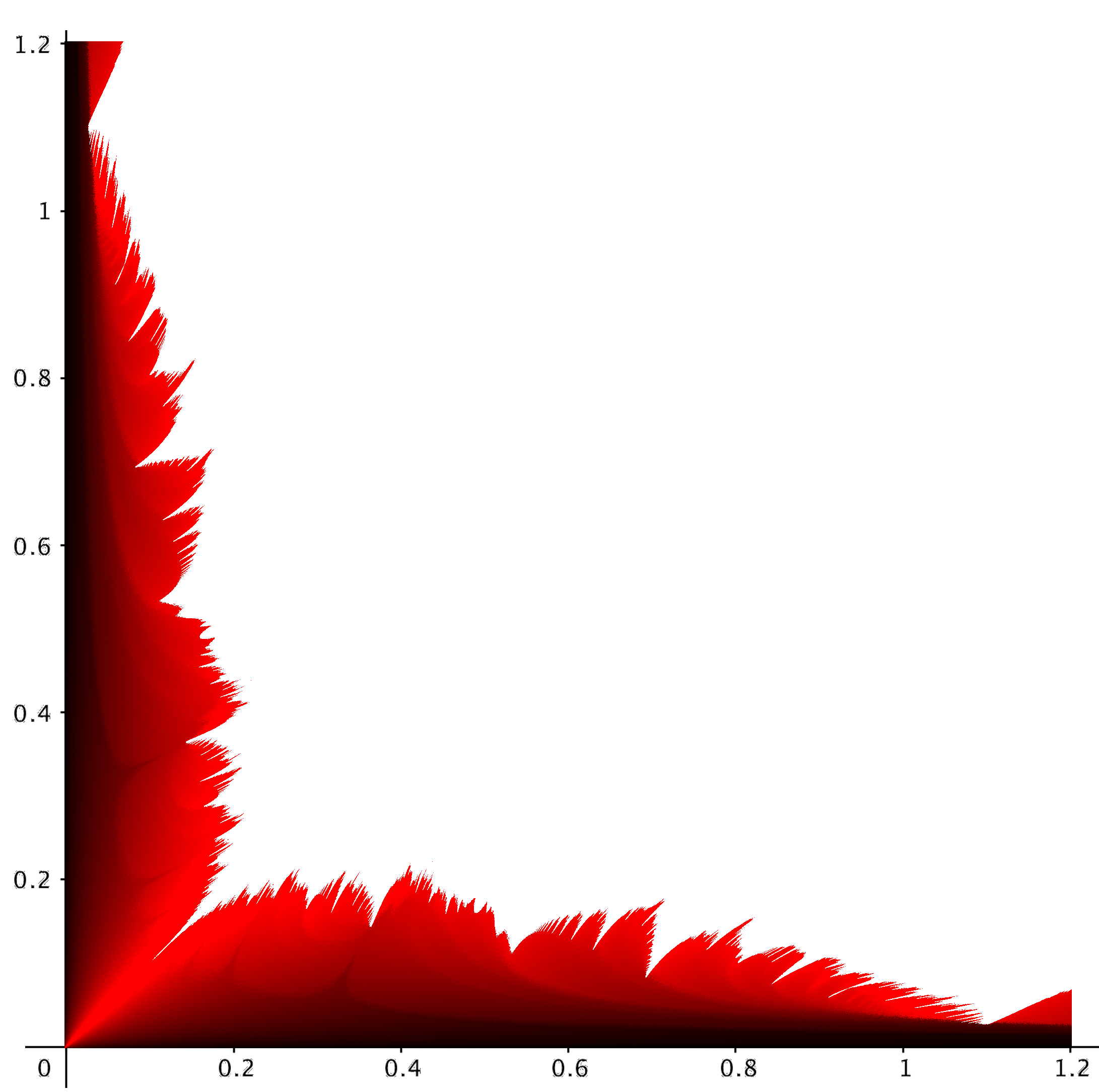}
  \caption{\label{f.emptyint} \small The parameter region $\cE$ on which the
    rotation set of $f_{\alpha,\beta}$ has empty interior is shown in red,
    whereas the white region corresponds to parameters with non-empty interior
    rotation set. The red colour scheme indicates the amount of vertical
    movement (below the diagonal) or horizontal movement (above the
    diagonal). Dark red corresponds to very small displacements, whereas light
    red indicates displacments close to the critical threshold of one.  }
\end{center}
\end{figure}

The fact that the rotation set has empty interior in a neighborhood of the
coordinate axes (removing the origin) is easily explained by the KAM phenomenon, \ie the persistence of certain invariant circles given by Theorem \ref{t.KAM}: small perturbations of integrable twist maps have some invariant circles (so called KAM circles) which are ``continuations'' of certain invariant circles of the twist map (those whose rotation numbers satisfy a given algebraic condition). Indeed, if one parameter of $f_{\alpha,\beta}$ is fixed (and nonzero) and the other is small enough, the dynamics in a neighborhood of one of the axes is a small perturbation of an integrable twist map, so that KAM circles persist and force the boundedness of
orbits in the transverse direction. Hence, for any $\alpha\neq 0$ there exists
$\beta_0>0$ such that $\{(\alpha, \beta) : |\beta|\leq \beta_0\}\subset \cE$. A
quantitative refinement of this statement will be given in
Theorem~\ref{t.scaling} below. In contrast, when both parameters are large, one
can guarantee the creation of \emph{rotational horseshoes}, leading to a
rotation set with nonempty interior. This entails the following

\begin{prop} \label{p.rotset_lowerbound} If $|\alpha|\geq 1/2$ and $|\beta|\geq 1/2$ then $(\alpha, \beta)\in \cN$.
\end{prop}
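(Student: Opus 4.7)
By the corollary to Proposition~\ref{p.diffusion_threshold}, it suffices to find, for each $i\in\{1,2\}$, a point $z_i\in\R^2$ and an integer $n_i\in\N$ with $|\pi_i(F_{\alpha,\beta}^{n_i}(z_i)-z_i)|\geq 1$. My plan is to exhibit two such orbit segments explicitly, both of length $n_i=2$, by choosing initial points at which two consecutive applications of the shear decomposition $F_{\alpha,\beta}=H_\alpha\circ V_\beta$ produce kicks in the desired coordinate direction that add constructively.

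For the vertical direction, I would take $z_2=(1/4,-\beta)$. Since $\sin(\pi/2)=1$, the vertical shear $V_\beta$ sends $z_2$ to $(1/4,0)$, and then $H_\alpha$ acts trivially because $\sin(2\pi\cdot 0)=0$. Applying $F_{\alpha,\beta}$ a second time reactivates the vertical kick (which is again $+\beta$) while producing some irrelevant horizontal displacement, so that
\[
F_{\alpha,\beta}^{2}(z_2)-z_2 \;=\; (\alpha\sin(2\pi\beta),\,2\beta),
\]
whose second coordinate has absolute value $2|\beta|\geq 1$.

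For the horizontal direction the initial point has to be adjusted to compensate for the intermediate vertical shear. I would take $z_1=(-\alpha,\,1/4+\beta\sin(2\pi\alpha))$, where the $y$-offset is chosen so that $V_\beta(z_1)=(-\alpha,1/4)$. Then $H_\alpha$ contributes a horizontal kick of $\alpha\sin(\pi/2)=\alpha$, giving $F_{\alpha,\beta}(z_1)=(0,1/4)$. The second iterate is clean because now $\sin(2\pi\cdot 0)=0$ makes $V_\beta$ trivial, while $H_\alpha$ again delivers $+\alpha$ horizontally, so
\[
F_{\alpha,\beta}^{2}(z_1)-z_1 \;=\; (2\alpha,\,-\beta\sin(2\pi\alpha)),
\]
whose first coordinate has absolute value $2|\alpha|\geq 1$. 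Combining both computations with the corollary to Proposition~\ref{p.diffusion_threshold} yields $\inte(\rho(F_{\alpha,\beta}))\neq\emptyset$, i.e.\ $(\alpha,\beta)\in\cN$.

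The only delicate point is the choice of $z_1$ and $z_2$: one must arrange the two shears so that their kicks add rather than cancel in the target coordinate, and the specific offsets $-\alpha,\ 1/4+\beta\sin(2\pi\alpha)$ (respectively $1/4,\ -\beta$) are chosen precisely so that after the first iterate one lands on a point where the next shear in the off-target direction is trivial. Once these points are identified the argument is purely algebraic, and the factor of $2$ picked up from two iterates matches the hypothesis $|\alpha|,|\beta|\geq 1/2$ exactly at the threshold — which also explains why this direct approach cannot be pushed below $1/2$ and why finer information on the boundary of $\cN$ needs the more delicate tools developed later in the paper.
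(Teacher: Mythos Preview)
Your proof is correct: both computations check out exactly as written, and together with the corollary to Proposition~\ref{p.diffusion_threshold} they give $(\alpha,\beta)\in\cN$.

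The paper's own argument is slightly different in spirit. Instead of invoking the displacement criterion, it produces genuine periodic points: for $\alpha\geq 1/2$ one picks $y$ with $s(y)=1/(2\alpha)$ and checks that $F_{\alpha,\beta}^2(0,y)=(1,y)$, so $(0,y)$ realises the rotation vector $(1/2,0)$; by the symmetries and convexity of the rotation set one then obtains the explicit inclusion
\[
\cQ=\{(x,y):|x|+|y|\leq 1/2\}\ \subset\ \rho(F_{\alpha,\beta}),
\]
which in particular has nonempty interior. Your route trades this explicit lower bound for a quicker conclusion via the diffusion-threshold corollary, and your orbit segments are not periodic (the off-target displacement is generally nonzero). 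Both arguments are equally elementary, but note that the paper later reuses the inclusion $\cQ\subset\rho(F_{\alpha,\beta})$ in the mode-locking discussion, so its version yields a bit more than the bare statement of the proposition.
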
 

The transition between the diffusive and the non-diffusive regime is a subtle
problem that is still poorly understood, even in the classical Chirikov-Taylor
standard family
\cite{chirikov1979UniversalInstability,ChirikovShepelyansky2008ScholarpediaChirikovMap}.
A non-trivial qualitative feature of the sets $\cE$ and $\cN$ that can be
observed in Figure~\ref{f.emptyint} is the fact that a thin cusp of $\cN$ seems
to extend along the diagonal towards the origin. This is confirmed by the
following results.
\begin{thm} \label{t.irrotational}
  We have $\rho(F_{\alpha,\beta})=\{(0,0)\}$ if and only if
  $(\alpha,\beta)=(0,0)$. 
\end{thm}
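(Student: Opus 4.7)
\medskip\noindent\textbf{Proof plan.}
The ``if'' direction is trivial: $F_{0,0}=\mathrm{Id}$ has rotation set $\{(0,0)\}$. For the converse, let $(\alpha,\beta)\neq(0,0)$; I show $\rho(F_{\alpha,\beta})\neq\{(0,0)\}$. The sign-change symmetries of the family, combined with the role-swap conjugacy by $\sigma(x,y)=(y,x)$ --- which satisfies $\sigma F_{\alpha,\beta}\sigma^{-1}=V_\alpha H_\beta$, a conjugate of $F_{\beta,\alpha}$, so that $\rho(F_{\beta,\alpha})$ equals the reflection of $\rho(F_{\alpha,\beta})$ across the diagonal --- allow me to assume $\alpha\geq\beta\geq 0$ with $\alpha>0$.

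If $\beta=0$: the map $F_{\alpha,0}(x,y)=(x+\alpha\sin(2\pi y),y)$ preserves the $y$-coordinate, and the lifted orbit $F_{\alpha,0}^n(0,1/4)=(n\alpha,1/4)$ yields $(\alpha,0)\in\rho(F_{\alpha,0})$.

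If $\beta>0$ (so $\alpha\geq\beta>0$), by Proposition \ref{p.diffusion_threshold} it suffices to find $z\in\R^2$ and $n\in\N$ with $|\pi_1(F_{\alpha,\beta}^n(z)-z)|\geq 1$. I consider the orbit of $z_0=(0,1/4)$; writing $F^n(z_0)=(x_n,y_n)$, the recursion
\[
y_{n+1}=y_n+\beta\sin(2\pi x_n), \qquad x_{n+1}=x_n+\alpha\sin(2\pi y_{n+1})
\]
gives $(x_1,y_1)=(\alpha,1/4)$. The strategy is a bootstrap: show that $|y_n-1/4|<1/8$ for all $n\leq N:=\lceil\sqrt 2/\alpha\rceil$, so that $\sin(2\pi y_n)\geq\sqrt 2/2$ and $x_N\geq N\alpha\sqrt 2/2\geq 1$. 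The control on $|y_n-1/4|$ comes from summing the first equation: writing $x_k=k\alpha+\eps_k$, the sum $\sum_{k=0}^{n-1}\sin(2\pi x_k)$ is bounded via the Dirichlet-kernel estimate $|\sum_{k=0}^{n-1}\sin(2\pi k\alpha)|\leq 1/|\sin(\pi\alpha)|$ plus a perturbation of order $\sum|\eps_k|$. The errors $\eps_k$ are themselves controlled by $|y_j-1/4|$, which closes the bootstrap whenever $\beta$ is sufficiently small relative to $\alpha$.

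The main obstacle is that the bootstrap only closes when $\beta$ is small enough relative to $\alpha$ (quantitatively, when $\beta$ is less than a constant times $\sin(\pi\alpha)$). The complementary regimes are handled as follows: the swap symmetry covers $\beta\gg\alpha$; Proposition \ref{p.rotset_lowerbound} covers $\min(\alpha,\beta)\geq 1/2$; and the remaining narrow band of comparable small parameters requires a separate argument. For resonant values of $\alpha$ (rational with small denominator) the Dirichlet bound degenerates but the finite-order periodic structure can be analysed directly; for the rest, either an orbit starting from a different base point, or an application of the continuity of $\rho$ (Theorem \ref{t.continuity}) combined with the density of ``good'' parameters where the bootstrap applies, propagates nontriviality to the remaining parameter values.
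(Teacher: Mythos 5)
The paper proves this theorem by a completely different route: in Section~\ref{StandardFamily} it shows that for $\alpha\beta\neq 0$ all four fixed points of $F_{\alpha,\beta}$ are elementary, observes that $F_{\alpha,\beta}$ extends to a biholomorphism of $\mathbb{C}^2$ so by Ushiki's theorem it admits no saddle connections between hyperbolic fixed points, and then invokes the general topological dichotomy of Theorem~\ref{t.irrotatoinal_hamiltonians}: an irrotational Hamiltonian lift with only elementary fixed points would be forced to have a saddle connection. Your bootstrap approach is genuinely different in spirit, but as written it does not close.

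The gap is concrete: your bootstrap only yields the conclusion when $\beta$ is small relative to (roughly) $\sin(\pi\alpha)$, and the complementary regime where $\alpha$ and $\beta$ are \emph{both small and of comparable size} --- exactly the parameters that produce the cusp of $\cN$ along the diagonal, and the most delicate part of the theorem --- is not covered by any of the devices you list. Proposition~\ref{p.rotset_lowerbound} only reaches $\min(\alpha,\beta)\geq 1/2$; the swap symmetry has already been spent in normalizing $\alpha\geq\beta$; and the appeal to ``continuity of $\rho$ combined with density of good parameters'' is logically fallacious. Theorem~\ref{t.continuity} gives Hausdorff continuity, but nothing prevents nontrivial rotation sets at a dense set of nearby parameters from shrinking to the singleton $\{(0,0)\}$ in the limit --- indeed this is precisely what happens as $(\alpha,\beta)\to(0,0)$, since $\rho(F_{0,0})=\{(0,0)\}$. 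So density of ``good'' parameters propagates nothing. The remark that ``the finite-order periodic structure can be analysed directly'' for resonant $\alpha$ is likewise not an argument. In short, the reduction you propose leaves open precisely the parameters for which the paper had to resort to Ushiki's theorem and the Hamiltonian dichotomy, and I do not see how to repair the continuity step.
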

Due to further symmetries of the rotation set on the diagonal explained in
Section~\ref{StandardFamily}, this directly implies
\begin{cor}\label{c.diagonal}
  We have $\inte(\rho(F_{\alpha,\alpha}))=\emptyset$ if and only if $\alpha=0$.
\end{cor}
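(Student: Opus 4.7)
The plan is to combine the diagonal symmetry of $\Faa$ (from Section~\ref{StandardFamily}) with the classification of rotation sets in Proposition~\ref{p.rotset_shapes} and then invoke Theorem~\ref{t.irrotational}. The key extra ingredient on the diagonal $\alpha=\beta$ is that conjugating $\Faa$ by the coordinate swap $\sigma(x,y)=(y,x)$ relates it to (a translate of) its inverse (or otherwise produces a symmetry exchanging the two factors), so that the rotation set is invariant under the reflection $(a,b)\mapsto (b,a)$ across the diagonal. Together with the two axis reflections that hold for every $\Fab$, this means $\rho(\Faa)$ is invariant under the dihedral group generated by these three reflections.

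With this in hand the argument is purely combinatorial on the three possibilities listed in Proposition~\ref{p.rotset_shapes}. First, suppose for contradiction that we are in case (ii), so $\rho(\Faa)$ is a non-degenerate segment contained in one of the coordinate axes and centered at the origin; say it is $[-c,c]\times\{0\}$ for some $c>0$ (the other case is symmetric). Applying the diagonal reflection $(a,b)\mapsto(b,a)$ shows that $\{0\}\times[-c,c]$ is also contained in $\rho(\Faa)$. By convexity, $\rho(\Faa)$ then contains the convex hull of these two segments, which is a square with nonempty interior, contradicting the assumption that we are in case (ii). Hence case (ii) cannot occur when $\alpha=\beta$, and $\rho(\Faa)$ is either $\{(0,0)\}$ or has nonempty interior.

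It follows that $\inte(\rho(\Faa))=\emptyset$ is equivalent to $\rho(\Faa)=\{(0,0)\}$, and by Theorem~\ref{t.irrotational} this occurs if and only if $\alpha=0$. The only step requiring care is the precise statement and verification of the diagonal symmetry of $\Faa$; this is not a geometric accident but should follow from the explicit form of the map, where on the diagonal the horizontal and vertical shears $H_\alpha$ and $V_\alpha$ become conjugate via $\sigma$, and the rest of the argument is then a one-line convexity check.
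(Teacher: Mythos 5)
Your proof is correct and takes essentially the same approach as the paper: the paper rules out a non-degenerate segment by observing that $\rho(F_{\alpha,\alpha})$ is invariant under the rotation $R$ by $\pi/2$ (derived in (\ref{eq:sym-R}), itself from $R=S_1D$), while you use the diagonal reflection $D$ together with convexity, which is an equivalent one-line argument. Both then reduce to Theorem~\ref{t.irrotational}.
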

Hence, the diagonal is indeed contained in $\cN$. Further, the following
statement confirms the cusp-like shape of $\cN$ near the origin.

\begin{thm} \label{t.pinching}
  Suppose that $\lambda \in [0,1)$. Then
  \[
     \alpha_0(\lambda) \ := \ \inf\{\alpha>0\mid (\alpha,\lambda\alpha)\in \cN\}
  \    = \inf\{\alpha>0\mid (\lambda\alpha,\alpha)\in \cN\} \  > \  0, \
  \] 
	and $\alpha_0$ is uniformly bounded away from zero on any compact subinterval of $[0,1)$.
\end{thm}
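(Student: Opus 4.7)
The plan is to establish positivity of $\alpha_0(\lambda)$ by producing, for all sufficiently small $\alpha>0$, an $F_{\alpha,\lambda\alpha}$-invariant essential horizontal circle on $\torus$ (a loop homotopic to the $x$-axis). Any such circle confines every orbit to a strip of bounded vertical extent in the universal cover, so the $y$-rotation number of every orbit vanishes; consequently $\pi_2(\rho(F_{\alpha,\lambda\alpha}))=\{0\}$ and, by Proposition~\ref{p.rotset_shapes}, the rotation set is a horizontal segment, giving $(\alpha,\lambda\alpha)\in\cE$ as required. The equality of the two infima follows from a coordinate-exchange symmetry discussed in Section~\ref{StandardFamily}: setting $\sigma(x,y)=(y,x)$ one has $V_\beta\sigma=\sigma H_\beta$, whence $F_{\alpha,\beta}$ is conjugate to $F_{\beta,\alpha}$ via $\sigma V_\alpha$; since $\sigma$ acts on $\pi_1(\torus)$ as the reflection across the diagonal, $\rho(F_{\beta,\alpha})$ is the diagonal reflection of $\rho(F_{\alpha,\beta})$ and hence $\cN$ is invariant under $(\alpha,\beta)\mapsto(\beta,\alpha)$.

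To produce the invariant circles we view $F_{\alpha,\lambda\alpha}$ as a near-integrable Hamiltonian map. A direct Taylor expansion gives
\[
 F_{\alpha,\lambda\alpha}(x,y)-(x,y) \ = \ \alpha\bigl(\sin(2\pi y),\, \lambda\sin(2\pi x)\bigr) + O(\alpha^2),
\]
which agrees to leading order with the time-$\alpha/(2\pi)$ map of the autonomous Hamiltonian $h_0(x,y) = \lambda\cos(2\pi x) - \cos(2\pi y)$ on $\torus$. Writing $F_{\alpha,\lambda\alpha} = H_\alpha\circ V_{\lambda\alpha} = \Phi_1^{h_H}\circ\Phi_1^{h_V}$ with $h_H = -\alpha\cos(2\pi y)/(2\pi)$ and $h_V = \lambda\alpha\cos(2\pi x)/(2\pi)$, a Baker--Campbell--Hausdorff / Lie splitting argument, convergent for $\alpha$ small, produces an analytic Hamiltonian $\tilde{C}_\alpha = h_0 + \alpha K_1 + O(\alpha^2)$ on $\torus$ with $F_{\alpha,\lambda\alpha} = \Phi_{\alpha/(2\pi)}^{\tilde{C}_\alpha}$. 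Thus the $F_{\alpha,\lambda\alpha}$-orbits are an $\alpha/(2\pi)$-sampling of the orbits of a Hamiltonian flow which, uniformly in small $\alpha$, is a genuine small perturbation of the integrable flow of $h_0$.

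For $\lambda\in[0,1)$ the Hamiltonian $h_0$ has a maximum at $(0,\tfrac12)$, a minimum at $(\tfrac12,0)$ and two saddles at $(0,0)$ and $(\tfrac12,\tfrac12)$ with distinct critical values $\lambda-1<1-\lambda$. Consequently $\cA_\lambda := h_0^{-1}\bigl((\lambda-1,\,1-\lambda)\bigr)\subset\torus$ is an essential annulus foliated by horizontal essential level curves of $h_0$, and the period function $c\mapsto T(c)$ of the $h_0$-flow on these curves is smooth and non-constant, giving non-degenerate twist away from an exceptional set of isolated levels. For $\lambda$ in a compact subinterval $[0,\lambda_0]\subset[0,1)$, the width of $\cA_\lambda$, the twist, and the analyticity data of $\tilde{C}_\alpha$ are all uniformly controlled; Theorem~\ref{t.KAM}, applied to $\tilde{C}_\alpha$ as a perturbation of $h_0$, therefore yields, for all $\alpha<\alpha^*(\lambda_0)$, an essential horizontal curve $\gamma_\alpha\subset\cA_\lambda$ invariant under the flow of $\tilde{C}_\alpha$, and hence under $F_{\alpha,\lambda\alpha}$. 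This produces the required barrier and establishes $\alpha_0(\lambda)\geq\alpha^*(\lambda_0)>0$ uniformly on $[0,\lambda_0]$.

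The principal obstacle is precisely this uniform control as $\lambda\nearrow 1$: the two saddle values of $h_0$ coincide in the limit, $\cA_\lambda$ collapses to the degenerate level $\{h_0=0\}$, and both the twist and the KAM smallness window degenerate. This is consistent with Corollary~\ref{c.diagonal}, which states that the full diagonal lies in $\cN$, and explains why the bound on $\alpha_0$ obtained by this scheme must depend on $\lambda_0$ and deteriorate as $\lambda_0\nearrow 1$.
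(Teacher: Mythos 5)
Your overall strategy is the same as the paper's: approximate $F_{\alpha,\lambda\alpha}$ for small $\alpha$ by the flow of the integrable Hamiltonian $h_0(x,y)=\lambda\cos(2\pi x)-\cos(2\pi y)$, identify a twist annulus between the homoclinic connections (for $\lambda<1$), and invoke KAM to get an essential horizontal invariant circle that confines the rotation set to the horizontal axis, with uniformity on compact subintervals of $[0,1)$ by compactness. Your symmetry argument for the equality of the two infima via $D(x,y)=(y,x)$ is correct and matches the paper's (\ref{eq:sym-diag-rot}).

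However, the mechanism you use to pass from the splitting $F_{\alpha,\lambda\alpha}=H_\alpha\circ V_{\lambda\alpha}$ to a flow has a genuine gap: the claim that a Baker--Campbell--Hausdorff / Lie-splitting series converges for small $\alpha$ and produces an \emph{exact} analytic autonomous Hamiltonian $\tilde{C}_\alpha$ with $F_{\alpha,\lambda\alpha}=\Phi^{\tilde{C}_\alpha}_{\alpha/(2\pi)}$ is false. For Hamiltonian vector fields (as opposed to bounded linear operators), the BCH series is only a formal power series; the modified Hamiltonian from backward error analysis is an asymptotic series, and after optimal truncation the flow matches $F_{\alpha,\lambda\alpha}$ only up to an exponentially small (but nonzero) error. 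A generic analytic exact symplectomorphism near the identity does not embed in any analytic autonomous Hamiltonian flow, so your $\tilde{C}_\alpha$ does not exist in the strong sense you need. There is a second, related issue: even if you accept an approximate flow, the time-$(\alpha/2\pi)$ map of $h_0$ has twist of order $\alpha$, which degenerates as $\alpha\to 0$, so the smallness threshold in Theorem~\ref{t.KAM} (condition (ii) with uniform constants) cannot be applied directly to $F_{\alpha,\lambda\alpha}$ or the short-time flow; one must first pass to an iterate of length $\sim 1/\alpha$. The paper's proof handles both points simultaneously: it views $F_{\alpha,\lambda\alpha}$ as one Euler step of size $\alpha$ for the vector field $W^{\lambda,\alpha}$, invokes a $\cC^\infty$-convergence estimate for Euler's method (Theorem~\ref{th:euler}) to show that $F_{\alpha,\lambda\alpha}^{n_\alpha}$ with $n_\alpha=\lfloor 1/\alpha\rfloor$ is $\cC^\infty$-close (not equal) to the time-$1$ flow $\Phi^{\lambda,0}_1$, and then applies Theorem~\ref{t.KAM} to $F^{n_\alpha}_{\alpha,\lambda\alpha}$ as a perturbation of the time-$1$ map, whose twist is non-degenerate and uniformly so for $\lambda$ in a compact subinterval of $[0,1)$. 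Your argument should be recast in this forward-error-analysis form; the KAM step and the conclusion then go through essentially as you outline.
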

 We remark that the middle equality in the statement of the theorem above is justified by the symmetries of the rotation set discussed in
Section~\ref{StandardFamily}.

Turning away from the vicinity of the origin, we then focus on large parameters
near the coordinate axes. Here, Figure~\ref{f.emptyint_along_axes} reveals both
a periodic structure combined with a decay in height of the region $\cE$ as the
parameter $\alpha$ tends to infinity. Both the periodicity and the scaling
behaviour are explained in \cite{Shinohara2002DiffusionThreshold} on a heuristic
level, by rescaling the maps $F_{\alpha,\beta}$ in a suitable neighbourhood of
the critical line $\R\times\{1/4\}$ and relating them to a quadratic
approximation, given by the {\em standard non-twist map} (see
\cite{ShinoharaAizawa1997ShearlessKAMBreakup,ShinoharaAizawa1998NontwistMapsTransitionToChaos}). As
the argument relies on some a~priori assumptions that are hard to verify, it is
difficult to convert it into a rigorous proof. However, we can at least use
these ideas to obtain analytic estimates for the scaling behaviour. Given
$\alpha\in\R$, we let
\begin{equation}\label{e.beta_pm}
  \beta^-(\alpha) \ = \ \inf\{\beta>0\mid (\alpha,\beta)\in\cN\} \eqand
  \beta^+(\alpha) \ = \ \sup\{\beta>0\mid (\alpha,\beta) \in\cE\} \ .
\end{equation}
Then we have
\begin{thm}\label{t.scaling}
  There exists constants $0<c<C$ such that
  \begin{equation}
    \frac{c}{\sqrt{\alpha}} \ \leq \  \beta^-(\alpha) \  \leq \ \beta^+(\alpha) \  \leq \ \frac{C}{\sqrt{\alpha}}
  \end{equation}
  for all $\alpha\geq 1$.
      \end{thm}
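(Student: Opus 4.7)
The strategy is to rescale $F_{\alpha,\beta}$ vertically near the critical circle $\R\times\{1/4\}$ of the unperturbed map $H_\alpha$, where the twist of $H_\alpha$ vanishes, and to reduce the problem to the dynamics of a concrete ``standard non-twist'' model. Setting $\eta=y-1/4$ and $\zeta=\eta/\beta$, one computes that in the variables $(x,\zeta)\in\kreis\times\R$ the map $F_{\alpha,\beta}$ takes the form
\begin{equation*}
\tilde G(x,\zeta) \ = \ \bigl(x+\alpha^*-2\alpha\sin^2(\pi\beta(\zeta+\sin(2\pi x))),\ \zeta+\sin(2\pi x)\bigr)\pmod{1\text{ in }x}\,,
\end{equation*}
where $\alpha^*=\alpha-\lfloor\alpha\rfloor\in[0,1)$. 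On bounded $\zeta$-ranges, $\tilde G$ is $C^r$-close (with error $O(\alpha\beta^4)$) to the ``standard non-twist'' model
\begin{equation*}
\tilde G_0(x,\zeta) \ = \ \bigl(x+\alpha^*-\mu(\zeta+\sin(2\pi x))^2,\ \zeta+\sin(2\pi x)\bigr),\qquad \mu \ := \ 2\pi^2\alpha\beta^2\,.
\end{equation*}
The dimensionless parameter $\mu$ controls the rotational behaviour of $F_{\alpha,\beta}$ near the critical line, and the theorem amounts to showing that the transition between the empty- and nonempty-interior regimes occurs at $\mu$ of order one, which translates to $\beta\sim 1/\sqrt\alpha$.

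For the upper bound $\beta^+(\alpha)\le C/\sqrt\alpha$, the plan is to show that for $\mu$ above some threshold $\mu_0$, $\tilde G_0$ contains a vertical rotational horseshoe. When $\mu$ is large the horizontal shift $\alpha^*-\mu(\zeta+\sin(2\pi x))^2$ varies by more than one unit as $\zeta+\sin(2\pi x)$ sweeps an interval of length $O(1/\sqrt\mu)$; this allows me to locate two disjoint topological rectangles $R_\pm$, on which $\sin(2\pi x)$ has constant opposite signs, whose images under $\tilde G_0$ Markov-cross each $R_\pm$ horizontally. A Moser/Conley-type topological horseshoe argument then produces rotational periodic orbits with prescribed vertical drift in $\zeta$; selecting an orbit with vertical displacement at least $1/2$ per iterate and unfolding the rescaling yields an orbit of $F_{\alpha,\beta}$ with $y$-displacement at least $1$ after roughly $2/\beta$ iterates. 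Since the $x$-displacement is automatically at least $\alpha\ge 1$ at $(x,y)=(0,1/4)$, the corollary to Proposition~\ref{p.diffusion_threshold} gives $(\alpha,\beta)\in\cN$. The error $\tilde G-\tilde G_0$ is $O(\alpha\beta^4)=O(\mu\beta^2)$, which is $o(1)$ as $\alpha\to\infty$ for $\mu$ bounded, so the horseshoe persists by structural stability once $C$ is chosen large enough.

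For the lower bound $\beta^-(\alpha)\ge c/\sqrt\alpha$, the plan is to produce invariant essential horizontal loops in $\torus$ that block vertical transport. For $\mu$ sufficiently small, an application of Theorem~\ref{t.KAM} in the non-twist setting to $\tilde G_0$ yields ``shearless'' invariant essential loops on $\kreis\times\R$ at $\zeta$-levels close to $0$, viewed as a perturbation of the integrable skew-product at $\mu=0$. Pushed back to the original coordinates these become invariant essential loops in $\torus$ close to $\R\times\{1/4\}$, and the symmetries of $F_{\alpha,\beta}$ provide analogous loops near $\R\times\{3/4\}$. These loops partition $\torus$ into invariant horizontal annuli, each of $y$-diameter less than one, forcing every orbit to satisfy $|\pi_2(F_{\alpha,\beta}^n(z)-z)|<1$ for all $n$ and $z$. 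By Proposition~\ref{p.diffusion_threshold} we then have $|\pi_2(\rho(F_{\alpha,\beta}))|=0$, hence $(\alpha,\beta)\in\cE$.

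The principal obstacle lies in the KAM step of the lower bound: the twist coefficient of $\tilde G_0$ vanishes at $\zeta+\sin(2\pi x)=0$, so classical KAM does not apply along the shearless curve, and one must invoke a non-twist persistence result with quantitative smallness thresholds that are uniform in $\alpha$. The rescaling errors $\tilde G-\tilde G_0$, of order $\alpha\beta^4$ on bounded $\zeta$-ranges, have to be tracked so that they remain within the KAM tolerance, which ultimately pins down the constant $c$. The upper-bound horseshoe construction is comparatively more flexible (any rotational horseshoe suffices), but the same error estimate must be controlled on the rectangles $R_\pm$, and this determines the constant $C$.
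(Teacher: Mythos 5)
Your scaling heuristic is the right one, and you correctly identify the relevant reduced model (the standard non-twist map, rescaled near the shearless circle $y=1/4$), but both halves of the argument have genuine gaps which the paper avoids by taking a different route.

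For the lower bound, your rescaling $\zeta=(y-1/4)/\beta$ normalizes the vertical shear to $O(1)$ but leaves the twist proportional to $\mu=2\pi^2\alpha\beta^2$, which is precisely the quantity that must be taken \emph{small}. In that regime the twist degenerates, the $\mu=0$ limit $(x,\zeta)\mapsto(x+\alpha^*,\zeta+\sin 2\pi x)$ is a parabolic skew-shift (not an integrable twist map, and with unbounded orbits for rational $\alpha^*$), and Theorem~\ref{t.KAM} simply cannot be invoked -- it explicitly requires condition (ii), the non-vanishing twist, which you are trying to do without. You flag this as ``the principal obstacle'' and defer it to an unspecified non-twist KAM result; but the paper shows the obstacle is avoidable. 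Proposition~\ref{p.standard_nontwist_rescaling} uses the orthogonal rescaling $w=\sqrt{\kappa\alpha}\,(y-1/4)$, which normalizes the \emph{twist} to $O(1)$ on a fixed strip $w\in[\eta,1]$ bounded away from the shearless circle. There, $S_{\alpha_0,0}$ restricted to $\T^1\times[\eta,1]$ is an honest integrable twist map, Theorem~\ref{t.KAM} applies with constants uniform in $\alpha_0$ (by compactness of $\alpha_0\bmod 1$), and the perturbation induced by $V_\beta$ has rescaled amplitude $\sqrt{\kappa\alpha}\,\beta$, whose smallness gives exactly $\beta<\delta/\sqrt{\kappa\alpha}$. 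No shearless curve and no non-twist theory are needed: any horizontal KAM circle anywhere blocks vertical transport.

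For the upper bound, the horseshoe in your rescaled model lives on a bounded $\zeta$-range, but to trigger Proposition~\ref{p.diffusion_threshold} you need an orbit with $y$-displacement at least one, i.e.\ $\zeta$-displacement of order $1/\beta$. Along such an orbit the quantity $\beta(\zeta+\sin 2\pi x)$ reaches order one, and the quadratic approximation $\sin^2(\pi\beta u)\approx(\pi\beta u)^2$ -- and hence the very model $\tilde G_0$ and its error bound $O(\alpha\beta^4)$, valid only ``on bounded $\zeta$-ranges'' -- breaks down before the orbit has travelled far enough. You do not explain how to control the orbit over the full excursion. The paper sidesteps rescaling entirely: Proposition~\ref{pr:largeparametersgeneral} is a global, non-perturbative shearing argument. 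It tracks an inductively constructed sequence of arcs $\alpha_n$ of unit horizontal width confined to horizontal strips of height $\delta\le\beta/2$, shows that the vertical shear $V_\psi$ lifts part of the arc by $\beta$ while fixing another part, and that the condition $\mathrm{Var}_\varphi(\delta)\ge 2$ lets the horizontal shear $H_\varphi$ restore unit horizontal width after cutting to a strip of height $\delta$; this forces the arcs to rise by $\beta-\delta$ per iterate for all $n$, giving $\pi_2(\rho(F_{\varphi,\psi}))\ge\beta-\delta$. Combined with the elementary estimate $\mathrm{Var}_s(\delta)\ge\pi\delta^2$, this yields $\beta^+(\alpha)\le\sqrt{8/(\pi\alpha)}$ with no reference to any local normal form.
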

\medskip

\begin{figure}[h]
\begin{center}
  \includegraphics[width=\linewidth]{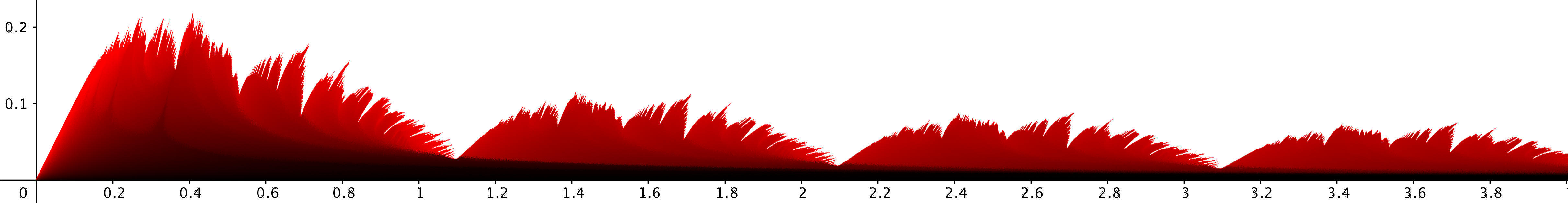}
  \caption{\label{f.emptyint_along_axes} \small The picture shows the part of
    the parameter set $\cE$ that lies below the diagonal, with $\alpha$ between
    0 and 4. It reveals a seemingly periodic structure, combined with a decay in
    the height of the region when $\alpha$ becomes large. }
\end{center}
\end{figure}

The paper is organised as follows. In Section~\ref{StandardFamily}, we collect a
number of basic facts about the kicked Harper map, including a description of
its symmetries, the local analysis of canonical fixed points and a proof of
Proposition~\ref{p.rotset_lowerbound}. Section~\ref{DiffusionThreshold} deals
with the diffusion threshold provided by
Proposition~\ref{p.diffusion_threshold}.  The continuous dependence of the
rotation set of Hamiltonian torus diffeomorphisms is proved in
Section~\ref{RotationSetContinuity}. Section~\ref{Cusp} provides the proofs of
Theorems~\ref{t.irrotational} and \ref{t.pinching} and their corollaries,
describing the cusp of $\cN$ along the diagonal. The proof of
Theorem~\ref{t.scaling} about the scaling behaviour for large parameters is then
given in Section \ref{LargeParameters}.
%\textcolor{blue}{
We conclude with Section~\ref{questionsandremarks}, presenting some additional remarks on the family as well as a few interesting questions for further work.
%}
\bigskip

\noindent{\bf Acknowledgments.} We would like to thank the anonymous referees for the many suggestions and corrections that helped improve this paper. Most of this research was carried out while AK
and FT spent a year at the Friedrich Schiller University of Jena as visiting
professors. The stay of AK was supported by a Mercator Fellowship of the German
Research Council \mbox{(DFG-grant OE 538/9-1)}. The stay of FT was made possible
by a Willhelm Friedrich Bessel Award of the Alexander von Humboldt
Foundation. TJ acknowlegdes support by a Heisenberg grant of the German Research
Council (DFG-grant OE 538/4-1).

\section{Preliminaries}\label{sec.prelim}
We denote by $\homeo(X)$ the space of homeomorphisms of a topological space $X$. The torus $\T^2$ is regarded as the quotient map $\R^2/\Z^2$ and we denote by $\pi\colon \R^2\to \T^2$ the projection, which is a universal covering map. We also denote by $\widehat{\homeo}(\T^2)$ the space of all maps $F\colon \R^2\to \R^2$ which are lifts of some element of $\homeo(\T^2)$. This corresponds to all homeomorphisms of the form $A + \Delta$ where $A\in \mathrm{GL}(2,\Z)$ and $\Delta$ is $\Z^2$-periodic. We remark that if $f\in \homeo(\T^2)$ is the map lifted by $F$, then the element $[F]\in  \mathrm{GL}(2,\Z)$ such that $F-[F]$ is $\Z^2$-periodic depends only on $f$ and not on the choice of the lift, so we may also denote it by $[f]$. If one identifies the first homology group of $\T^2$ with $\Z^2$, then $[f]$ represents the homomorphism induced in first homology by $f$. 

When $[f] = \id$ one has that $f$ is homotopic to the identity. We denote the space of homeomorphisms homotopic to the identity by $\homeo_0(\T^2)$, and its corresponding lifts by $\widehat \homeo_0(\T^2)$.  

\subsection{Rotation sets and vectors}
Fix $f\in \homeo_0(\T^2)$ and a lift $F$ of $f$. 
The map $\hat \Delta_F := F-\id$ is $\Z^2$-periodic and therefore induces a continuous map $\Delta_F \colon \T^2\to \R^2$. One may easily verify that $\Delta_{F^n}(z) = \sum_{k=0}^{n-1} \Delta_F(f^k(x))$. 
Recall the definition of the rotation set $\rho(F)$ from  (\ref{e.rotset-def}). Equivalently, $\rho(F)$ is the set of all limits of sequences of the form
\begin{equation}\label{e.rotset_birkhoff}
\frac{1}{n_i} \Delta_{F^{n_i}}(z_i) = \frac{1}{n_i}\sum_{k=0}^{n_i-1} \Delta_F(f^k(z_i)),
\end{equation}
where $n_i\to \infty$ and $z_i\in \T^2$.

 Let us state some general properties of rotation sets (we refer the reader to \cite{MisiurewiczZiemian1989RotationSets} for details):
\begin{prop}\label{p.rotset}
	The following properties hold: %as an easy consequence of the definition (compare \cite[Lemma 1]{kwapisz:1992}\xxx{see Lemma 2.4 KK}), one has
\begin{itemize}
	\item[(1)] For all $v\in \Z^2$ and $n\in \Z$, one has $\rho(F^n+v) = n\rho(F)+v$;
	\item[(2)] For any $H\in \homeo(\T^2)$, one has $\rho(HFH^{-1}) = [H]\rho(F)$.
	\item[(3)] The rotation set is compact and convex.
\end{itemize}
\end{prop}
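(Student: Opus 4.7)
The plan is to prove the three items in order, noting that (1) and (2) are essentially formal computations, while the real content lies in (3), and within (3) convexity is by far the hardest step.

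For (1), the key observation is that any $F \in \widehat{\homeo}_0(\T^2)$ commutes with integer translations: $F(z+v) = F(z)+v$ for $v \in \Z^2$. From this a quick induction yields $(F^n + v)^m(z) = F^{nm}(z) + mv$, so that
\[
\frac{(F^n+v)^m(z)-z}{m} \ = \ n\cdot \frac{F^{nm}(z)-z}{nm} + v.
\]
Taking limits along sequences $m_i \to \infty$ and $z_i \in \R^2$ gives $\rho(F^n+v) = n\rho(F)+v$. For (2), fix a lift (still denoted $H$) and decompose $H = [H]\cdot \id + \phi$ with $\phi$ being $\Z^2$-periodic, and hence uniformly bounded. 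Then $(HFH^{-1})^n = HF^n H^{-1}$, and setting $w = H^{-1}(z)$ we find
\[
(HFH^{-1})^n(z)-z \ = \ H(F^n(w))-H(w) \ = \ [H]\bigl(F^n(w)-w\bigr) + \bigl(\phi(F^n(w))-\phi(w)\bigr).
\]
The last term is uniformly bounded, so dividing by $n$ and passing to the limit shows that the rotation vectors of $HFH^{-1}$ and of $F$ are related exactly by the linear map $[H]$, giving $\rho(HFH^{-1})=[H]\rho(F)$. A short check using $G(w+u)=G(w)+u$ for $u\in\Z^2$ confirms that this is independent of the choice of lift of $H$.

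For (3), compactness splits into boundedness and closedness. Boundedness is immediate from the fact that $\Delta_F$ is continuous on the compact space $\T^2$, hence uniformly bounded by some $M$, so that $\norm{(F^n(z)-z)/n} \leq M$ for every $z$ and $n$. Every candidate rotation vector therefore lies in the closed ball of radius $M$ around the origin. Closedness follows from a standard diagonal argument: if $v_k \in \rho(F)$ and $v_k \to v$, pick for each $k$ a pair $(n_k, z_k)$ with $\norm{(F^{n_k}(z_k)-z_k)/n_k - v_k} < 1/k$ and $n_k \geq k$, which exhibits $v$ as a limit of the required form.

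Convexity is the substantial step, and I expect it to be the main obstacle. The plan is to follow the Misiurewicz--Ziemian strategy and first show that any rational convex combination $\lambda v_1 + (1-\lambda)v_2$ of two elements $v_1, v_2 \in \rho(F)$ lies in $\rho(F)$; closedness of $\rho(F)$ then upgrades this to arbitrary convex combinations. The idea is to pick long orbit segments $F^{n_i}(x_i)-x_i \approx n_i v_1$ and $F^{m_j}(y_j)-y_j \approx m_j v_2$, and then to concatenate many repetitions of pieces of these into one long orbit segment whose displacement has average close to $\lambda v_1 + (1-\lambda)v_2$. Since genuine concatenation of distinct orbits is not possible, the argument must instead compare the displacements along a single orbit visiting suitable regions of $\T^2$: one exploits the fact that $\Delta_{F^n}(z)$ is $\Z^2$-periodic in $z$ and that $\Delta_{F^{n+m}} = \Delta_{F^n} + \Delta_{F^m}\circ f^n$ to stitch together appropriately many copies of each rational length $n_i, m_j$. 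The key technical point is controlling the bounded transition cost when switching between ``modes''; since this cost is $O(1)$ per switch and the segments have length of order $n$, the error washes out in the Birkhoff-type average (\ref{e.rotset_birkhoff}).
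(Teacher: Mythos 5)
The paper gives no argument for Proposition~\ref{p.rotset}; it records the facts and refers to \cite{MisiurewiczZiemian1989RotationSets}, so there is no in-text proof to compare against. Your treatment of (1), (2) and the compactness half of (3) is essentially correct. Two small remarks on (1): in the inclusion $n\rho(F)+v\subseteq\rho(F^n+v)$ you implicitly assume that every rotation vector of $F$ is attained along times which are multiples of $n$; this needs the (easy) observation that replacing $k_i$ by $n\lfloor k_i/n\rfloor$ alters the displacement by at most $n\cdot\sup\norm{\Delta_F}$, which is negligible after dividing by $k_i$. For $n<0$ one should also note that limits over backward times coincide with $\rho(F)$, via the substitution $w=F^{nm}(z)$. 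Neither remark threatens the result, and (2) and compactness are fine as written.

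The convexity sketch, however, contains a genuine gap. You propose to stitch together long orbit segments whose averages are close to $v_1$ and to $v_2$, asserting that the ``transition cost'' of switching regimes is $O(1)$ per switch and hence washes out. But a general homeomorphism of $\T^2$ has no shadowing, specification, or recurrence property that would let you follow a piece of one orbit and then continue along a piece of another at bounded cost; nor is there any reason a single orbit should visit the required regions of phase space in the required proportions. The Misiurewicz--Ziemian argument does not concatenate orbits at all. It considers the sets $S_n=\{\Delta_{F^n}(z)/n : z\in\T^2\}$, which are compact and, crucially, \emph{connected} (continuous images of $\T^2$), and uses the cocycle identity $\Delta_{F^{n+m}}(z)=\Delta_{F^n}(z)+\Delta_{F^m}(f^n(z))$ only to obtain the Minkowski subadditivity
\[
S_{n+m}\ \subseteq\ \frac{n}{n+m}\,S_n\ +\ \frac{m}{n+m}\,S_m\ .
\]
Convexity of $\rho(F)$, which is the topological $\limsup$ of the sets $S_n$, then follows from an abstract lemma about sequences of compact connected subsets of $\R^2$ satisfying such a relation. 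The connectedness of the $S_n$ is precisely what replaces the impossible orbit-concatenation step, and it is the ingredient your sketch omits; without it, the transition-cost heuristic does not go through.
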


In general $\rho(F)$ does not depend continuously on $F\in \widehat\homeo_0(\T^2)$, however:
\begin{prop}[\cite{MisiurewiczZiemian1989RotationSets}]\label{p.rotset_semi}
	The map $\rho:F\mapsto \rho(F)$ is upper semicontinuous in the Hausdorff topology.
\end{prop}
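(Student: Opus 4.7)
The plan is to argue by contradiction using a compactness argument on empirical measures. For a compact-valued map whose image sets are contained in a common bounded region, upper semicontinuity is equivalent to the following closed-graph property: if $F_k\to F$ in $\widehat{\homeo}_0(\T^2)$ and $w_k\in\rho(F_k)$ with $w_k\to w$, then $w\in\rho(F)$. Since $\Delta_{F_k}\to\Delta_F$ uniformly on $\T^2$, the displacements $\|\Delta_{F_k}\|_\infty$ are uniformly bounded in $k$, so all sets $\rho(F_k)$ for large $k$ lie in a common compact subset of $\R^2$; this makes the closed-graph reformulation legitimate and ensures the desired subsequential extraction.

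For each $k$, using the Birkhoff-average characterisation (\ref{e.rotset_birkhoff}), I would choose $n_k\nearrow\infty$ and $z_k\in\R^2$ such that $|\Delta_{F_k^{n_k}}(z_k)/n_k-w_k|<1/k$, so that $\Delta_{F_k^{n_k}}(z_k)/n_k\to w$. I then consider the empirical probability measures
\[
  \mu_k \ = \ \frac{1}{n_k}\sum_{j=0}^{n_k-1}\delta_{f_k^j(\pi(z_k))}
\]
on $\T^2$ and extract, via Banach--Alaoglu, a weak-$*$ limit $\mu_k\to\mu$ along a subsequence. The first key step is to show that $\mu$ is $f$-invariant. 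For a test function $\phi\in C(\T^2)$, uniform convergence $f_k\to f$ combined with uniform continuity of $\phi$ gives $|\int\phi\circ f\,d\mu_k-\int\phi\circ f_k\,d\mu_k|\to 0$, while a telescoping computation yields $|\int\phi\circ f_k\,d\mu_k-\int\phi\,d\mu_k|=|\phi(f_k^{n_k}(\pi(z_k)))-\phi(\pi(z_k))|/n_k\to 0$. Passing to the limit then gives $\int\phi\circ f\,d\mu=\int\phi\,d\mu$.

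A parallel computation with $\Delta_F$ in place of $\phi$, relying on the uniform convergence $\Delta_{F_k}\to\Delta_F$, shows $\int\Delta_F\,d\mu=\lim\int\Delta_{F_k}\,d\mu_k=\lim\Delta_{F_k^{n_k}}(z_k)/n_k=w$. To conclude, I would apply Birkhoff's ergodic theorem to the $f$-invariant measure $\mu$: for $\mu$-almost every $z$, the averages $\Delta_{F^n}(z)/n$ converge to some vector $\rho_\mu(z)$, which lies in $\rho(F)$ by definition (\ref{e.rotset-def}), and $\int\rho_\mu\,d\mu=\int\Delta_F\,d\mu=w$. Since $\rho(F)$ is closed and convex by Proposition~\ref{p.rotset}(3), the barycenter $w$ must belong to $\rho(F)$, contradicting the assumption $w\notin\rho(F)$.

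The main subtle point is the interaction between the two limits $k\to\infty$ and $n_k\to\infty$: the naive idea of approximating $F_k^{n_k}$ by $F^{n_k}$ fails because the $C^0$ error compounds under iteration. Routing the argument through empirical measures circumvents this difficulty, because the convergence $F_k\to F$ enters only through integrals of a single continuous function against the measures $\mu_k$, where the $1/n_k$ averaging tames any error without amplification.
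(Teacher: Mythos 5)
The paper does not prove this proposition; it is cited directly from Misiurewicz--Ziemian \cite{MisiurewiczZiemian1989RotationSets}, so there is no in-paper proof to compare against. Your argument, however, is correct and complete, and it is worth saying a bit about how it differs from the classical one. The argument in \cite{MisiurewiczZiemian1989RotationSets} is combinatorial rather than ergodic: it works with the finite-time rotation sets $\rho_n(F)=\{\Delta_{F^n}(z)/n : z\in\T^2\}$, proves a uniform approximation lemma (for every $\eps>0$ there is $N$ such that $\rho_n(F)\subset\rho(F)+B_\eps$ for all $n\geq N$), and then exploits the fact that for each fixed $n$ the map $F\mapsto\rho_n(F)$ is continuous in the Hausdorff topology; upper semicontinuity follows from these two facts. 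Your route instead pushes all the limit-interchange difficulty into a single weak-$*$ compactness argument: the telescoping identity $\int\phi\circ f_k\,d\mu_k-\int\phi\,d\mu_k=\bigl(\phi(f_k^{n_k}(\pi(z_k)))-\phi(\pi(z_k))\bigr)/n_k$ gives invariance of any limit $\mu$, and then $w=\int\Delta_F\,d\mu=\rho_\mu(F)\in\rho(F)$ by the mean-rotation-vector property (which relies only on compactness and convexity of $\rho(F)$, so there is no circularity). What your approach buys is exactly what you point out at the end: it completely sidesteps the compounding of the $\cC^0$ error under iteration, since $F_k\to F$ only ever enters through integration of a single fixed continuous function against $\mu_k$. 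The price is an appeal to Birkhoff's theorem and Krylov--Bogolyubov-style machinery where the original is purely elementary. Two cosmetic remarks: (1) you announce a proof by contradiction, but in fact you never use the assumption $w\notin\rho(F)$ -- the argument is a direct proof of the closed-graph property, so the framing should be dropped; (2) in the step bounding $\int\Delta_F\,d\mu-\int\Delta_{F_k}\,d\mu_k$, it helps the reader to split it explicitly as $\bigl(\int\Delta_F\,d\mu-\int\Delta_F\,d\mu_k\bigr)+\bigl(\int\Delta_F\,d\mu_k-\int\Delta_{F_k}\,d\mu_k\bigr)$, the first term vanishing by weak-$*$ convergence and the second by uniform convergence of $\Delta_{F_k}$; as written the two mechanisms are run together.
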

As a consequence, if $\rho(F)$ is a singleton then $\rho$ is continuous at $F$. We also have:
\begin{prop}[\cite{misiurewicz/ziemian:1991}]\label{p.rotset_cont_int}
	 If $\rho(F)$ has nonempty interior, then the function $\rho$ is continuous at $F$.
\end{prop}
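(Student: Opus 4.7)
The plan is to combine Proposition~\ref{p.rotset_semi} (upper semi-continuity) with a matching lower semi-continuity argument that exploits the interior hypothesis. Since upper semi-continuity is automatic, the task reduces to proving lower semi-continuity at $F$: given $\varepsilon>0$, find $\delta>0$ such that whenever $G\in\wh\homeo_0(\T^2)$ satisfies $\supnorm{G-F}<\delta$, every $v\in\rho(F)$ lies within distance $\varepsilon$ of $\rho(G)$.

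The starting point is the density of rational rotation vectors that are realised by periodic orbits. By a theorem of Franks, every $p/q\in\inte(\rho(F))\cap\Q^2$ is attained by a periodic orbit, \ie there exists $z\in\R^2$ with $F^q(z)=z+p$. Convexity of $\rho(F)$ together with $\inte(\rho(F))\neq\emptyset$ ensures that such rational vectors are dense in $\rho(F)$, so it is enough to show that for each such $p/q$, one has $p/q\in\rho(G)$ whenever $G$ is sufficiently $C^0$-close to $F$.

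For the persistence step, set $\tilde F:=F^q-p$ and $\tilde G:=G^q-p$. By Proposition~\ref{p.rotset}(1) the condition $p/q\in\inte(\rho(F))$ translates to $0\in\inte(\rho(\tilde F))$. From this one exhibits a bounded topological disk $D\ssq\R^2$ on whose boundary the displacement $\tilde F-\id$ has non-zero winding number around the origin --- equivalently, one produces a finite family of finite-time displacements of $\tilde F$ whose convex hull strictly encloses $0$. This winding condition is $C^0$-open, and since $\tilde F-\id$ is $\Z^2$-periodic the required closeness of maps need only be controlled on a compact fundamental domain. Hence $\tilde G-\id$ inherits the same winding property on $\bd D$ for $G$ close enough to $F$, and a Brouwer/degree argument produces a fixed point of $\tilde G$ in $D$, corresponding to a periodic orbit of $G$ with rotation vector $p/q$.

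The main obstacle is the persistence step. The interior hypothesis is essential here: it is precisely what converts the asymptotic statement $p/q\in\rho(F)$ into a finite, open topological condition (non-zero winding of the displacement about $0$) that survives small $C^0$ perturbations. For boundary rotation vectors no such open condition is available, which is exactly why $\rho$ fails to be continuous in general and only upper semi-continuity is guaranteed by Proposition~\ref{p.rotset_semi}.
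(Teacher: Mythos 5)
The paper does not prove Proposition~\ref{p.rotset_cont_int} directly: it is cited to \cite{misiurewicz/ziemian:1991}, and a self-contained alternative is only sketched in the Remark at the end of Section~\ref{RotationSetContinuity}. That alternative proceeds by the surgery argument of Proposition~\ref{pr:maincontinuity}: choose three rational vectors $w_1,w_2,w_3$ realised by periodic points $z_1,z_2,z_3$ whose convex hull contains $v$ in its interior, modify a nearby $G$ to $G'$ by small local bumps supported near these orbits so that $G'$ again realises the $w_i$, invoke Franks' realisation theorem for $G'$ to get a periodic point $z$ with rotation vector $v$, and finally check that $z$ avoids the perturbation supports so that $G$ itself realises $v$. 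Your opening reduction (upper semicontinuity from Proposition~\ref{p.rotset_semi} plus persistence of a dense set of rational rotation vectors) is the right skeleton and matches the paper's framing.

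The gap is in the persistence step. You assert that $0\in\interior(\rho(\tilde F))$ yields a bounded disk $D$ on whose boundary $\tilde F-\id$ has non-zero winding about the origin, and call this ``equivalent'' to producing finitely many finite-time displacements whose convex hull strictly encloses $0$. These are not equivalent. The convex-hull condition is immediate from the definition of the rotation set, but it says nothing about a winding number: one can have displacement values surrounding the origin on scattered pieces of $\R^2$ while every fixed point of $\tilde F$ has index zero. Indeed, since $\tilde F-\id$ is $\Z^2$-periodic it descends to a vector field on $\T^2$, and Poincar\'e--Hopf forces the total index of its zeros to vanish ($\chi(\T^2)=0$); the winding of $\tilde F-\id$ around the boundary of a fundamental domain is therefore automatically zero, and extracting a sub-disk with non-zero winding requires an actual argument (essentially, that some Nielsen class of fixed points is essential). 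Franks' realisation theorem for interior points gives you a fixed point of $\tilde F$ but no index control, so the degree/Brouwer conclusion does not follow from what you have written. Until the non-zero winding claim is established---which is precisely the hard content here---the rest of the argument cannot run. This is exactly the obstruction the paper's surgery approach is designed to sidestep: instead of looking for a topologically stable fixed point of $\tilde F$, it \emph{manufactures} one for a conjugate $G'$ of $G$ and then argues it is also a fixed point of $\tilde G$.
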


The rotation vector of a point $z\in \R^2$ is
$$\rho(F,z) = \lim_{n\to \infty} \frac{F^n(z)-z}{n},$$ if this limit exists, and since $F^n(z)-z$ is $\Z^2$-periodic we may define the rotation vector of $z\in \T^2$ as $\rho(F, z) = \rho(F, z')$ where $z'\in \pi^{-1}(z)$, if it exists. Note that in this case $\rho(F,z)\in \rho(F)$. In general not every $v\in \rho(F)$ is the rotation vector of some point (as there are examples where this fails to hold when $\rho(F)$ has empty interior), but this is true if $v$ is either an interior point or an extremal point of $\rho(F)$ (see \cite{misiurewicz/ziemian:1991}).

The sequences in (\ref{e.rotset_birkhoff}) are Birkhoff averages for the map $\Delta_F$. From Birkhoff's Ergodic Theorem, if $\mu$ is an $f$-invariant Borel probability measure, one sees that the $\rho(F,z)$ exists for $\mu$-almost every $z\in \T^2$ and $$\int \rho(F,z)d\mu(z) = \int \Delta_F(z) d\mu(z).$$
The number $\rho_\mu(F) = \int \Delta_F d\mu$ is called the \emph{mean rotation vector} of $F$ for the measure $\mu$, and due to the convexity of the rotation set one always has $\rho_\mu(F) \in \rho(F)$. 
If $\mu$ is ergodic, the Ergodic Theorem also guarantees that $\rho(F,z) = \rho_\mu(F)$ for $\mu$-almost every $z$. Moreover, it is known that every element of $\rho(F)$ that is either extremal or interior is the mean rotation vector of some ergodic probability \cite{misiurewicz/ziemian:1991}.

Every periodic point of $f$ has a well-defined rotation vector, which belongs to $\mathbb{Q}^2$. In fact, a periodic point of $f$ of period $q$ lifts to a point $z\in \R^2$ such that $F^q(z) = z+v$ for some $v\in \Z^2$, and one has $\rho(F,z) = v/q$. The converse of this observation is partially true:
\begin{prop}[Realization by periodic points]\label{p.reali}
	If $w = v/q \in \rho(F)$ with $v\in\Z^2$ and $q\in \N$, then there exists $z\in \R^2$ such that $F^q(z) = z+v$, provided that one of the following properties holds:
	\begin{enumerate}
		\item[(1)] $w$ is an extremal point of $\rho(F)$ \cite{franks:1988a};
		\item[(2)] $w$ is an interior point of $\rho(F)$ \cite{franks:1989};
		\item[(3)] $F$ is area-preserving and $\rho(F)$ is an interval \cite{franks:1995};
		\item[(4)] $F$ is area-preserving and $w$ belongs to the convex hull of $\rho(F)\cup B_\epsilon(\rho_\lambda(F))$ for every $\epsilon>0$, where $\rho_\lambda(F)$ denotes the mean rotation vector associated to Lebesgue measure. \cite[Prop. 2.1]{franks:1995}. 
	\end{enumerate}	
\end{prop}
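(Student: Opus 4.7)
The statement is really a compilation of four classical realization results, all due to Franks, and my plan is to reduce each part to the corresponding theorem in the cited papers rather than reprove them from scratch. In all four cases, after replacing $F$ by $F^q$ and subtracting the integer vector $v$, we are asking whether $0\in\rho(G)$ is realized by a fixed point of $G:=F^q-v$, and the hypotheses of Proposition~\ref{p.rotset}(1) imply that the required transformation of the rotation set is valid.

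For part (1), once reduced to the case $w=0$, the claim is that an extremal rational rotation vector must be realized. I would invoke Franks' Brouwer-type realization result: if $G$ had no fixed point, one could use planar Brouwer theory together with chain recurrence on $\T^2$ to produce arbitrarily long pseudo-orbits whose net displacement lies in an open half-plane disjoint from a supporting line to $\rho(G)$ at $0$, contradicting extremality. For part (2), the point $0$ is now interior to $\rho(G)$, and Franks' later index-theoretic argument (which exploits the fact that interior rotation vectors are ``surrounded'' by other rotation vectors coming from recurrent orbits) gives the fixed point directly by a Lefschetz-type count. I would just check that the hypotheses of those theorems translate literally to the setting here.

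For parts (3) and (4) the area-preserving structure is essential. I would appeal to Proposition~2.1 of \cite{franks:1995}, which already states the convex-hull criterion used in (4); the only work is to verify that $\rho_\lambda(F)$, which lies in $\rho(F)$ automatically because Lebesgue measure is $f$-invariant and its mean rotation vector lies in the rotation set by convexity (see Proposition~\ref{p.rotset}(3) and the discussion following \eqref{e.rotset_birkhoff}), can play the role required by Franks. Part (3) then follows from (4) because when $\rho(F)$ is a segment containing $\rho_\lambda(F)$, any rational $w$ on the segment automatically lies in the convex hull of $\rho(F)\cup B_\eps(\rho_\lambda(F))$ for every $\eps>0$, so the hypothesis of (4) is satisfied.

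The main obstacle I foresee is not conceptual but cosmetic: the four sources use slightly different conventions for lifts, for the definition of rotation set, and for the meaning of ``realized by a periodic orbit''. The careful step is therefore to confirm that in each case the cited theorem produces exactly a point $z\in\R^2$ with $F^q(z)=z+v$, rather than, say, a periodic orbit of some larger period with the same rotation vector. Once this bookkeeping is done, no new dynamical argument is needed, which is why the proof in the paper is expected to consist essentially of the four reductions sketched above together with citations.
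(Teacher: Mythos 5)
The paper offers no proof of this proposition: it sits in the preliminaries section and is given purely as a compilation of Franks' realization theorems, each part coming with its own citation. So your decision to argue by reduction to the cited results rather than reproving them is exactly what the paper intends, and the normalization $G=F^q-v$, $w\mapsto 0$ via Proposition~\ref{p.rotset}(1) is the standard (and correct) way to phrase that reduction.

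The one genuine issue is in how you derive (3) from (4). You observe that any $w\in\rho(F)$ automatically lies in $\conv\bigl(\rho(F)\cup B_\eps(\rho_\lambda(F))\bigr)$, which is true as the statement is literally printed. But that literal reading makes (4) assert that \emph{every} rational point of the rotation set of an area-preserving lift is realized, which is far stronger than Franks' Proposition~2.1 and would render (1)--(3) redundant. The intended hypothesis is almost certainly that $w$ lies in the \emph{interior} of that convex hull; this is how (4) is actually invoked later in the paper (Proposition~\ref{pr:maincontinuity} applies it to a point $v$ strictly between $0=\rho_\lambda(G')$ and $w$, precisely the interior situation). Under the interior reading, (3) does not reduce to (4): when $\rho(F)$ is a segment, its two endpoints lie on the boundary of $\conv\bigl(\rho(F)\cup B_\eps(\rho_\lambda(F))\bigr)$ for small $\eps$ and so fail the hypothesis of (4). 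Those endpoints are instead covered by (1) (they are extremal), and the non-extremal interior points of the segment are the ones handled by (4). So either (3) needs its own citation to Franks, or you should deduce it by combining (1) and (4), not (4) alone. Apart from that, the bookkeeping concern you raise at the end (that the cited theorems deliver a genuine fixed point of $F^q-v$, not merely an orbit with matching asymptotic rotation vector) is the right thing to check and is indeed satisfied by Franks' statements.
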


Further, it is known that when $\rho(F)$ has nonempty interior, $f$ has positive topological entropy \cite{llibre/mackay:1991}.

\begin{rem}
For the particular case of the Kicked Harper model, the second item of Proposition \ref{p.reali} together with the symmetries of the problem explored in Subsection \ref{sec:sym} show that,  if $\rho(F_{\alpha,\beta})$ has nonempty interior, then for every direction in $\Z^2$ one can find a periodic orbit of  $f_{\alpha,\beta}$ that lifts to a point moving with linear speed in that direction. This is the reason we refer to this situation as ``diffusive''. 
\end{rem}

\subsection{Area-preserving and Hamiltonian homeomorphisms}
	Let $\lambda$ denote the Lebesgue measure in $\R^2$, which induces a measure (also denoted $\lambda$) on $\T^2$. 
	We denote by $\homeo_0^{\textrm{ap}}(\T^2)$ the space of area-preserving elements of $\homeo_0(\T^2)$, which are those preserving the Lebesgue measure. If $F$ is a lift of $f\in \homeo_0^{\textrm{ap}}(\T^2)$, denote the mean rotation vector for $\lambda$ by $\rho_\lambda(F)$.
% can be computed as
%	$$\rho_\lambda(F) = \int_{[0,1)^2} F(z)-z \,d\lambda(z).$$
	
	If $\rho_\lambda(F) =(0,0)$ for some lift $F$ of $f$, then $f$ is called a \emph{Hamiltonian homeomorphism}, and the lift $F$ is its Hamiltonian lift. 	We denote by $\ham(\T^2)$ the space of Hamiltonian diffeomorphisms of $\T^2$ and $\widehat{\ham}(\T^2)$ the space of the respective Hamiltonian lifts. 
	The reason for this nomenclature comes from the analogous notion for diffeomorphisms: a Hamiltonian diffeomorphism is one which is the time-$1$ map of the flow associated to a time-dependent Hamiltonian vector field. Note that area-preserving diffeomorphisms of $\T^2$ are the same as symplectic diffeomorphisms. It is known that the symplectic diffeomorphisms of $\T^2$ homotopic to the identity which are Hamiltonian are precisely those with a lift whose mean rotation vector is $(0,0)$ (see \cite{franks-handel,oh2015symplectic}); thus by extension one says that a element of $\homeo_0^{\textrm{ap}}(\T^2)$ is Hamiltonian if it has a lift with mean rotation vector $(0,0)$.
	
	Note that we have
	\[
	\ham(\torus) \  \ssq
	\ {\mathrm{Homeo}}_0^{\mathrm{ap}}(\torus) \ \ssq \ {\mathrm{Homeo}}_0(\torus) \ ,
	\]
	Moreover, Proposition \ref{p.reali} guarantees that a Hamiltonian lift always has a fixed point. We will need a more detailed result about Hamiltonian lifts, which can be found in \cite[Theorem 70]{LeCalvezTal2015ForcingTheory}:
	\begin{thm}\label{t.hamiltonian_LCT}
		Suppose the fixed point set of $f\in \ham(\T^2)$ is contained in a topological disk, and let $F$ be its Hamiltonian lift. Then one of the following holds:
		\begin{itemize}
			\item[(1)] $\rho(F)$ has nonempty interior,  and the origin lies in its interior;
			\item[(2)] $\rho(F) =  \{tu\mid a\leq t \leq b\}$, where $u\in\Z^2$ and $a < 0 < b$;
			\item[(3)] $\rho(F)=\{(0,0)\}$, and $\sup\{\|F^n(z)-z\| : n\in \Z, z\in \R^2\} < \infty$. 
		\end{itemize}
	\end{thm}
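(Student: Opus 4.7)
The plan is to use Le Calvez's theory of equivariant transverse foliations combined with the forcing theory of \cite{LeCalvezTal2015ForcingTheory}. The starting observation is that the Hamiltonian lift $F$ satisfies $\rho_\lambda(F)=(0,0)$, so by convexity of the rotation set we have $(0,0)\in\rho(F)$. The task then reduces to locating $(0,0)$ inside $\rho(F)$ and, in the trivial case, upgrading this to a uniform bound on displacements.

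First I would fix a maximal identity isotopy $I=(f_t)_{t\in[0,1]}$ from $\mathrm{id}_{\T^2}$ to $f$ whose fixed-point set coincides with $\mathrm{Fix}(f)$, together with its lift $\widetilde I$ from $\mathrm{id}_{\R^2}$ to $F$. Since $\mathrm{Fix}(f)$ is contained in a topological disk, the complement $U=\T^2\smin\mathrm{Fix}(I)$ is open and essential in $\T^2$. By Le Calvez's theorem there exists a foliation $\cF$ of $U$ transverse to the isotopy, whose lift to $\R^2$ is a $\Z^2$-equivariant foliation $\widetilde\cF$. The fundamental principle is that any admissible path in $\widetilde\cF$ (a path positively transverse to $\widetilde\cF$ that is realized by an orbit segment of $F$) can be concatenated with its $\Z^2$-translates via the forcing theory to produce periodic orbits of $f$ whose rotation vectors are prescribed by the homology class of the concatenated path.

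The trichotomy is then obtained from the set of admissible homological translations that can be realized. If there exist admissible paths with two linearly independent integer translations, one gets periodic orbits with two such rotation vectors, and together with a fixed point in the disk (which exists because $(0,0)\in\rho(F)$ is realized by a fixed point of $F$) this produces a rotation set with nonempty interior containing the origin strictly inside, giving case (1). If admissible translations span only a one-dimensional rational subspace, one obtains periodic orbits on both sides of the origin along a rational line through $0$, giving case (2) with $u\in\Z^2$ and $a<0<b$ by Proposition~\ref{p.reali}(1) applied to the two extremal endpoints. If no admissible path produces a nonzero integer translation, we are in case (3).

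The main obstacle is case (3): deducing the uniform bound $\sup_{n,z}\|F^n(z)-z\|<\infty$ from $\rho(F)=\{(0,0)\}$. Here the disk hypothesis is crucial, as it confines the singular set of $\widetilde\cF$ to a single $\Z^2$-orbit of a compact piece, so that far from this singular set the foliation is qualitatively very regular. If some orbit of $F$ were unbounded, it would cross arbitrarily many disjoint integer translates of leaves of $\widetilde\cF$, and an equivariant concatenation argument in $\widetilde\cF$ would then yield an admissible path with nonzero integer translation, hence a periodic orbit with nontrivial rotation vector, contradicting $\rho(F)=\{(0,0)\}$. Making this concatenation argument work quantitatively (producing not merely a nonzero rotation vector but a uniform bound on displacements) is the technical heart of the proof and relies on the full strength of the forcing theory in \cite{LeCalvezTal2015ForcingTheory}.
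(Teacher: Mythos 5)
The paper does not prove this theorem; it is explicitly quoted as Theorem~70 of \cite{LeCalvezTal2015ForcingTheory}, so there is no ``paper's own proof'' against which to compare your sketch. Your outline does follow the general framework of that reference (maximal identity isotopy, Le Calvez's equivariant transverse foliation, forcing theory to produce periodic orbits realizing prescribed rotation vectors), so in that sense you are reconstructing the right machinery. But as a proof it has real gaps. First, a maximal isotopy has $\mathrm{Fix}(I)\subseteq\mathrm{Fix}(f)$; you cannot in general arrange $\mathrm{Fix}(I)=\mathrm{Fix}(f)$, and the argument must be carried out with the (possibly strictly smaller) set $\mathrm{Fix}(I)$, which is what makes the hypothesis that $\mathrm{Fix}(f)$ lies in a disk (and hence so does $\mathrm{Fix}(I)$) useful. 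Second, the step where ``admissible paths with two linearly independent integer translations'' yields a rotation set with the origin in its interior is glossed over; you need the Hamiltonian hypothesis (zero Lebesgue mean rotation vector) to rule out, say, $\rho(F)$ being a polygon with $(0,0)$ as a vertex, and this is not addressed. Third, and most importantly, the uniform boundedness in case~(3) is exactly the technical core of Le~Calvez--Tal's Theorem~70; your last paragraph names the obstacle and the intended mechanism (an unbounded orbit would cross many translates of leaves and force an admissible path with nonzero integer translation), but this is a statement of what must be proved, not a proof. That implication is precisely what an entire section of forcing theory is built to establish, and as written your argument is circular in the sense that it assumes the quantitative output of the forcing machinery in order to conclude it. In short, the route is right, but nothing in your sketch goes beyond the level of ``this follows from \cite{LeCalvezTal2015ForcingTheory}'', which is exactly what the paper itself says.
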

	When $\rho(F) = \{(0,0)\}$, we say that $F$ is irrotational. Thus, part (3) says that an irrotational Hamiltonian lift has uniformly bounded displacements.
	Finally, we need the following simple result (see, for instance, \cite[Proposition 6.1]{KoropeckiTal2012StrictlyToral})
	\begin{prop}\label{p.fix-essential}
	 	Suppose the fixed point set of $f\in \homeo_0^{\textrm{ap}}(\T^2)$  is not contained in a topological disk. Then the rotation set of a lift of $f$ is either a line segment with rational slope or a singleton.
	\end{prop}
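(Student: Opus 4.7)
The plan is to exploit the hypothesis to produce an essential $f$-invariant continuum consisting of fixed points, to use its lift to identify a preferred rational direction $u\in\Z^2$, and then to show that the rotation set of the naturally associated lift must be contained in the line $\R u$.

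First I would extract a compact connected $f$-invariant set $K\ssq\mathrm{Fix}(f)$ such that some connected component $\tilde K$ of $\pi^{-1}(K)$ is unbounded. In the typical situation $K$ is simply a connected component of $\mathrm{Fix}(f)$ which is not contained in a topological disk; if every individual component is inessential but their union is not contained in a disk, a short planar-topology argument (e.g.\ filling inessential complementary regions of $\mathrm{Fix}(f)$) still produces such a $K$. The stabiliser $\Gamma=\{v\in\Z^2 : \tilde K+v=\tilde K\}$ is then a non-trivial subgroup of $\Z^2$. Pick a primitive $u\in\Gamma$ and choose the lift $F$ of $f$ that fixes some $z_0\in\tilde K$: since $F$ permutes the connected components of $\pi^{-1}(K)$, fixes $\tilde K$ setwise by choice of $z_0$, and its restriction to $\tilde K$ covers $f|_K=\mathrm{id}$, we obtain $F|_{\tilde K}=\mathrm{id}$, and in particular $(0,0)\in\rho(F)$.

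The main assertion is now $\rho(F)\ssq\R u$, which immediately implies the proposition, since $\rho(F)$ is a compact convex subset of a line of rational slope and is therefore either $\{(0,0)\}$ or a non-degenerate segment of rational slope. To establish the inclusion I would work in the annular cover $\A_u=\R^2/\Z u$, where $F$ induces an area-preserving homeomorphism $\bar F$ that fixes pointwise the image $\bar K$ of $\tilde K$. Because any path in $\tilde K$ from $z_0$ to $z_0+u$ projects to a loop generating $\pi_1(\A_u)$, the set $\bar K$ is compact, connected, and essential in the cylinder $\A_u$. Consequently $\bar F$ preserves each connected component of $\A_u\sm\bar K$, and the goal is to deduce from this invariance combined with area preservation that orbits of $\bar F$ cannot drift to the ends of the cylinder — equivalently, that the component of every rotation vector of $F$ orthogonal to $u$ vanishes.

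The main obstacle I expect is this last step. In the simple case where the complementary components of $\bar K$ are pre-compact in $\A_u$ the argument is immediate, since orbits are confined to bounded horizontal strips; in general the components reaching the ends of the cylinder must be controlled through the area-preserving property of $\bar F$, for example via Poincar\'e recurrence on the further quotient $\A_u/\Z=\T^2$, or through realisation of extremal rotation vectors by compactly supported $\bar F$-invariant measures on $\A_u$ whose mean drift in the height direction must then vanish by area preservation. Finally, in the degenerate case where $\Gamma$ has rank two the set $\tilde K$ is doubly periodic in $\R^2$, forcing $F$ to be the identity on a set projecting onto $\T^2$ modulo a finite index sublattice, and one concludes directly that $\rho(F)=\{(0,0)\}$.
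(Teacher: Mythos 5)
The paper does not actually prove Proposition~\ref{p.fix-essential}: it cites it from \cite[Prop.~6.1]{KoropeckiTal2012StrictlyToral}, so there is no in-paper argument to compare against. Your outline is in the right spirit — produce an essential fixed continuum, extract a rational direction $u$ from the stabiliser of its lift, pass to the annular cover $\A_u=\R^2/\Z u$, and argue that orbits cannot drift transversely to $u$ — but two of its steps are genuine gaps rather than details that ``fill in easily.''

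First, the reduction to a \emph{connected} $K\ssq\mathrm{Fix}(f)$ with unbounded lift does not follow from the hypothesis in the way you suggest. The hypothesis is only that $\mathrm{Fix}(f)$ is not contained in a single disk; this is \emph{a priori} weaker than having an essential connected component. Your proposed fix — ``filling inessential complementary regions of $\mathrm{Fix}(f)$'' — produces a closed $f$-\emph{invariant} set, but that set is strictly larger than $\mathrm{Fix}(f)$ and is \emph{not fixed pointwise}, so one can no longer conclude $F|_{\tilde K}=\mathrm{id}$, and the whole argument that the stabiliser singles out a direction $u$ and that $(0,0)\in\rho(F)$ breaks down. One must either establish, with a real planar-topology lemma, that a closed set not contained in a disk has an essential connected component, or (as Koropecki--Tal do) work instead with the complement $\T^2\sm\mathrm{Fix}(f)$ and a notion of fully essential versus merely essential sets, which sidesteps the connectedness of $\mathrm{Fix}(f)$ altogether.

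Second, the step you flag as the ``main obstacle'' is indeed the heart of the matter and is not resolved by the devices you mention. Poincar\'e recurrence on $\T^2$ gives nothing about displacement in the lift, and the measure-theoretic route is circular: an ergodic measure realising an extremal rotation vector lifts to a $T$-invariant (hence infinite) measure on $\A_u$, and you would need to \emph{already know} there is no transverse drift before you can cut down to a finite compactly supported invariant measure. The argument that actually closes this gap is topological: all translates $T^n\bar K$ are compact essential continua fixed pointwise by $\bar F$, they are linearly ordered between the two ends of $\A_u$, and every component $V$ of $\A_u\sm\bigcup_n T^n\bar K$ is bounded (either ``inside'' one $T^n\bar K$ or ``between'' two consecutive ones). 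Since $\bar F$ fixes $\bd V\ssq\bigcup_n T^n\bar K$ pointwise, one gets $\bd\bar F^m(V)=\bd V$ for all $m$, so the whole $\bar F$-orbit of $V$ consists of components with the same (compact) boundary and is therefore confined to a fixed bounded region of $\A_u$; hence every orbit has bounded $u^\perp$-coordinate, giving $\rho(F)\ssq\R u$. Your sketch neither states this trapping mechanism nor supplies a substitute for it, so as written the proof is incomplete at precisely the point you yourself identify as the obstacle.

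The rank-two case is fine in outline (a $\Z^2$-periodic connected fixed continuum has a bounded complement, and the same boundary-trapping argument yields $\rho(F)=\{(0,0)\}$), though the phrase ``projecting onto $\T^2$ modulo a finite index sublattice'' is loose and should be replaced by the observation that the complement of a doubly periodic connected closed set has only bounded components.
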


\subsection{The invariant curve theorem}\label{s.KAM}
We will use the classical invariant curve theorem from KAM theory, which is originally due to Moser \cite{moser-KAM}. First let us recall that a diffeomorphism $f\colon \T^1\times \R\to \T^1\times \R$ is called \emph{exact symplectic} if it preserves the Lebesgue area form on $\T^1\times \R$ and the differential form $f^*(y\,dx) - y\,dx$ is exact. This means that the algebraic area between a simple loop $C\subset \T^1\times \R$ not homotopic to a point and its image $f(C)$ is zero. We state the version of the invariant curve theorem that we will use (which can be found in a more general form in \cite{bost}, for example).

\begin{thm}\label{t.KAM}
Let $f\colon \T^1\times \R\to \T^1\times \R$ a symplectic diffeomorphism of the form $f(x,y)=(x+\omega(y), y)$, where $\omega\colon \R^1\to \T^1$ is a $\cC^\infty$ map.  Suppose that $y_0\in \R$ is such that: 
\begin{itemize}
	\item[(i)] $\alpha = \omega(y_0)$ satisfies the following Diophantine condition: there exist $\tau>0$ and $K>0$ such that $\big|q\alpha - p\big| \geq K|q|^{-\tau}$ for all $(p,q)\in \Z^2\sm\{0\}$;
	\item[(ii)] $\frac{d\omega}{dy}(y_0)\neq 0$.
\end{itemize}
Then, any $\cC^\infty$ exact symplectic diffeomorphism $g$ sufficiently close to $f$ in the $\cC^\infty$ topology has an invariant circle $C_g$ which is the graph of a $\cC^\infty$ map $u_g\colon \T^1\to \R$, such that $g|_{C_g}$ is topologically conjugate to $f|_{\T^1\times \{y_0\}}$. Moreover, $u_g$ varies continuously with $g$.
\end{thm}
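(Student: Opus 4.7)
The plan is to prove Theorem~\ref{t.KAM} via a KAM-type Newton iteration with Nash--Moser smoothing, closely following the strategy of Moser and its subsequent refinements. The search is for a parametrisation of the invariant curve, i.e.\ an embedding $\phi\colon \T^1 \to \T^1 \times \R$ of the form $\phi(x) = (x + u(x), v(x))$ with $u,v \in \cC^\infty(\T^1)$ satisfying the conjugacy equation
\begin{equation}\label{e.conjugacy}
g \circ \phi \ = \ \phi \circ R_\alpha, \qquad R_\alpha(x) = x + \alpha.
\end{equation}
For the unperturbed map $f$ we have the trivial solution $\phi_0(x) = (x, y_0)$. The goal is to perturb this solution when $g$ is close to $f$. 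The exact-symplectic condition is crucial: it will guarantee the solvability of the cohomological equations appearing at each step.

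First I would set up the Newton step. Suppose $\phi_n$ is an approximate solution with error $e_n := g \circ \phi_n - \phi_n \circ R_\alpha$ small in some $\cC^k$-norm. Writing $\phi_{n+1} = \phi_n + \delta_n$ and linearising~\eqref{e.conjugacy}, the correction $\delta_n$ must satisfy $Dg(\phi_n)\delta_n - \delta_n \circ R_\alpha = -e_n$ up to quadratic errors. By using the special twist structure of $f$ and passing to coordinates adapted to the approximate tangent/normal frame of the curve, this linearised equation decouples (modulo quadratic corrections) into two scalar cohomological equations of the form
\begin{equation*}
\psi \circ R_\alpha - \psi \ = \ \eta - \bar\eta, \qquad \bar\eta := \int_{\T^1} \eta(x)\,dx.
\end{equation*}
The Diophantine condition on $\alpha$ lets us solve such equations by Fourier series, with the tame estimate
$\|\psi\|_{\cC^s} \leq C_{s,\tau,K}\,\|\eta\|_{\cC^{s+\tau+1}}$; the cost is a fixed loss of $\tau+1$ derivatives. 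The exact-symplectic hypothesis enters through the requirement $\bar\eta = 0$: writing the error $e_n$ as the symplectic defect of a nearly invariant loop and using that $g^\ast(y\,dx) - y\,dx$ is exact, the mean of $\eta$ is forced to vanish (possibly after a harmless adjustment of the rotation parametrisation by a constant translation of $\phi_n$ in the $x$-direction), closing the Newton step.

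Next I would run the iteration with smoothing. Choose a sequence $N_n \to \infty$ with $N_{n+1} = N_n^{3/2}$, smoothing operators $S_{N_n}$ on $\T^1$, and solve at each step the linearised equation using $S_{N_n} e_n$ in place of $e_n$. The standard Nash--Moser estimates then yield quadratic-plus-tame convergence: if $\epsilon_n := \|e_n\|_{\cC^{k_0}}$ then $\epsilon_{n+1} \leq C\,N_n^{a}\,\epsilon_n^{2} + C\,N_n^{-b}\,\|e_n\|_{\cC^{k_1}}$ for suitable $a,b,k_0,k_1$, while the high Sobolev norms of the iterates grow only polynomially in $N_n$. Interpolation between the two bounds forces $\epsilon_n \to 0$ super-exponentially provided the initial error is sufficiently small, which is exactly the smallness hypothesis $\|g - f\|_{\cC^K}<\delta$ for some explicit $K$ depending on $\tau$. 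The limit $\phi = \lim \phi_n$ lies in $\cC^\infty(\T^1)$, and \eqref{e.conjugacy} holds, so $C_g := \phi(\T^1)$ is the sought invariant circle; the fact that it is a graph follows from the fact that the perturbation of $\phi_0$ stays small in $\cC^1$. Finally, continuous dependence of $u_g$ on $g$ follows from the uniformity of all the estimates in the iteration: applying the same scheme to a second map $g'$ near $g$ and comparing iterate-by-iterate yields $\|\phi_g - \phi_{g'}\|_{\cC^\infty} \to 0$ as $g' \to g$.

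The main obstacle, as always in KAM theory, is the interplay between small divisors and loss of derivatives: each Newton step costs $\tau+1$ derivatives, so a naive iteration diverges. Overcoming this via the Nash--Moser smoothing and the delicate choice of $N_n$, together with verifying that the exact-symplectic structure is preserved at each step (so that the solvability condition $\bar\eta = 0$ is maintained), is the technical heart of the argument. Since these estimates are now classical (see \cite{bost} for the precise formulation used here), I would quote them rather than reproduce them in full; the only point requiring care in the present setting is to verify that the statement is formulated in a form where the continuity of $u_g$ in $g$ is a direct output of the construction, which it is once the iteration is carried out uniformly in $g$ within a $\cC^K$-neighbourhood of $f$.
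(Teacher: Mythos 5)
The paper does not actually prove Theorem~\ref{t.KAM}: it is quoted as a classical result, with attribution to Moser \cite{moser-KAM} and a pointer to \cite{bost} for a formulation close to the one stated. So there is no ``paper proof'' to compare against; you are supplying a proof where the authors supplied a citation. That said, since you do offer a sketch, it is worth evaluating on its own merits.

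Your outline is the standard Nash--Moser/Newton-iteration scheme and most of it is sound: the conjugacy equation $g\circ\phi = \phi\circ R_\alpha$, the linearisation decoupling (in an adapted frame) into a pair of scalar cohomological equations, the Diophantine small-divisor estimate with loss of $\tau+1$ derivatives, the smoothing and interpolation to beat that loss, and the uniformity argument for continuous dependence. The genuine gap is in how you close the solvability conditions. The two cohomological equations each carry a mean condition, and they are handled by two \emph{different} hypotheses. Exact symplecticity (via the flux/Stokes argument you invoke) kills the mean of the radial component only. The mean of the other component is \emph{not} removed by exactness nor by ``a constant translation of $\phi_n$ in the $x$-direction'' --- a translation in $x$ merely reparametrises the circle and shifts nothing, so it cannot annihilate a nonzero mean. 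What you actually need is the twist hypothesis (ii): it guarantees that shifting the approximate curve by a constant in the \emph{$y$-direction} moves the rotation number at a nonzero rate, so you can adjust this scalar parameter at each step (or, in the Rüssmann/Herman ``translated curve'' formulation, solve for an invariant curve modulo vertical translation and then use (ii) to cancel the translation). As written, your sketch never uses condition (ii), and the theorem is plainly false without it --- if $\omega$ is constant on a neighbourhood of $y_0$, a small perturbation with nonzero flux on a band can destroy every invariant graph near height $y_0$. Replacing the ``$x$-translation'' remark with the correct $y$-shift mechanism driven by the twist condition would close the argument (modulo the quantitative estimates you defer to \cite{bost}).
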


We remark that, as it is well known, the real numbers satisfying a Diophantine condition as above are dense in $\R$ (moreover, they have full Lebesgue measure). In particular, even if (i) does not hold, condition (ii) implies that arbitrarily close to $y_0$ there are values of $y$ for which it holds, and therefore there are invariant circles which are persistent by $\cC^\infty$-small perturbations. We refer to the continuations of these invariant circles as \emph{KAM circles}.

\section{Basic observations on the kicked Harper map}\label{StandardFamily}

The aim of this section is to collect a number of basic observations about the
symmetries and the fixed and periodic points of the maps $f_{\alpha,\beta}$
given by (\ref{e.standardfamily}) and their lifts \Fab. In order to simplify
notation, we let
\[
s(x) \ = \ \sin(2\pi x) \ ,
\]
so that
\[
F_{\alpha,\beta}(x,y) \ = \ (x+\alpha s(y+\beta s(x)),y+\beta s(x)) \ 
\]
and
\[
H_\alpha(x,y) \ = \ (x+\alpha s(y),y),\quad  V_\beta(x,y) \ = \ (x,y+\beta s(x)) \ .
\]
Then we have $\Fab=H_\alpha\circ V_\beta$, which allows to see that \Fab\ is
area-preserving and also extends to a biholomorphic diffeomorphism of $\C^2$, which will be important in Section \ref{sec:irrotational}.

\subsection{Symmetries}\label{sec:sym}

The maps $f_{\alpha,\beta}$ and their lifts $F_{\alpha,\beta}$ have a number of
natural symmetries, which directly translate to symmetries of their rotation
sets $\rho(F_{\alpha,\beta})$ and the map
$(\alpha,\beta)\mapsto(\rho(F_{\alpha,\beta}))$. 

To properly state the symmetries, consider the following linear involutions:
\begin{align*}
S_1\colon (x,y)\mapsto& (-x, y),& S_2\colon (x,y)&\mapsto(x,-y),\\
S\colon (x,y)\mapsto& (-x,-y), &D\colon (x,y)&\mapsto(y,x).
\end{align*}
We will also use the general fact that $\rho(F^{-1}) = S\rho(F)$.  Futher, note
that
\begin{equation}
\label{eq:sym-diag}
H_\alpha\circ D = D\circ V_\alpha\quad \text{ and } \quad V_\beta\circ D = D\circ H_\beta. 
\end{equation}
which, noting that $H_\alpha^{-1}=H_{-\alpha}$ and $V_{\beta}^{-1}=V_{-\beta}$, implies
\begin{equation}
\label{eq:sym-diag-F}
\Fab\circ D = D\circ F_{-\beta,-\alpha}^{-1}
\end{equation}

Moreover,
\begin{equation}
\label{eq:sym-sym}
H_\alpha\circ S_i = S_i\circ H_{-\alpha}\quad \text{ and } \quad V_{\beta}\circ S_i = S_i\circ V_{-\beta}  \quad \text{ for $i\in \{1,2\}$,}
\end{equation}
which implies
\begin{equation}
\label{eq:reflection-sym}
F_{\alpha,\beta}\circ S_i = S_i\circ F_{-\alpha, -\beta}\quad \text{ for $i\in \{1,2\}$},
\end{equation}
and, since $S=S_1\circ S_2$, we also have
\begin{equation}
\label{eq:reflection-origin}
F_{\alpha,\beta}\circ S = S\circ F_{\alpha,\beta}.
\end{equation}

\stit{Reversibility.} For any $G\in \{H_\alpha\circ S_1,\, H_\alpha \circ S_2,\, S_1 \circ V_\beta,\, S_2\circ V_\beta\}$, one has 
\begin{equation} \label{eq:reversible}
 F_{\alpha,\beta}\circ G = G\circ F_{\alpha,\beta}^{-1},
\end{equation}
and $G^2 = \id$, as one can directly verify using (\ref{eq:sym-sym}). This
property of a map being conjugated to its inverse by means of an involution is
often referred to as \emph{reversibility} of the dynamics\footnote{The notion of reversibility comes as a generalization of the concept from classical mechanics of reversible mechanical systems, which are those whose Hamiltonian assumes a particularly simple form $H=K+V$ (kinetic energy + potential). Its consequences go beyond the scope of this article; we refer to \cite{devaney} for further details.}
\medskip

\stit{Reflection symmetries.} First note that from (\ref{eq:reflection-origin})
and Proposition \ref{p.rotset}(2) we see that
\begin{equation}
\label{eq:sym-origin}
\rho(\Fab) = S\rho(\Fab) = \rho(\Fab^{-1}).
\end{equation}
On the other hand, since $H_{-\alpha} = H_\alpha^{-1}$ and
$V_{-\beta}=V_\beta^{-1}$, one easily verifies that
\[
H_\alpha\circ F_{-\alpha,-\beta} = F_{\alpha,\beta}^{-1}\circ H_\alpha. 
\]
In particular, since $[H_\alpha] = \id$, the above equation and Proposition \ref{p.rotset}(2) imply
\begin{equation}
\label{eq:sym-odd}
\rho(F_{-\alpha,-\beta}) = \rho(F_{\alpha,\beta}^{-1}) \stackrel{(\ref{eq:sym-origin})}{=} \rho(\Fab).
\end{equation}
From (\ref{eq:reflection-sym}) and Proposition \ref{p.rotset}(2) we se that
$\rho(\Fab) = S_i \rho(F_{-\alpha,-\beta})$ for $i\in \{1,2\}$, so the above
implies
\begin{equation}
\label{eq:sym-axes}
\rho(\Fab) = S_i \rho(\Fab)\quad \text{ for } i\in \{1,2\}.
\end{equation}
In other words, the rotation set is symmetric with respect to the two coordinate axes.
\begin{rem}
Together with the convexity of the rotation set and the fact that $\rho(\Fab)$
always contains the origin (since the origin is a fixed point), the symmetry
with respect to the two coordinate axes implies
Proposition~\ref{p.rotset_shapes}.
\end{rem}
We also have, using (\ref{eq:sym-diag-F}) and Proposition \ref{p.rotset}(2),
\begin{equation}
\label{eq:sym-diag-rot}
\rho(F_{\alpha,\beta}) =
D\rho(F_{-\beta, -\alpha}^{-1}) \stackrel{(\ref{eq:sym-origin})}{=}
D\rho(F_{-\beta,-\alpha}) \stackrel{(\ref{eq:sym-odd})}{=}
D\rho(F_{\beta,\alpha}).\
\end{equation}

\stit{Rotation symmetry.} Consider the rotation $R:(x,y) \mapsto (-y, x)$ by
$\pi/2$.  Noting that $R = S_1D$, we see that
\begin{equation}
\label{eq:sym-R}
\rho(\Fab) \stackrel{(\ref{eq:sym-axes})}{=} S_1\rho(F_{\alpha, \beta}) \stackrel{(\ref{eq:sym-diag-rot})}{=} S_1D\rho(F_{\beta, \alpha}) = R\rho(F_{\beta,\alpha}).
\end{equation}
In particular, $\rho(F_{\alpha,\alpha}) = R\rho(F_{\alpha, \alpha})$ so that for
parameters in the diagonal $\alpha=\beta$ the rotation set is invariant under
rotations by angle $\pi/2$.

\begin{rem}
  The above symmetries, in particular (\ref{eq:sym-diag-rot}) and
  (\ref{eq:sym-R}), imply that the set $\cN$ is symmetric with respect to the
  diagonal and to rotations by $\pi/2$ around the origin, which allows us to
  restrict our attention to parameters $0\leq \beta\leq \alpha$ below the
  diagonal in order to analyze the structure of the parameter sets $\cN$ and
  $\cE$.
\end{rem}

\begin{rem} The previous analysis relies only on the fact that $s$ is an odd function. Therefore, it also applies if one replaces $s$ by any $1$-periodic odd continuous function.
\end{rem}

\stit{Translation symmetries.} From the fact that $s(x+1/2) = -s(x)$, we obtain
some additional symmetries. Consider the translations
\[ 
  T_1\colon (x,y) \mapsto (x+1/2, y), \quad T_2\colon(x,y)\mapsto (x, y+1/2)
\]
Then it is easily checked that
\begin{equation} \label{e.horizontal_translation_symmetry}
  F_{\alpha,\beta}\circ T_1 = T_1 \circ F_{\alpha,-\beta} \quad \text{ and }\quad
  F_{\alpha,\beta}\circ T_2 = T_2\circ F_{-\alpha,\beta}.
\end{equation}
Consequently Proposition \ref{p.rotset}(2) implies
\begin{equation}
  \label{e.rotset_translation_symmetry}
  \rho(F_{\alpha,\beta}) \ = \ \rho(F_{\alpha,-\beta}) \ =
  \ \rho(F_{-\alpha,\beta}) \ = \ \rho(F_{-\alpha,-\beta}) \ .
\end{equation}
Note that the last equality can be derived from the first one, applied to
$F_{-\alpha,\beta}$, or similarly from the fact that if we let $T=T_1\circ T_2$
then we have
\begin{equation}\label{e.translation_symmetry} F_{\alpha,\beta}\circ T\ =\ T\circ F_{-\alpha,-\beta}
\end{equation}
by (\ref{e.horizontal_translation_symmetry}).  We also note that these facts
only depend on the fact that $s(x+1/2)=-s(x)$ and will remain true if $s$ is
replaced by any other function with this property.\medskip

\subsection{A threshold for diffusion} \label{DiffusionThreshold}

As we saw in Proposition \ref{p.rotset_shapes}, if $\rho(F_{\alpha,\beta})$ has
empty interior it is contained in one of the coordinate axes. It is known that
when the rotation set is a nondegenerate interval, the displacement in the
direction perpendicular to the interval is uniformly bounded (see
\cite{Davalos2013SublinearDiffusion,GuelmanKoropeckiTal2012Annularity}). In our
case, the reversibility of the dynamics allows us to obtain a direct proof and
an explicit bound.
% Let us denote by $L_k^v$, $L_k^h$ the lines $\{(x,k/2) :
%x\in \R\}$ and $\{(k/2, y) : y\in \R\}$, respectively.

\begin{prop}\label{prop:axis-fb} 
If $z\in \R\times \{0\}$ and $\Fab^{n}(z)\in \R\times \{k/2\}$ for some $k, n\in
\Z$, then $\Fab^{2n}(z) = z + (0,k)$. In particular, $(0, \frac{k}{2n})\in
\rho(\Fab)$. An analogous property holds in the horizontal direction.

%
%
%$z\in L_0^h \cap \Fab^{n}(L_k^h)$ for some $k,n\in \Z\sm\{0\}$, then $\Fab^{2n}(z) = z - (0,k)$. In particular, $(0, -\frac{k}{2n})\in \rho(\Fab)$. 
%Similarly, if $z\in L_0 \cap \Fab^{n}(L_k^v)$ for some $k,n\in \Z\sm\{0\}$, then $\Fab^{2n}(z) = z - (k,0)$, and $(-\frac{k}{2n}, 0)\in \rho(\Fab)$. 
\end{prop}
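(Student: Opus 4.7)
The plan is to exploit the reversibility relation (\ref{eq:reversible}) with a carefully chosen involution. For the vertical statement I would take $G = H_\alpha \circ S_2$, which explicitly is $G(x,y) = (x - \alpha s(y), -y)$. Two observations make this choice work: first, since $s(0) = 0$, the involution $G$ fixes every point of the horizontal axis $\R \times \{0\}$; second, since $s(k/2) = \sin(\pi k) = 0$ for every integer $k$, the involution $G$ acts on the line $\R \times \{k/2\}$ simply as the translation $w \mapsto w - (0,k)$.

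Starting from $\Fab \circ G = G \circ \Fab^{-1}$, I would iterate to get $\Fab^n \circ G = G \circ \Fab^{-n}$ for every $n \in \Z$. Applied to a point $z \in \R \times \{0\}$, using $G(z) = z$, this yields
\[
  G(\Fab^n(z)) \ = \ \Fab^{-n}(z).
\]
If $w = \Fab^n(z) \in \R \times \{k/2\}$, the second observation above gives $G(w) = w - (0,k)$, so $\Fab^{-n}(z) = \Fab^n(z) - (0,k)$. Applying $\Fab^n$ to both sides and using that $\Fab$ commutes with integer translations (being a lift of a torus map), we conclude $\Fab^{2n}(z) = z + (0,k)$.

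From here the rotation-set statement is immediate: iterating gives $\Fab^{2nj}(z) = z + (0,jk)$ for all $j\in\N$, so the limit in the definition (\ref{e.rotset-def}) yields $(0, k/(2n)) \in \rho(\Fab)$. The analogous horizontal statement follows by the parallel argument with the involution $G' = S_1 \circ V_\beta$, which fixes the vertical axis $\{0\} \times \R$ pointwise and acts on $\{k/2\} \times \R$ as translation by $-(k,0)$. There is no real obstacle here: once the correct reversing involution is identified, the argument is essentially a one-line identity, and the identities $s(0) = 0$ and $s(k/2) = 0$ for integer $k$ are exactly what makes the geometry of the fixed set of $G$ mesh with the lattice $\Z^2$.
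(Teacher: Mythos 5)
Your argument is correct and is essentially identical to the paper's: both use the reversing involution $G = H_\alpha\circ S_2$ (resp.\ $S_1\circ V_\beta$), the identity $G(z)=z$ on the axis, the computation $G(w)=w-(0,k)$ on $\R\times\{k/2\}$ via $s(k/2)=0$, and the reversibility relation to conclude $\Fab^{-n}(z)=\Fab^n(z)-(0,k)$. The only cosmetic difference is that you spell out $G$ explicitly as $(x,y)\mapsto(x-\alpha s(y),-y)$, whereas the paper computes $G(\Fab^n(z))$ directly; the substance is the same.
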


\begin{proof}
Suppose $z = (t, 0)$ and $\Fab^{n}(z) = (t',k/2)$.  Letting $G = H_\alpha\circ
S_2$, one has $G(z) = z$, so by the reversibility equation (\ref{eq:reversible})
we deduce
$$G(\Fab^{n}(z)) = \Fab^{-n}(G(z)) = \Fab^{-n}(z),$$
and noting that $s(-k/2)=0$ we see that
$$G(\Fab^{n}(z)) = H_{\alpha}(S_2(t', k/2)) = (t', -k/2) = \Fab^{n}(z) -
(0,k).$$ Therefore $\Fab^{-n}(z) = \Fab^n(z) - (0,k)$, which yields $z =
\Fab^{2n}(z) - (0,k)$, and the claim follows.  The analogous claim for the
horizontal direction is proven similarly using $G = S_1 \circ V_\beta$.
\end{proof}

\begin{cor} If $\pi_i(\rho(\Fab)) = \{0\}$, then $|\pi_i(\Fab^n(z) - z)| < 1$ for all $z\in \R^2$ and $n\in \Z$. 
\end{cor}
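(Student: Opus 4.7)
My plan is to reduce the desired bound first to a bound on the half-integer horizontal lines, and then extend to arbitrary $z\in\R^2$ via a topological sandwich argument. I treat the vertical case $i=2$ only, since the horizontal case is entirely symmetric, using instead the reversing involution $S_1\circ V_\beta$, whose fixed set in $\R^2$ is $\{0\}\times\R$.

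The first step I would carry out is the extension of Proposition~\ref{prop:axis-fb} to the line $\R\times\{1/2\}$. The reversing involution $G=H_\alpha\circ S_2$ no longer fixes such points pointwise: using $s(-1/2)=0$, one checks $G(t,1/2)=(t,-1/2)=(t,1/2)-(0,1)$. Running the same argument as in the Proposition through the identity $\Fab^n\circ G=G\circ\Fab^{-n}$, and carefully propagating this integer translation (which replaces $k$ by $k-1$), should yield: if $z\in\R\times\{1/2\}$ and $\Fab^n(z)\in\R\times\{k/2\}$, then $\Fab^{2n}(z)=z+(0,k-1)$, and therefore $(0,(k-1)/(2n))\in\rho(\Fab)$. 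Under the hypothesis $\pi_2(\rho(\Fab))=\{0\}$ this forces $k=1$. Combining with Proposition~\ref{prop:axis-fb} and $\Z$-equivariance of $\Fab$ in the vertical direction, I obtain that for every $m\in\Z$, every $z\in\R\times\{m/2\}$ and every $n\in\Z$, $\pi_2(\Fab^n(z))\notin\tfrac12\Z\setminus\{m/2\}$.

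Since $(0,m/2)$ is a fixed point of $\Fab$, the $\Z$-periodic continuous function $t\mapsto\pi_2(\Fab^n((t,m/2)))-m/2$ vanishes at $t=0$ and avoids $\tfrac12\Z\setminus\{0\}$, so by the intermediate value theorem it takes values in $(-1/2,1/2)$. Hence $|\pi_2(\Fab^n(z)-z)|<1/2$ for every $z\in\R\times\tfrac12\Z$. For arbitrary $z=(t,s)\in\R^2$ with $s\in(m/2,(m+1)/2)$, the images $C_1=\Fab^n(\R\times\{m/2\})$ and $C_2=\Fab^n(\R\times\{(m+1)/2\})$ are then disjoint horizontally $\Z$-periodic simple curves, respectively contained in the strips $\R\times(m/2-1/2,m/2+1/2)$ and $\R\times(m/2,m/2+1)$. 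Because $\Fab^n$ is an orientation-preserving $\Z^2$-equivariant homeomorphism of $\R^2$ with bounded deviation from the identity (so it preserves the two asymptotic vertical ends), it maps the open middle strip $\R\times(m/2,(m+1)/2)$ onto the bounded connected component $R$ of $\R^2\setminus(C_1\cup C_2)$. Since $\partial R\subset C_1\cup C_2$ one has $R\subset\R\times(m/2-1/2,m/2+1)$, and combined with $s\in(m/2,m/2+1/2)$ this gives $\pi_2(\Fab^n(z)-z)\in(-1,1)$, as required.

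The main obstacle is the extension of Proposition~\ref{prop:axis-fb} to $\R\times\{1/2\}$: the natural reversibility only fixes this line modulo the integer translation $(0,1)$, and one must carefully track how this integer propagates through the reversibility computation. Once this bookkeeping is in place, the intermediate value argument on the half-integer lines and the sandwich argument for arbitrary $z$ are routine, and the strict inequality $<1$ in the conclusion is built into the open inclusion for the region $R$.
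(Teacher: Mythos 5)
Your proof is correct, but it takes a genuinely different route from the paper's. The paper argues by contrapositive: assuming some vertical displacement of size $\geq 1$ exists, it normalizes (via the intermediate value theorem and the odd symmetry $\Fab(-z)=-\Fab(z)$) to a point $z_0\in\R\times(-1,0]$ realizing displacement exactly $1$, shows by a half-plane topology argument that the image curve $\Fab^n(\R\times\{0\})$ (or $\Fab^n(\R\times\{-1/2\})$) must cross the line $\R\times\{1/2\}$ (resp.\ $\R\times\{0\}$), and then invokes Proposition~\ref{prop:axis-fb} directly to manufacture a nonzero vertical rotation vector. You give a direct proof in three stages: first extending Proposition~\ref{prop:axis-fb} to the lines $\R\times\{m/2\}$ by carefully propagating the integer correction $G(t,1/2)=(t,1/2)-(0,1)$ through the reversibility identity (the bookkeeping you do, $\Fab^{2n}(z)=z+(0,k-1)$, is exactly right; one could alternatively obtain this via the translation symmetry~(\ref{e.horizontal_translation_symmetry}), which the paper sidesteps by normalizing $z_0$ into a fundamental domain instead); then using the fixed points $(0,m/2)$ and the intermediate value theorem to get the sharper bound $|\pi_2(\Fab^n(z)-z)|<1/2$ on every half-integer horizontal line; and finally trapping a general $z$ between the two curves $C_1=\Fab^n(\R\times\{m/2\})$ and $C_2=\Fab^n(\R\times\{(m+1)/2\})$. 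Your route is somewhat longer and requires the extension of the Proposition, but it is constructive rather than contrapositive and yields the stronger intermediate estimate $|\pi_2(\Fab^n(z)-z)|<1/2$ on the lattice $\R\times\tfrac12\Z$ as a byproduct; the paper's argument is more economical since it only ever uses Proposition~\ref{prop:axis-fb} on the line $\R\times\{0\}$ (via an implicit inversion $n\mapsto -n$ for the second case).
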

\begin{proof} 
We consider the case $i=2$, the case $i=1$ is analogous. Assuming that there
exist $z_0\in \R^2$ and $n\in \Z$ such that $|\pi_2(\Fab^n(z_0) - z_0)| \geq 1$,
we need to prove that $\pi_2(\rho(\Fab))\neq \{0\}$.

Since $z\mapsto \pi_2(\Fab^n(z)-z)$ attains the value $0$, by the intermediate
value theorem we may choose $z_0$ such that $|\pi_2(\Fab^n(z_0)-z_0)| =
1$. Moreover, since $\Fab(-z) = -\Fab(z)$ we may assume $\pi_2( \Fab^n(z_0) -
z_0) = 1$ (replacing $z_0$ by $-z_0$ if necessary). In addition, we may assume
that $z_0\in \R\times (-1,0]$ since $z_0$ can be replaced by any of its integer
  translates.

Consider first the case where $z_0\in \R\times (-1/2,0]$, so that
  $\Fab^n(z_0)\in \R\times (1/2,1]$. Then the image by $\Fab^n$ of the
    half-plane $H_0 = \{(x,y):y\leq 0\}$ is bounded from above and intersects
    $\{(x,y): y > 1/2\}$, which implies that its boundary $\bd \Fab^n(H_0) =
    \Fab^n(\R\times \{0\})$ also intersects $\{(x,y): y > 1/2\}$. Since the line
    $\R\times \{0\}$ contains fixed points of $\Fab$, we see that
    $\Fab^n(\R\times \{0\})$ also intersects $\{(x,y):y<1/2\}$, and therefore it
    intersects the line $\R\times \{1/2\}$.  Thus the previous proposition
    implies that $\pi_2(\rho(\Fab))\neq \{0\}$.

In the case that $z_0\in \R\times (-1, -1/2]$, an analogous argument shows that
  $\Fab^n(\R\times \{-1/2\})$ intersects $\R\times \{0\}$, and the previous
  proposition again implies $\pi_2(\rho(\Fab))\neq \{0\}$, completing the proof.
\end{proof}

\subsection{Local analysis for irrotational fixed points}
For parameters $\alpha,\beta\neq 0$, the map $\Fab$ has, up to integer
translations, exactly four fixed points: $(0,0),(0,1/2),(1/2,0)$ and
$(1/2,1/2)$.  In view of the symmetries described in \ref{sec:sym}, we analyze the stability of these fixed points when $\alpha>0$ and $\beta >0$, since the other cases can be easily deduced from these. Note that the Jacobian of
\Fab\ is given by
\begin{equation}\label{eq:jacob}
  DF_{\alpha,\beta}(x,y) \ = \ \twomatrix{4\pi^2\alpha\beta\cos(2\pi
    y'))\cos(2\pi x) + 1}{2\pi\alpha\cos(2\pi y'))}{2\pi\beta\cos(2\pi x)}{1}
\end{equation}
where $y'=y + b\sin(2\pi x)$. At the origin, this simplifies to 
\begin{equation}
  DF_{\alpha,\beta}(0,0) \ = \ \twomatrix{4\pi^2\alpha\beta+1}{2\pi\alpha}{2\pi\beta}{1} \ .
\end{equation}
Its eigenvalues are 
\begin{eqnarray}
  \lambda_1^{(0,0)} & = & 2\pi^2\alpha\beta - 2\pi(\alpha\beta(\pi^2\alpha\beta + 1))^{1/2} + 1 \ < 1 \ ,\\
  \lambda_2^{(0,0)} & = &  2\pi^2\alpha\beta + 2\pi(\alpha\beta(\pi^2\alpha\beta+ 1))^{1/2} + 1 \ > 1 \ ,
\end{eqnarray}
with eigenvectors
\begin{eqnarray}
  v_1^{(0,0)} & = & \twovector{-((\alpha\beta(\pi^2\alpha\beta + 1))^{1/2} -
    \pi\alpha\beta)/\beta}{1} \quad\textrm{and} \\ v_2^{(0,0)} & = &
  \twovector{((\alpha\beta(\pi^2\alpha\beta + 1))^{1/2} +
    \pi\alpha\beta)/\beta}{1}
\end{eqnarray}

In $(1/2,1/2)$ the Jacobian is 
\begin{equation}
  DF_{\alpha,\beta}(0,0) \ = \ \twomatrix{4\pi^2\alpha\beta+1}{-2\pi\alpha}{-2\pi\beta}{1} \ .
\end{equation}
It has the same eigenvalues, $\lambda^{(\halb,\halb)}_1=\lambda^{(0,0)}_1$ and
$\lambda^{(\halb,\halb)}_2=\lambda^{(0,0)}_2$, but with eigenvectors
\begin{eqnarray}
  v_1^{(\halb,\halb)} & = & \twovector{((\alpha\beta(\pi^2\alpha\beta +
    1))^{1/2} - \pi\alpha\beta)/\beta}{1} \quad\textrm{and}
  \\ v_2^{(\halb,\halb)} & = & \twovector{-((\alpha\beta(\pi^2\alpha\beta +
    1))^{1/2} + \pi\alpha\beta)/\beta}{1}
\end{eqnarray}

Thus $(0,0)$ and $(1/2, 1/2)$ are hyperbolic fixed points. The remaining two
fixed points are $(0,1/2)$ and $(1/2, 0)$. The Jacobian at those points is
\begin{equation}
  \twomatrix{1-4\pi^2\alpha\beta}{\pm 2\pi\alpha}{\mp 2\pi\beta}{1} \ .
\end{equation}
Note that its trace is $2-4\pi^2\alpha\beta$ which can only be equal to $2$ if
either $\alpha$ or $\beta$ is $0$. In particular the fixed points are
elementary (an elementary fixed point is one where the Jacobian of the map does not have $1$ as an eigenvalue).

\subsection{Periodic orbits and a priori lower bounds on the rotation set}\label{sec:apriori}
Suppose that $\alpha,\beta\geq n$ for some $n\in\N$. Choose $x,y\in[0,1]$ with
$s(x)=n/\beta$ and $s(y)=n/\alpha$. Then it is easily checked that
\[
\Fab(\pm x,\pm y) \ = \ (x\pm n,y\pm n)
\]
so that $\{(n,n),(-n,n),(n,-n),(-n,-n)\}\ssq \rho(\Fab)$. Hence, by convexity of
the rotation set $[-n,n]^2\ssq \rho(\Fab)$. In the particular case
$\alpha=\beta=n$, we even obtain the equality $\rho(F_{n,n})=[-n,n]^2$, as the
maximal displacement in the $x$- and $y$-direction is exactly $n$.

If $\alpha\geq 1/2$, we can choose $y\in [0,1]$ such that $s(y) = 1/(2\alpha)$, so we have $\Fab(0,y) = (1/2, y)$, and $F_{\alpha,\beta}^2(0,y) = (1,y)$. This implies that $\rho(F_{\alpha,\beta})$ contains $(1/2,0)$ (and by symmetry $(-1/2, 0)$ as well). By a similar argument, if $\beta \geq 1/2$ then $(0,-1/2)$ and $(0,1/2)$ belong to $\rho(F_{\alpha,\beta})$.

In particular, if $\alpha,\beta\geq 1/2$, the rotation set contains
the square $$\cQ=\{(x,y)\in\R^2\mid |x|+|y|\leq 1/2\},$$ so it has non-empty interior. This proves Proposition~\ref{p.rotset_lowerbound}.  As a consequence, this means that we can
restrict to parameters in the $1/2$-neighbourhood of the coordinate axes when
analyzing the sets $\cN$ and $\cE$. 

\subsubsection{Explicit cases of mode-locking}
Although computing the rotation sets explicitly seems to be a difficult problem (see Section \ref{questionsandremarks}), it is possible for some particular parameters. Moreover, one may verify that for some very special parameters, the rotation set has nonempty interior and mode-locks (meaning that it remains constant under small variations of the parameters). Particularly,

\begin{itemize}
	\item[(i)] There exists $\delta>0$ such that $\rho(\Fab) = [-1,1]^2$ for all $(\alpha,\beta)\in [1, 1+\delta]^2$; 
	\item[(ii)] There exists $\delta>0$ such that for $(\alpha, \beta)\in [0.5, 0.5+\delta]^2$, the rotation set $\rho(\Fab)$ is the square $Q$ with vertices $(-0.5, 0)$, $(0, -0.5)$, $(0.5, 0)$, and $(0,0.5)$.
\end{itemize}

To verify this one may use the following simple fact:
\begin{prop} Suppose $v\in \R^2\sm \{0\}$, $u\in \Z^2$ and $c\in \R$ are such that, for all $z$ with $\langle z, v\rangle = c$, one has $\langle F(z)-z, v\rangle \leq \langle u, v\rangle$. Then $\rho(F)\subset \{w : \langle w, v\rangle \leq \langle u, v\rangle\}$.
\end{prop}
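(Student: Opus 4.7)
The plan is to reduce to the case $u = 0$, establish forward invariance of an infinite family of open half-planes bounded by integer translates of $L_c := \{z : \langle z, v\rangle = c\}$, and then extract a uniform upper bound on $\langle F^n(z) - z, v\rangle$.

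First, $\tilde F := F - u$ is again a lift of $f$ since $u \in \Z^2$, and $\rho(\tilde F) = \rho(F) - u$. The hypothesis translates to $\langle \tilde F(z) - z, v\rangle \leq 0$ on $L_c$ and the desired conclusion to $\rho(\tilde F) \ssq \{w : \langle w, v\rangle \leq 0\}$, so it suffices to treat $u = 0$. The displacement $\phi(z) := \langle F(z) - z, v\rangle$ is then $\Z^2$-periodic and non-positive on $L_c$; by periodicity, $\phi \leq 0$ holds on every parallel line $L_{c'} := \{z : \langle z, v\rangle = c'\}$ with $c' \in c + \Lambda$, where $\Lambda := \{\langle k, v\rangle : k \in \Z^2\} \ssq \R$.

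The central step is to show that, for each such $c'$, the open half-plane $H_{c'}^- := \{z : \langle z, v\rangle < c'\}$ is forward invariant under $F$. The hypothesis reads $F(L_{c'}) \ssq \overline{H_{c'}^-}$. Since $F - \id$ is bounded (being $\Z^2$-periodic), $F$ is a proper map, so $F(L_{c'})$ is a properly embedded copy of $\R$ in $\R^2$ that separates the plane into exactly two connected open components. The half-plane $H_{c'}^+$ is disjoint from $F(L_{c'})$ and hence lies in one of them, which I call the upper component. The complementary lower component is disjoint from $H_{c'}^+$; moreover, no point of $L_{c'} \sm F(L_{c'})$ can belong to it, since such a point is joined to $H_{c'}^+$ by an arbitrarily short segment in direction $v$ avoiding $F(L_{c'})$. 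Hence the lower component is contained in $H_{c'}^-$. On the other hand, $F(H_{c'}^-)$ is connected, disjoint from $F(L_{c'})$, and contains points $w$ with $\langle w, v\rangle$ arbitrarily negative (since $F$ is a bounded perturbation of the identity); so it coincides with the lower component, yielding $F(H_{c'}^-) \ssq H_{c'}^-$.

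By induction $F^n(H_{c'}^-) \ssq H_{c'}^-$ for every $n \in \N$ and every $c' \in c + \Lambda$. For fixed $z_0 \in \R^2$ and $c' \in c + \Lambda$ with $c' > \langle z_0, v\rangle$, it follows that $\langle F^n(z_0), v\rangle < c'$ for all $n$. If $v$ has rational direction then $\Lambda = \lambda\Z$ is discrete and $c'$ can be chosen with $c' - \langle z_0, v\rangle \leq \lambda$, yielding a uniform bound $\langle F^n(z_0) - z_0, v\rangle < \lambda$; if $v$ has irrational direction, $\Lambda$ is dense in $\R$, and letting $c' \searrow \langle z_0, v\rangle$ gives $\langle F^n(z_0) - z_0, v\rangle \leq 0$. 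In both cases $\langle F^n(z_0) - z_0, v\rangle/n$ has non-positive limsup, so every $\rho \in \rho(F)$ satisfies $\langle \rho, v\rangle \leq 0$, and unfolding the reduction gives the claim. The main obstacle I anticipate is the identification of $F(H_{c'}^-)$ with the lower component of $\R^2 \sm F(L_{c'})$; while intuitive from $F$ being a bounded perturbation of the identity that preserves asymptotic directions, a rigorous argument requires the Jordan-type separation theorem for properly embedded lines in $\R^2$ and a careful verification that no piece of $L_{c'}$ strays into the lower component.
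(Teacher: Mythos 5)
The proposal is correct and follows essentially the same strategy as the paper: both hinge on showing that a half-plane bounded by $L_c$ is carried into its $u$-translate (the paper proves $F(H_0)\subset H_0+u$ directly; you shift by $u$ first and prove $F(H_{c'}^-)\subset H_{c'}^-$), then pass to the rotation set via the $\Z^2$-periodicity of $F-\id$. Your write-up simply fills in the topological details (properly embedded separating line, boundedness of $F-\id$) that the paper states tersely, and uses the lattice $\{\langle k,v\rangle : k\in\Z^2\}$ to get a uniform displacement bound where the paper keeps the additive term $(c-\langle z,v\rangle)/n$ and invokes periodicity at the end — two cosmetically different versions of the same step.
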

\begin{proof}
	Let $H_0 = \{z : \langle z, v\rangle \leq c\}$. Note that $H_0$ is bounded by the line $L = \{z : \langle z, v\rangle = c\}$. The hypothesis implies that for $z\in L$ one has $\langle F(z), v\rangle \leq c + \langle u, v\rangle$. From this one deduces that $F(H_0) \subset H_0+u = \{z+u : z\in H_0\}$. Indeed, if this is not the case then some $z\in \bd H_0 = L$ is mapped outside $H_0+u$, which implies that $\langle F(z)-u, v\rangle \geq c$ contradicting our previous remark. Thus $F(H_0)\subset H_0+u$ and since $u\in \Z^2$ one deduces inductively that $F^n(H_0) \subset H_0 + nu$.
	From this one deduces that 
	$$\Big\langle \frac{F^n(z)-z}{n}, v \Big\rangle \leq \langle u,v\rangle + \frac{c - \langle z, v\rangle}{n}.$$
	And conclusion follows easily using the definition of rotation set (noting that $F^n(z)-z$ is $\Z^2$-periodic).
\end{proof}

To prove (i), we consider for $c\in \R$ the line $L_c = \R\times \{c\}$. We claim that if $c=1/8$, then $F_{1,1}^2(L_c)$ lies strictly on the left of $L_{c+2}$. In other words $\pi_2(F_{1,1}^2(z))<c+2$ for all $z\in L_c$. To see this it suffices to compute explicitly $$\pi_2 (F_{1,1}^2(x, c)) - c = s(x) + s(x+s(c+s(x)))\leq 2,$$ since the maximum value of $s$ is $1$, and moreover equality may only hold if $x$ and $x+s(c+s(x))$ are both maxima of $s$, which means they are both equal to $1/4$ mod $1$. However $x=1/4$ mod $1$ implies $x+s(c+s(x)) = 1/4+s(c+1)$ mod $1$, and therefore this is only possible if $s(c+1) = 0$ mod $1$. In particular $c=1/8$ does not satisfy this property, and therefore $\pi_2 (F_{1,1}^2(x, c)) - c<2$ for all $x\in \R$. The proposition above applied to $F_{1,1}^2$, using $v = (0,1)$ and $u=(0,2)$, implies that $\rho(F_{1,1}^2)$ is contained in the half-plane $\{(x,y):y\leq 2\}$. 
Since $\rho(F_{1,1}^2) =2\rho(F_{1,1})$, it follows that $\max \pi_2(\rho(F_{1,1}))\leq 1$. Furthermore, since $x\mapsto\pi_2(F_{1,1}^2(x,c))-c$ is $1$-periodic, its maximum value is strictly smaller than $2$, and this remains valid if one perturbs $F_{1,1}$. Therefore $\max \pi_2(\rho(F_{\alpha,\beta}))\leq 1$ for all $(\alpha,\beta)$ close enough to $(1,1)$. The symmetries (\ref{eq:sym-axes}) and (\ref{eq:sym-R}) then imply that $\rho(F_{\alpha,\beta}) \subset [-1,1]\times [-1,1]$ if $(\alpha, \beta)$ is close enough to $(1,1)$. Finally, as observed at the begining of \S\ref{sec:apriori}, if $\alpha,\beta\geq 1$ then $\rho(F_{\alpha,\beta})$ contains the four vertices of this square, and therefore the rotation set is exactly the square $[-1,1]^2$, which proves (i).

In order to prove (ii), letting $F = F_{1/2,1/2}$, we verify that the line $L = \{(x,y) : x+y = 0\}$ is mapped strictly below $L+(2,0)$ by $F^4$. In other words, letting $\phi(x,y)=x+y$, one has $\phi(F^4(x,-x)) <2$ for all $x\in \R$. This can be verified by numerical (but certifiable) methods. Indeed, from the derivative (\ref{eq:jacob}) one may see that the Lipschitz constant of $F^4$ is less than $(\pi^2+2)^4 < 20000$, and a numerical computation (with error estimates) of $F^4(x,-x)$ for $x\in [0,1]$ with a step of $10^{-6}$ finds a maximum value of less than $1.95$ for $\phi(F^4(x,-x))$, therefore bounding the actual maximum by $20000\cdot 10^{-6} + 1.95 < 2$. The previous proposition with $v = (1,1)$, $c=0$, and $u=(2,0)$ implies that $\rho(F^4)\subset \{(x,y):x+y\leq 2\}$.  
Again, this persists under small perturbations of $F$, and using the fact that $\rho(F^4)=4\rho(F)$ we see that if $(\alpha,\beta)$ is close enough to $(1/2,1/2)$ one has $\rho(\Fab)\subset \{(x,y) : x+y \leq 1/2\}$, and by the symmetries of the rotation set we see that (close enough to $(1/2,1/2)$) the rotation set $\rho(\Fab)$ is contained in the square $Q$ described in (ii). Again, the remarks at the beginning of Section \ref{sec:apriori} imply that when $\alpha,\beta\geq 1/2$ the vertices of $Q$ belong to $\rho(\Fab)$, so (ii) follows.

\section{Continuity of the rotation set for Hamiltonian lifts of torus homeomorphisms} \label{RotationSetContinuity}

In this section we prove a general result which in particular implies the continuous dependence of the rotation set of $\Fab$ on the parameters
$(\alpha,\beta)$:
\begin{thm} \label{t.continuity}
The mapping
\[ 
 F\ \mapsto \ \rho(F)
\]
is continuous on $\hamlifts(\torus)$ (with respect
to the $\cC^0$-topology on $\hamlifts(\torus)$ and the Hausdorff
metric on the space of compact subsets of $\R^2$).
\end{thm}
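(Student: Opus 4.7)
The plan is to supplement the upper semicontinuity of $\rho$ already recorded in Proposition~\ref{p.rotset_semi} by proving lower semicontinuity at every $F\in\hamlifts(\T^2)$. Since $\|F_n-F\|_\infty\to 0$ keeps all $\rho(F_n)$ inside a fixed compact subset of $\R^2$, Hausdorff convergence follows once we show that every $v\in\rho(F)$ is a limit of points $v_n\in\rho(F_n)$. Applying Theorem~\ref{t.hamiltonian_LCT} (when the fixed-point set of $f$ is contained in a topological disk) and Proposition~\ref{p.fix-essential} (otherwise) splits the problem into three cases depending on the shape of $\rho(F)$: either it is the singleton $\{(0,0)\}$, a nondegenerate segment $\{tu:t\in[a,b]\}$ with $u\in\Z^2$ primitive passing through the origin, or a set with nonempty interior. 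The last case is covered by Proposition~\ref{p.rotset_cont_int}, while the singleton case follows because the Hamiltonian hypothesis forces $0=\rho_\lambda(F_n)\in\rho(F_n)$ for every $n$, so upper semicontinuity alone pins $\rho(F_n)$ down to $\{(0,0)\}$.

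The substantive case is the segment. Conjugating by a matrix $A\in\mathrm{GL}(2,\Z)$ with $Au=e_1$ and using $\rho(AFA^{-1})=A\rho(F)$ from Proposition~\ref{p.rotset}(2), we may reduce to $\rho(F)=[a,b]\times\{0\}$ with $a\le 0\le b$ and $a<b$. Since every $\rho(F_n)$ is convex and contains the origin, lower semicontinuity is equivalent to establishing $\liminf_n\sup\pi_1\rho(F_n)\ge b$ together with its symmetric counterpart for $a$. For each rational $p/q\in(a,b)$, Proposition~\ref{p.reali}(3) supplies a periodic point $z_{p,q}\in\R^2$ with $F^q(z_{p,q})=z_{p,q}+(p,0)$, yielding a finite-time identity that I would then aim to transfer to the approximating maps $F_n$.

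The main obstacle is the gap between this finite-time identity and an asymptotic rotation vector: pointwise continuity only gives $F_n^q(z_{p,q})\to z_{p,q}+(p,0)$, but iterates beyond time $q$ under $F_n^q$ may drift arbitrarily, so one cannot conclude directly that $(p/q,0)\in\liminf\rho(F_n)$. I expect to bridge this gap by exploiting the \emph{interiority} of $p/q$ in the segment: Proposition~\ref{p.reali}(3) also furnishes periodic orbits for rationals slightly above and below $p/q$, and these orbits create a linking pattern in the universal cover that is preserved under Hamiltonian $C^0$-perturbations by area preservation. Making this rigorous should go through the maximal-isotopy and forcing-theoretic machinery of \cite{LeCalvezTal2015ForcingTheory} already underlying Theorem~\ref{t.hamiltonian_LCT}, extracting, for large $n$, orbits of $F_n$ whose $q$-step horizontal displacement stays close to $p$; concatenating such blocks yields an element of $\rho(F_n)$ close to $(p/q,0)$. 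Letting $p/q\nearrow b$ along rationals, and arguing symmetrically for $a$, then produces the required Hausdorff convergence.
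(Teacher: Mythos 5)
You correctly split the problem by the shape of $\rho(F)$ using Proposition~\ref{p.rotset_semi}, Proposition~\ref{p.rotset_cont_int}, Proposition~\ref{p.fix-essential}, and Theorem~\ref{t.hamiltonian_LCT}, correctly reduce to the nondegenerate segment case, and correctly identify the genuine obstacle: the finite-time identity $F^q(z_{p,q})=z_{p,q}+(p,0)$ does not survive $\cC^0$ perturbation in any useful asymptotic sense. But the step you propose to bridge this gap --- that periodic orbits for nearby rationals ``create a linking pattern in the universal cover that is preserved under Hamiltonian $\cC^0$-perturbations,'' to be rigorized through the forcing machinery of \cite{LeCalvezTal2015ForcingTheory} --- is left as an expectation. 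No linking invariant is defined, no stability statement under $\cC^0$ perturbation is proved, and it is far from clear that such a mechanism exists in the generality needed, since $\cC^0$ perturbations can scramble the homotopical pattern traced by individual orbits. This is the missing idea, so the proposal has a real gap at its central point; in addition, even granting ``orbits of $F_n$ whose $q$-step horizontal displacement stays close to $p$,'' concatenating such blocks does not by itself produce a point with well-controlled asymptotic rotation vector unless the block is a genuine periodic orbit.

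The paper's resolution (Proposition~\ref{pr:maincontinuity}) is both more elementary and logically inverted, and avoids forcing theory entirely. After passing to a power so that $v,w\in\Z^2$, one does not try to propagate $F$'s periodic orbit to a nearby $G$; instead, one \emph{modifies $G$} by a $\Z^2$-periodic, area-preserving homeomorphism $H$ supported on small balls around $z_0+w+\Z^2$, so that $G' = HG$ has $z_0$ as a fixed point of $G'-w$ realizing the endpoint vector $w$. Since $H$ has zero mean rotation vector, $G'$ is still a Hamiltonian lift, hence $(0,0)\in\rho(G')$ and by convexity $I_w\subset\rho(G')$. The area-preserving realization theorem, Proposition~\ref{p.reali}(4), then produces a point $z_1$ with $G'(z_1)=z_1+v$. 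The decisive observation is that $z_1$ must lie outside the support of $H$: if $z_1$ were within $\eps$ of some integer translate of $z_0$, then $\|G'(z_0)-G'(z_1)\|\geq\|w-v\|-\eps>3/4$, contradicting the bound on the diameter of $G(B_\eps(z_0))$. Hence $G(z_1)=G'(z_1)=z_1+v$ and $v\in\rho(G)$. This replaces the heavy machinery you invoke with the classical Franks-type realization results already packaged in Proposition~\ref{p.reali}, applied not to $F$ but to the engineered perturbation $G'$.
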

\medskip

In order to prove Theorem~\ref{t.continuity}, let us first explain how existing
results can be used to reduce the problem to the case when
$F\in\hamlifts(\torus)$ has a rotation set of the form
\begin{equation}
  \label{e.rational_rotation_set}
  \rho(F) \ = \ \{tu\mid a\leq t \leq b\} \ , \quad \textrm{ where } u\in\Z^2
  \textrm{ and } a \leq 0 < b \ .
\end{equation}

Since the map $F\mapsto \rho(F)$ is upper-semicontinuous (Proposition \ref{p.rotset_semi}), when $\rho(F)$ is a singleton the continuity at $F$ follows. Likewise, when
$\rho(F)$ has nonempty interior, continuity at $F$ follows from
Proposition \ref{p.rotset_cont_int}.  Thus, we can assume that $\rho(F)$ is a nondegenerate line segment. Putting together Proposition \ref{p.fix-essential} and Theorem \ref{t.hamiltonian_LCT} we deduce that this segment must have rational slope. Since it also contains the origin (because $F$ is a Hamiltonian lift) it must be exactly of the form (\ref{e.rational_rotation_set}). \medskip

Hence, suppose from now on that $F\in\hamlifts(\torus)$ satisfies
(\ref{e.rational_rotation_set}). Fix $\eps>0$, and let $v,v'$ be two rational vectors (i.e., in $\Q^2$),
in $\rho(F)$ which are $\eps$-close to the endpoints. If we can show
that $v,v'\in\rho(G)$ for any sufficiently small perturbation $G$ of $F$ in
$\hamlifts(\torus)$, then by the convexity of the rotation set the whole segment
between $v$ and $v'$ will be contained in $\rho(G)$. This yields the lower
semicontinuity of $F\mapsto \rho(F)$, which together with the upper semicontinuity from Proposition \ref{p.rotset_semi} implies the continuity of this map. Therefore, it suffices to prove the following statement, which can be
applied to two pairs of rational points $w,v$ and $w',v'$ chosen close enough to
the endpoints of $\rho(F)$:

\begin{prop}\label{pr:maincontinuity}
Suppose that the rotation set of $F\in \hamlifts(\torus)$ is a line segment
containing $w\in \rho(F)\cap \Q^2\sm \{(0,0)\}$. Then, for each $v\in \Q^2$ lying in
the interval $I_w = \{tw \mid t\in (0,1)\}$, there exists a neighborhood $\mathcal
U$ of $F$ in $\hamlifts(\torus)$ such that every $G\in \mathcal U$ satisfies
$v\in \rho(G)$.
\end{prop}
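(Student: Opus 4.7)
The plan is to combine the area-preserving realization theorem with a forcing-theoretic persistence argument. Writing $v = p/q$ with $p\in\Z^2$ and $q\in\N$, we first note that since $F$ is Hamiltonian, $\rho_\lambda(F)=(0,0)$, and since $v$ lies on the segment from $0$ to $w\in\rho(F)$, the vector $v$ belongs to the convex hull of $\rho(F)\cup B_\epsilon(\rho_\lambda(F))$ for every $\epsilon>0$. Hence Proposition~\ref{p.reali}(4) yields $z_0\in\R^2$ with $F^q(z_0) = z_0+p$, so $v$ is realized by a periodic orbit of $F$. The same reasoning applies to any rational vector lying between $0$ and $w$; in particular, one can fix a rational $v' = p'/q'$ strictly between $v$ and $w$ on the segment from $0$ to $w$ and obtain a periodic orbit of $F$ realizing $v'$.

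The persistence step then amounts to showing that for $G$ sufficiently close to $F$ in $\hamlifts(\torus)$, the rotation set $\rho(G)$ contains some point $w_G$ lying on the ray from $0$ through $w$ and strictly beyond $v$. Once this is achieved, the convexity of $\rho(G)$ combined with $0\in\rho(G)$ (from $G$ being Hamiltonian) implies that the whole segment from $0$ to $w_G$ lies in $\rho(G)$, and in particular $v\in\rho(G)$. The natural candidate for $w_G$ is a rotation vector close to $v'$, witnessed by a periodic orbit of $G$ near the one realizing $v'$.

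The main obstacle lies in establishing this persistence. A generic area-preserving periodic orbit may have zero fixed-point index and can be destroyed by arbitrarily small $\cC^0$-perturbations, so the realization theorem of Proposition~\ref{p.reali} alone is insufficient. The natural remedy is to invoke the forcing theory of Le Calvez and Tal \cite{LeCalvezTal2015ForcingTheory}: the coexistence within $F$ of a fixed point (realizing rotation vector $0$) and a periodic orbit realizing $v'$ produces, via linked transverse trajectories in the universal cover, a topologically robust configuration that cannot be broken by small $\cC^0$-perturbations and that forces any sufficiently close $G$ to admit a periodic orbit whose rotation vector lies arbitrarily close to $v'$. Carrying out this forcing argument rigorously, and verifying the robustness of the required linking (in particular showing that the separation of rotation vectors $0$ and $v'$ for $F$ produces a dynamically inescapable obstruction to the rotation vector of some orbit of $G$ dropping back below $v$ along the segment direction), is the crux of the proof.
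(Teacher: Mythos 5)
Your proposal correctly locates the crux of the matter — a single area-preserving periodic orbit can be destroyed by arbitrarily small $\cC^0$-perturbations, so Proposition~\ref{p.reali} applied to $F$ alone is not enough — but it does not actually overcome it. You gesture at Le Calvez--Tal forcing theory and linked transverse trajectories to produce a robust configuration, but you acknowledge yourself that ``carrying out this forcing argument rigorously \ldots is the crux of the proof,'' and that is precisely where the proof stops. Forcing theory does force a lot of dynamics from a genuine transverse linking of trajectories, but the existence and the $\cC^0$-persistence of that linking for the nearby map $G$ is exactly the kind of stability statement that needs justification; one cannot simply assert it. As it stands, this is an outline with a hole in the center, not a proof.

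The paper sidesteps the persistence issue entirely by a simple and elegant trick that your outline misses: rather than trying to track a periodic orbit of $F$ into $G$, one modifies $G$ itself. After replacing $F$ by a power so that $v,w\in\Z^2$, Proposition~\ref{p.reali}(3) gives a fixed point $z_0$ of $F$ with $F(z_0)=z_0+w$. For $G$ in a suitable small neighborhood $\cU$, one post-composes $G$ with a compactly supported area-preserving homeomorphism $H$, supported in $\bigcup_{\nu\in\Z^2}\bigl(B_\epsilon(z_0+w)+\nu\bigr)$, chosen so that $G':=HG$ again satisfies $G'(z_0)=z_0+w$. Since $G'\in\hamlifts(\torus)$, it has $0$ and $w$ in its rotation set, so by convexity $I_w\subset\rho(G')$, and Proposition~\ref{p.reali}(4) applied to $G'$ (not to $F$) produces $z_1$ with $G'(z_1)=z_1+v$. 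A short diameter estimate, using $\|w-v\|\ge 1$ and the smallness of $G(B_\epsilon(z_0))$, shows that $z_1$ lies outside the support of $H$, hence $G(z_1)=G'(z_1)=z_1+v$ and $v\in\rho(G)$. No forcing theory, no stability of periodic orbits of $F$ — the realization theorem is invoked a second time, for a surgically corrected version of $G$, and the geometry localizes the correction away from the realizing orbit. You should look for a version of this ``repair $G$, then realize, then observe the realizing orbit avoids the repair'' mechanism rather than trying to prove $\cC^0$-robustness of linking.
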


\begin{proof}
Since $\rho(F^n)=n\rho(F)$, we can replace $F$ by an adequate power to assume
that $v$ and $w$ have integer coordinates. Note that since $n$ depends on (the
denominators of) $v$ and $w$, so does the neighbourhood $\cU$ chosen below.

Hence, we assume $v,w\in \Z^2$.  Since $F$ is a lift of an area-preserving
homeomorphism of $\T^2$ and its rotation set is a segment containing $w$, by Proposition \ref{p.reali}(3) there exists $z_0$ such that
$F(z_0) = z_0+w$.  Fix $0< \epsilon < 1/4$ such that $F(B_\epsilon(z_0))$ has
diameter smaller than $1/4$. Let $\mathcal U$ be a neighbourhood of $F$ in
$\hamlifts(\torus)$ with the property that every element $G\in \mathcal U$ is such that
$G(z_0)\in B_\epsilon(z_0+w)$ and $G(B_\epsilon(z_0))$ has diameter smaller than
$1/4$. We claim that $v\in \rho(G)$ for any such $G$.  To show this, we consider
an area-preserving homeomorphism $h$ defined on $B_\epsilon(z_0+w)$ which is the
identity in the boundary of the disk and such that $h(G(z_0)) = z_0+w$. We
extend $h$ to a homeomorphism $H\in \hamlifts(\torus)$ by $H(z) = h(z-\nu)+\nu$
if $z\in B_\epsilon(z_0+w)+\nu$ for some $\nu\in \Z^2$, and $H(z) = z$
otherwise. One easily verifies that $H\in \hamlifts(\torus)$, and $G' : = HG$
satisfies $G'(z_0) = z_0+w$.

Thus $w\in \rho(G')$, and since $G'$ is a Hamiltonian lift, we also have $(0,0)\in \rho(G')$. By convexity this implies $I_w\subset \rho(G')$. Moreover, since the mean rotation vector of Lebesgue measure for $G'$ is $(0,0)$, Proposition \ref{p.reali}(4) implies that there exists $z_1\in \R^2$ such that
$G'(z_1) = z_1 + v$. If $z_1\in B_\epsilon(z_0)$, then
$$\|G'(z_0)-G'(z_1)\| = \|(z_0+w) - (z_1-v)\| \geq \|w-v\| - \|z_0-z_1\| \geq
1-\epsilon > 3/4 \ ,$$ which contradicts the fact that $G'(B_\epsilon(z_0)) =
G(B_\epsilon(z_0))$ has diameter at most $1/4$. Thus $z_0\notin
B_\epsilon(z_0)$, and the same argument applied to its integer translations show
that $z_1\notin \bigcup_{u\in \Z^2} B_\epsilon(z_0)+u$. Since $G'$ coincides
with $G$ outside of this set, we conclude that $G(z_1) = G'(z_1) = z_1+v$. In
particular $v\in \rho(G)$ as claimed.
\end{proof}

\begin{rem}
  We note that the above argument can also be modified in order to give an
  alternative and elementary proof of \cite[Theorem
    B]{misiurewicz/ziemian:1991} (the continuity of $F\mapsto \rho(F)$ when $\rho(F)$ has nonempty interior). In order to show the persistence of a rotation
  vector $v\in\inte(\rho(F))\cap \Q^2$, it suffices to choose three rational
  vectors $w_1,w_2,w_3 \in \rho(F)$ such that $v$ is contained in the interior
  of their convex hull and to repeat the above construction simultaneously for
  three fixed points $z_1,z_2,z_3$ realising $w_1,w_2,w_3$, respectively.
\end{rem}

\section{The cusp along the diagonal} \label{Cusp}

In this section, we concentrate on parameters on or close to the diagonal, with
the aim to verify (in a qualitative way) the cusp form of the set $\cN$ in this
region. First, we note from (\ref{eq:sym-R}) that the rotation set of $\rho(F_{\alpha,\alpha})$ is invariant under the rotation by angle $\pi/2$, and therefore it cannot be a line segment of positive length. Hence, Corollary \ref{c.diagonal}, which states that the rotation set always has non-empty interior on the diagonal (excluding the origin) is an immediate consequence of Theorem~\ref{t.irrotational}.

\subsection{Absence of irrotational dynamics for $(\alpha,\beta)\neq 0$: Proof of Theorem~\ref{t.irrotational}}\label{sec:irrotational}

A torus homeomorhism $f\in\homeo_0(\torus)$ is called {\em irrotational} if it
has a lift $F$ that satisfies $\rho(F)=\{(0,0)\}$. In this case, we also say the lift $F$ is irrotational. The aim of
this section is to show that for the kicked Harper map this case can only occur
when $\alpha=\beta=0$, which is the statement of Theorem~\ref{t.irrotational}.

When $\alpha=0$ and $\beta\neq 0$ or vice versa, then it is obvious that
$\rho(\Fab)$ is a non-degenerate segment. Hence it remains to consider the case 
$\alpha\beta\neq 0$. For such parameters, as discussed in
Section~\ref{StandardFamily}, the fixed points of $\Fab$ are all elementary (i.~e. $1$ is not an eigenvalue of the derivative at these points). As $\Fab$ extends to a biholomorphic
mapping of $\C^2$, we know due to Ushiki's Theorem
\cite[p. 289]{HasselblattKatok2002Handbook} that $\Fab$ does not admit any
saddle connections between hyperbolic fixed points. Therefore,
Theorem~\ref{t.irrotational} is a consequence of the following more general
result on irrotational Hamiltonian torus homeomorphisms.
\begin{thm}\label{t.irrotatoinal_hamiltonians} Suppose that $f$ is a Hamiltonian torus diffeomorphism with a lift $F\in\hamlifts(\torus)$ such that $\rho(F) = \{(0,0)\}$.
Then $F$ either has a non-elementary fixed point, or it admits a saddle connection between hyperbolic fixed points. 
\end{thm}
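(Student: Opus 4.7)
The approach is to argue by contrapositive. I assume that every fixed point of $F$ is elementary and that there is no saddle connection between hyperbolic fixed points of $F$ in the lift $\R^2$, and then aim to show that under these hypotheses $\rho(F)$ cannot reduce to $\{(0,0)\}$. The main tool will be the equivariant transverse foliation theory of Le Calvez (which underlies Theorem~\ref{t.hamiltonian_LCT} via \cite{LeCalvezTal2015ForcingTheory}). Since $F$ is a Hamiltonian lift, every fixed point of $F$ on $\R^2$ projects to a fixed point of $f$ whose trajectory in any isotopy from $\id$ to $f$ lifts to a closed loop at that point, hence is a contractible fixed point of the isotopy. A maximal isotopy $I$ can therefore be chosen so that the isotopy fixed point set $X = \mathrm{fix}(I)$ coincides with the projection of $\mathrm{fix}(F)$; by elementarity (which implies isolatedness) this set is finite.

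My second step would be to analyse the associated oriented singular foliation $\cF$ on $\T^2 \sm X$ transverse to every $I$-trajectory. The elementarity of each fixed point of $F$ provides a local normal form and, combined with the uniqueness properties of the transverse foliation near an isolated fixed point, gives the local model of $\cF$ at each singularity: a centre at each elliptic fixed point, and a saddle with four separatrices at each hyperbolic one, whose separatrix leaves are asymptotic in a topological sense to the local stable and unstable directions of $F$. The key semi-local claim to extract is that any leaf of $\cF$ joining two hyperbolic singularities of $\cF$, possibly identified via a deck transformation, forces the existence of a corresponding saddle connection of $F$ between the associated hyperbolic fixed points. Our standing hypothesis therefore rules out any such leaf in $\R^2 \sm \pi^{-1}(X)$.

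In the third step I would convert this rigidity into global structural information about $\cF$. Each separatrix of a hyperbolic singularity must now either limit on a centre, close up on itself, or accumulate on a nontrivial minimal set of $\cF$. Combined with the Poincar\'e--Hopf identity $\sum_{p\in X}i_p(\cF)=\chi(\T^2)=0$ and the absence of saddle-to-saddle separatrix connections in the universal cover, I would argue that $\cF$ must contain a homotopically nontrivial closed leaf $\gamma$ in $\T^2$. Lifting $\gamma$ to $\R^2$ yields a properly embedded line invariant under translation by some $v\in \Z^2\sm\{(0,0)\}$, and the positive transversality of the $F$-orbits to $\cF$ (through the isotopy $I$) then produces a definite drift in the direction of $v$ for any orbit crossing a fundamental piece of $\gamma$, contributing a nonzero element to $\rho(F)$ and contradicting the irrotationality of $F$.

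The main obstacle I expect is the semi-local step: making precise the identification between separatrices of $\cF$ at a hyperbolic singularity and the local stable/unstable manifolds of $F$, so that a separatrix connecting two hyperbolic singularities really forces a dynamical saddle connection. It is exactly here that the elementarity assumption (as opposed to mere isolatedness) is essential, since at a non-elementary isolated fixed point the local transverse foliation can develop petal or flower structures that decouple its separatrices from the dynamical invariant manifolds. A secondary source of care is bookkeeping in the universal cover: a separatrix of $\cF$ that joins two different lifts of the same torus hyperbolic fixed point still counts as a saddle connection of $F$, even though on $\T^2$ it projects to a homoclinic loop, and this case must be treated on the same footing as genuinely heteroclinic configurations in order to close the argument.
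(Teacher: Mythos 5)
Your proposal takes a genuinely different route from the paper, and the key step you flag yourself as ``the main obstacle'' is in fact a real gap rather than a technicality to be tidied up.

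The paper's proof does not go through the transverse foliation at all. It instead uses the irrotationality together with elementarity to deduce that $\mathrm{fix}(f)$ is finite, invokes Theorem~\ref{t.hamiltonian_LCT} to get uniformly bounded displacements, and then builds the $F$- and $\Gamma_1$-invariant simply connected open set $H$ (the filled saturation of a lower half-plane). Projecting to the open annulus, compactifying to $\mathbb{S}^2$, and producing a finite invariant measure of full support, the argument reduces to a prime-ends structure theorem of Koropecki--Le Calvez--Nassiri: for an area-preserving sphere homeomorphism with an invariant open disk whose prime-end rotation number vanishes and whose boundary fixed points are nondegenerate, the boundary must consist entirely of hyperbolic fixed points and saddle connections (or contain a degenerate point). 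That theorem hands over exactly the dichotomy of the statement, with no need to compare leaves of a foliation to invariant manifolds.

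The gap in your proposal is precisely the ``semi-local claim'' that a leaf of the transverse foliation $\cF$ joining two hyperbolic singularities of $\cF$ forces a dynamical saddle connection of $F$. There is no such implication. The separatrices of a Brouwer--Le Calvez foliation singularity are topological objects constrained only by transversality to the isotopy trajectories and by the Lefschetz index; they are not, and in general cannot be, the stable and unstable manifolds of $F$. Indeed, invariant manifolds of an area-preserving saddle typically develop homoclinic tangles and oscillate in a way incompatible with being leaves of any foliation, whereas the transverse foliation's separatrices remain tame. So a saddle-to-saddle leaf of $\cF$ is perfectly consistent with the absence of any dynamical saddle connection, and the contrapositive cannot be run as written. (Elementarity does not repair this: it pins down the Lefschetz index and hence the local topological type of the foliation singularity, but it still provides no identification between foliation separatrices and $W^{s/u}(F)$.)

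There is a secondary gap in the global step as well: even if one ruled out saddle-to-saddle leaves of $\cF$, the passage from Poincar\'e--Hopf to a homotopically nontrivial closed leaf needs a Poincar\'e--Bendixson-type analysis that accounts for contractible closed leaves bounding disks of centres and for possible exceptional minimal sets, and a nontrivial closed leaf does not by itself contradict $\rho(F)=\{(0,0)\}$ without controlling the return times between successive crossings of its $\Z^2$-translates (the transverse drift could a priori be sublinear). The paper sidesteps all of this by working on the filled invariant half-plane and delegating the delicate boundary analysis to the prime-ends theorem.
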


\begin{proof}
Assume that every fixed point of $F$ is elementary. Since $\rho(F)=\{(0,0)\}$, every fixed point of $f$ is lifted to a fixed point of $F$, and since elementary fixed points are isolated, $f$ has finitely many fixed points. This implies that the fixed point set of $f$ is inessential (contained in some topological open disk in \torus), and Theorem \ref{t.hamiltonian_LCT} implies that $F$ has uniformly bounded displacements\footnote{in the special case where $F=F_{\alpha,\beta}$, this follows from Proposition \ref{p.diffusion_threshold}, and \cite{LeCalvezTal2015ForcingTheory} is not necessary.} , that is, we have 
\begin{equation}
\sup_{z\in \R^2, n\in \Z} \|F^n(z) - z\| \ < \ \infty \ .
\end{equation}
Let $H_0$ be the half-plane $\R\times (-\infty,0)$, and consider $H_1 =
\bigcup_{n\in \Z} F^n(H)$, which is $F$-invariant and $\Gamma_1$-invariant where
$\Gamma_1(x,y) = (x+1,y)$. Let $H$ be the union of $H_1$ with all bounded
connected components of the complement of $H_1$. Then $H$ is still $F$- and
$\Gamma_1$-invariant, bounded from above, and moreover it is a simply connected
open set. Let $U_0$ denote the projection of $H$ to the annulus $\A =
\kreis\times\R \simeq \R^2/\langle \Gamma_1\rangle$. The map $F$ induces a homeomorphism $\tilde F\colon \A\to \A$
which leaves $U_0$ invariant and commutes with the map $\tilde\Gamma_2\colon \A\to \A$
induced by the corresponding translation $\Gamma_2:(x,y)\mapsto (x,y+1)$ of $\R^2$.

We note that $\tilde F$ preserves some finite non-atomic measure $\mu$ of full
support. This can be seen by noting that the sets $A_k =
\Gamma_2^k(\Gamma_2(U_0)\sm U_0)$ are bounded, invariant, and $\tilde F$
preserves the measure $\mu_k$ given by the Lebesgue measure restricted to the
interior of $A_k$. Note that the boundary of each $A_k$ is a closed nowhere dense set, which implies that $\bigcup_{k\in\Z}\inte(A_k)$ is dense. Since each $\mu_k$ is finite (and $\mu_k(A_k)$ does not
depend on $k$), letting $\mu = \sum_{k\in \Z} 2^{-|k|}\mu_k$ we obtain a finite
$\tilde F$-invariant measure of full support.

Let $\mathbb{S}^2 = \A \cup\{+\infty, -\infty\}$ be the usual compactification
of $\A$ by topological ends (where $+\infty$ is the end on which $U$ does not
accumulate), and $U = U_0 \cup \{-\infty\}$, which is an open topological
disk. Extending $\tilde F$ (by fixing $\pm \infty$) we have an
orientation-preserving homeomorphism of $\mathbb{S}^2$ which leaves invariant
the open topological disk $U$.  The fact that the original map is irrotational
implies that the prime ends rotation number of $\tilde F$ in $U$ is $0$ (this
follows, for instance, from \cite[Props. 5.3-5.4]{franks/lecalvez:2003}, or more
directly from \cite{matsumoto2010rotation}).  Moreover, in a neighborhood of
$\bd U$, the fixed points of $\tilde F$ are elementary
(because, as elements of $\A$, they are projections of fixed points of
$F$). Since $\tilde F$ also preserves a finite measure of full support in
$\mathbb{S}^2$, it follows from Theorem 1.4 of
\cite{KLN2}
that $\bd U$
either contains a degenerate fixed point or consists entirely of hyperbolic
fixed points and saddle connections.
\end{proof}

\subsection{Pinching at the origin: Proof of Theorem~\ref{t.pinching}}
 We already know from Corollary \ref{c.diagonal} that the
set $\cN$ of parameters where the rotation set has nonempty interior includes
$\{(\alpha,\alpha) : \alpha\neq 0\}$, and therefore a neighborhood of the latter
set (since $\cN$ is open).  On the other hand, Figure~\ref{f.emptyint} suggests
that the set of parameters with nonempty interior (in the first quadrant) has
a cusp shape at the origin; that is, every line through the origin other than
the diagonal contains an interval around the origin where the rotation set has
empty interior.  This is confirmed by
Theorem~\ref{t.pinching}. We slightly reformulate the latter and prove the
following statement. Note that due to the symmetries described in
Section~\ref{StandardFamily}, it suffices to consider parameters below the
diagonal.

\begin{thm}\label{th:diagonal} For every $0\le\lambda<1$ there exists $\alpha_0=\alpha_0(\lambda)>0$
  such that for every $\alpha \in[0,\alpha_0(\lambda)]$ the rotation set of
  $F_{\alpha, \lambda \alpha}$ has empty interior.  Moreover, $\alpha_0 : [0,1)\to
    (0,+\infty)$ can be chosen continuous.
\end{thm}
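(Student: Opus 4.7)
The plan is to construct, for each $\lambda\in[0,1)$ and sufficiently small $\alpha>0$, an essential horizontal $F_{\alpha,\lambda\alpha}$-invariant circle. Such a circle lifts to the graph of a $1$-periodic function $\R\to\R$, and its integer translates partition $\R^2$ into $F_{\alpha,\lambda\alpha}$-invariant horizontal strips of height $1$, so that $|\pi_2(F_{\alpha,\lambda\alpha}^n(z)-z)|<1$ for all $z\in\R^2$ and $n\in\Z$. Proposition~\ref{p.diffusion_threshold} then gives $\pi_2(\rho(F_{\alpha,\lambda\alpha}))=\{0\}$, and together with Proposition~\ref{p.rotset_shapes} this forces $\rho(F_{\alpha,\lambda\alpha})$ to have empty interior.

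The integrable model for such circles is the Hamiltonian
\[
H_\lambda(x,y) \ = \ -\tfrac{1}{2\pi}\cos(2\pi y)+\tfrac{\lambda}{2\pi}\cos(2\pi x),
\]
since $H_\alpha$ and $V_{\lambda\alpha}$ are respectively the time-$\alpha$ and time-$\lambda\alpha$ maps of the Hamiltonian flows of its two summands. The Baker--Campbell--Hausdorff formula yields $F_{\alpha,\lambda\alpha}=\Phi^{H_\lambda}_\alpha+O(\alpha^2)$ in the $C^\infty$-topology, so that $F_{\alpha,\lambda\alpha}$ is a symplectic time-$\alpha$ integrator for $H_\lambda$. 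For $\lambda\in[0,1)$, $H_\lambda$ has non-degenerate saddles at $(0,0)$ and $(1/2,1/2)$ with critical values $\pm(1-\lambda)/(2\pi)$, and extrema at $(0,1/2)$ and $(1/2,0)$. For every value $c$ strictly between the two saddle values, the level set $\{H_\lambda=c\}$ consists of two disjoint smooth essential horizontal circles (each a graph over the $x$-axis). In a collar neighbourhood of such a circle, action-angle coordinates $(\theta,I)$ conjugate $H_\lambda$ to $K_\lambda(I)$ and the flow to $(\theta,I)\mapsto(\theta+tK_\lambda'(I),I)$; since the period along these level sets diverges as $c$ approaches a saddle value, $K_\lambda'$ is non-constant, so $K_\lambda''\neq 0$ on a dense set of actions.

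To apply Theorem~\ref{t.KAM} I would pass to the iterate $F_{\alpha,\lambda\alpha}^N$ with $N=\lfloor 1/\alpha\rfloor$. Backward error analysis for symplectic integrators produces a smooth Hamiltonian $\tilde H_{\lambda,\alpha}=H_\lambda+O(\alpha)$ such that $F_{\alpha,\lambda\alpha}=\Phi^{\tilde H_{\lambda,\alpha}}_\alpha$ modulo errors of arbitrarily high order in $\alpha$ on any bounded region; hence $F_{\alpha,\lambda\alpha}^N$ is $C^\infty$-close to $\Phi^{\tilde H_{\lambda,\alpha}}_1$, which in action-angle coordinates of $\tilde H_{\lambda,\alpha}$ is an integrable twist $(\theta,I)\mapsto(\theta+\tilde K_{\lambda,\alpha}'(I),I)$. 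Choosing $I_0$ with $\tilde K_{\lambda,\alpha}'(I_0)$ Diophantine and $\tilde K_{\lambda,\alpha}''(I_0)\neq 0$, Theorem~\ref{t.KAM} furnishes, for all sufficiently small $\alpha$, a smooth essential invariant circle $\Gamma$ of $F_{\alpha,\lambda\alpha}^N$ with rotation number $\tilde K_{\lambda,\alpha}'(I_0)$. Since $F_{\alpha,\lambda\alpha}$ commutes with $F_{\alpha,\lambda\alpha}^N$, the image $F_{\alpha,\lambda\alpha}(\Gamma)$ is another invariant circle of $F_{\alpha,\lambda\alpha}^N$ with the same Diophantine rotation number and lies $C^\infty$-close to $\Gamma$; the local uniqueness of Diophantine KAM circles forces $F_{\alpha,\lambda\alpha}(\Gamma)=\Gamma$.

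All estimates above (the phase portrait of $H_\lambda$, the Diophantine and twist constants at $I_0$, and the KAM smallness threshold) depend continuously on $\lambda$ on every compact subinterval of $[0,1)$, since non-degeneracy of the saddles is lost only in the limit $\lambda\to 1$. Hence $\alpha_0$ may be chosen positive and continuous on $[0,1)$. \textbf{The main technical obstacle} is the control of the iteration error over $N\sim 1/\alpha$ steps: a naive Gronwall estimate amplifies the $O(\alpha^2)$ one-step error exponentially in $N$, so the proof requires backward error analysis (or a normal-form argument) to exploit the symplectic structure and gain the necessary order in $\alpha$. A secondary issue is the verification of the twist condition for $H_\lambda$ and the local uniqueness of Diophantine KAM circles used to pass from $F_{\alpha,\lambda\alpha}^N$-invariance to $F_{\alpha,\lambda\alpha}$-invariance of $\Gamma$.
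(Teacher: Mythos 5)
Your overall strategy coincides with the paper's: approximate $F_{\alpha,\lambda\alpha}^{\lfloor 1/\alpha\rfloor}$ by the time-$1$ map of a Hamiltonian (or conservative) flow on $\T^2$, analyze the phase portrait of that flow (which for $\lambda\in(0,1)$ has two horizontal integrable annuli with a genuine twist coming from the divergence of periods at the separatrix), invoke Theorem~\ref{t.KAM} to produce horizontal invariant circles for small $\alpha$, and conclude that $\rho(F_{\alpha,\lambda\alpha})$ is contained in the horizontal axis.

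The significant divergence is in the error estimate for the $N$-fold iterate, and here your assessment of the ``main technical obstacle'' is mistaken. You claim a naive Gronwall estimate ``amplifies the $O(\alpha^2)$ one-step error exponentially in $N$'' and that one therefore needs backward error analysis. That is not so: because $N\alpha\leq 1$, the Gronwall factor is $e^{L N\alpha}\leq e^{L}$, a constant \emph{independent} of $\alpha$, and the accumulated global error is $O(N\alpha^2)=O(\alpha)$. This is the entirely classical fact that a one-step method of local order $p+1$ has global order $p$ over a bounded time interval, and it extends to all derivatives via the variational equation. The paper exploits exactly this observation: it notes that $F_{\alpha,\lambda\alpha}(z)=z+\alpha W^{\lambda,\alpha}(z)$ is the explicit Euler scheme with step $\alpha$ for the vector field $W^{\lambda,\alpha}$, and proves in its appendix (Theorem~\ref{th:euler}) that $\|D^r F_{\alpha,\lambda\alpha}^{\lfloor 1/\alpha\rfloor}-D^r\Phi_1^{\lambda,\alpha}\|=O(\alpha)$, with constants uniform in the parameter. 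No BCH formula, no modified Hamiltonian, no backward error analysis is needed. Your heavier machinery would also work, but it adds complications you would then have to address carefully, for instance that the modified Hamiltonian $\tilde H_{\lambda,\alpha}$ and its action-angle frequency map depend on $\alpha$, so choosing a single $\alpha$-independent Diophantine frequency requires an additional argument.

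A second point where you are more careful than the paper is the passage from $F^N$-invariance to $F$-invariance of the KAM circle via local uniqueness of Diophantine circles; the paper simply asserts ``$F^{n_\alpha}_{\alpha,\lambda\alpha}$ has some horizontal invariant circle $C$, and therefore so has $F_{\alpha,\lambda\alpha}$''. Your fix is valid, but it is not actually necessary: if $F^{n_\alpha}$ has a horizontal essential invariant circle, then $\rho(F^{n_\alpha})$ is contained in a horizontal line, and since $\rho(F^{n_\alpha})=n_\alpha\rho(F_{\alpha,\lambda\alpha})$, so is $\rho(F_{\alpha,\lambda\alpha})$. This avoids the uniqueness argument entirely. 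Two small additional remarks: (i) your claim that the strips bounded by the lifted circle give $|\pi_2(F^n(z)-z)|<1$ is not quite right if the graph has nonzero oscillation; what one actually gets is a uniform bound, which suffices. (ii) Non-degeneracy of the saddles of $H_\lambda$ fails at $\lambda=0$, not only as $\lambda\to 1$; the case $\lambda=0$ is trivial (pure horizontal shear) but should be treated separately if one wants the stated continuity of $\alpha_0$ down to $\lambda=0$.
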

\begin{rem}
 What we actually show is that if $\alpha$ is chosen smaller than
 $\alpha_0(\lambda)$, then there exist horizontal invariant closed curves
 (KAM curves) for $F_{\alpha,\lambda\alpha}$. This implies that the rotation set
 is contained in the horizontal axis.
\end{rem}

\begin{proof}[Proof of Theorem \ref{th:diagonal}]
Consider the vector field 
\[
W^{\lambda,\alpha}(x,y) \ = \ (s(y+\alpha\lambda s(x)),\lambda s(x)) \ ,
\]
and denote the corresponding flow by $\Phi^{\lambda,\alpha}$.  If we perform
Euler's method for the numerical integration of $W^{\lambda,\alpha}$, then the
map we obtain for time step $\alpha$ is exactly
\[
F_{\alpha,\lambda\alpha}(x,y)\ = \ (x,y)+\alpha W^{\lambda,\alpha}(x,y) \ .
\]
Let $n_\alpha=\lfloor 1/\alpha\rfloor$.  Although we have a dependence between
the vector field $W^{\lambda,\alpha}$ and the time step $\alpha$, standard
estimates on the convergence of the Euler method (as, for instance, provided by
Theorem~\ref{th:euler} in the appendix) imply that the $\cC^\infty$-distance
between $F_{\alpha,\lambda\alpha}^{n_\alpha}$ and the time-one map
$\Phi^{\lambda,\alpha}_1$ converges to zero as $\alpha\to0$. (Note here that
there exist uniform bounds for all derivatives of the vector fields
$W^{\lambda,\alpha}$ with $\alpha\in[0,1]$.) As at the same time
$\Phi^{\lambda,\alpha}_1$ clearly converges to $\Phi^{\lambda,0}_1$, this means
that for $\alpha$ sufficiently small the map
$F^{n_\alpha}_{\alpha,\lambda\alpha}$ is $\cC^\infty$-close to
$\Phi^{\lambda,0}_1$.

However, the flow $\Phi^\lambda=\Phi^{\lambda,0}$ with $\lambda\in[0,1]$ is easy
to analyse.  It is a conservative flow, which lifts a flow of $\T^2$ with two
hyperbolic singularities at $(0,0)$ and $(1/2,1/2)$ and two elliptic ones at
$(0,1/2)$ and $(1/2,0)$. When $\lambda = 1$, the hyperbolic singularities have
saddle connections as shown in Figure \ref{f.vectorfields}(a). When $\lambda<1$,
one may easily verify that these connections are replaced by homoclinic
connections as in Figure \ref{f.vectorfields}(b). The region complementary to
the elliptic islands on $\T^2$ consists of two essential horizontal annuli $A_1$
and $A_2$, and the dynamics on the $A_i$ is integrable, that is, all its orbits
are essential (horizontal) circles. Moreover, by the smoothness of the flow, and since each point is these annuli is periodic, the function assigning to each point its period is also smooth and constant on each invariant circle. Note that the set of invariant circles has a natural topology where it is homeomorphic to an open interval of $\R$. Finally, since the boundary of these annuli contain singularities, the function assigning to each circle the period of its point cannot be constant and thus must be strictly monotone in some sub-interval. Therefore one may find a smaller annulus
$A_0\subset A_1$ foliated by invariant circles such that $\Phi^\lambda_1$ is an
integrable twist map on $A_0$ 
By the KAM invariant curve theorem (see \S\ref{s.KAM}), any map sufficiently
$\cC^\infty$-close to $\Phi^\lambda_1$ will have horizontal invariant circles.  In
particular, if $\alpha$ is small enough, $F_{\alpha,\lambda\alpha}^{n_\alpha}$
has some horizontal invariant circle $C$, and therefore so has
$F_{\alpha,\lambda\alpha}$. Hence, $\rho(F_{\alpha,\lambda\alpha})$ must be
contained in a horizontal segment. This proves that for $\alpha$ small enough,
$\rho(F_{\alpha,\lambda\alpha})$ has empty interior.

Finally, we note that due to the stability of the KAM circles, we may choose
$\alpha_0$ such that it is uniformly bounded away from $0$ on any compact
subinterval of $[0,1)$. Reducing $\alpha_0$ further if necessary, we can
  therefore choose it continuously as a function $[0,1)\to (0,+\infty)$.
\end{proof}

\begin{figure}[h!]\label{f.vectorfields}
  \begin{center}
    \includegraphics[scale=0.5]{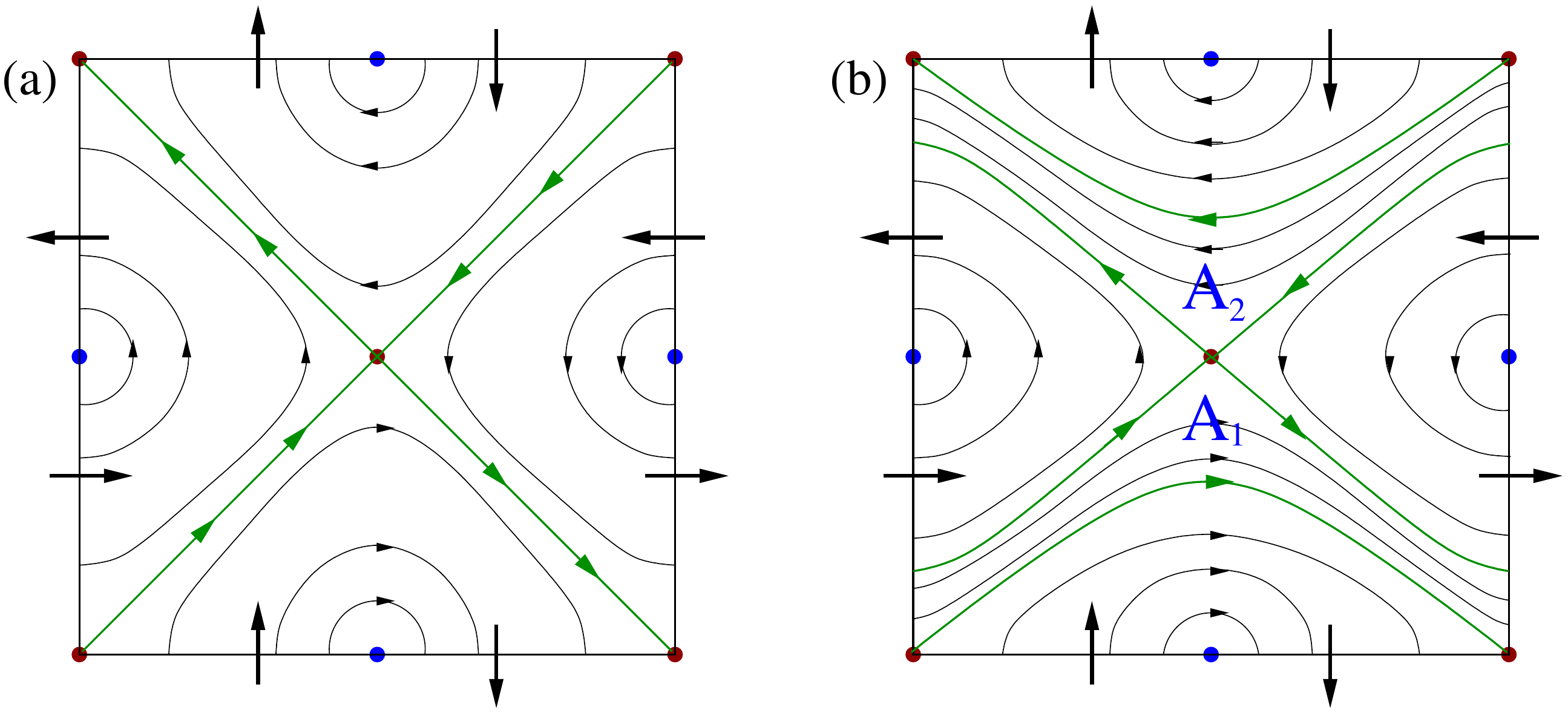}
  \end{center}
  \caption{Schematic picture of the (projections of the) vector fields
    $W^{\lambda,0}$ and the corresponding flows $\Phi^\lambda$ on the torus: (a)
    the case $\lambda=1$; (b) the case $\lambda<1$, with the two invariant annuli
    $A_1$ and $A_2$, bounded by homoclinic saddle-connections (in green).}
\end{figure}

\section{Large parameters: Proof of Theorem \ref{t.scaling}} \label{LargeParameters}

Recall that
\begin{eqnarray}
  \beta^-(\alpha) \ = \ \inf\{\beta>0\mid \inte(\rho(\Fab))\neq\emptyset\} \ ,
  \\ \beta^+(\alpha) \ = \ \sup\{\beta>0 \mid \inte(\rho(\Fab))=\emptyset\}\ ,
\end{eqnarray}
and Theorem~\ref{t.scaling} asserts that both these quantities are of order
$1/\sqrt{\alpha}$ for large $\alpha$, in the sense that there exists constants
$0<c<C$ such that
\begin{equation}
  \frac{c}{\sqrt{\alpha}} \ \leq \ \beta^-(\alpha)\ \leq \ \beta^+(\alpha)
  \ \leq \ \frac{C}{\sqrt{\alpha}} \ .
\end{equation}
We will treat the lower and upper estimate separately.

\begin{prop} \label{p.large_parameters_upper_bound}
  There exists $C>0$ such that $\beta^+(\alpha)\leq
  \frac{C}{\sqrt{\alpha}}$ for all $\alpha\geq 1/2$.
\end{prop}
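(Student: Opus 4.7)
The plan is to reduce the problem to exhibiting a single orbit of \Fab\ with vertical displacement at least $1$, and then to produce such an orbit by exploiting the degeneracy of the horizontal shear $H_\alpha$ at the critical line $\{y = 1/4\}$. The reduction is clean: by the a priori bound of \S\ref{sec:apriori}, for every $\alpha \geq 1/2$ and every $\beta$ one has $(\pm 1/2, 0) \in \rho(\Fab)$, so by Proposition~\ref{p.rotset_shapes} the rotation set is either a horizontal segment through the origin or has non-empty interior. Hence it suffices to find $z \in \R^2$ and $n \in \N$ with $|\pi_2(\Fab^n(z) - z)| \geq 1$, since then Proposition~\ref{p.diffusion_threshold} forces $\pi_2(\rho(\Fab)) \neq \{0\}$. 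Using the reversibility involution $G = H_\alpha \circ S_2$ from \S\ref{sec:sym} together with Proposition~\ref{prop:axis-fb}, it is in fact sufficient to find $x_0 \in \R$ and $n \in \N$ with $\pi_2(\Fab^n(x_0, 0)) = 1/2$ on the universal cover, for then $\Fab^{2n}(x_0, 0) = (x_0, 1)$.

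To construct such an orbit when $\beta \geq C/\sqrt{\alpha}$, I would follow the rescaling strategy of~\cite{Shinohara2002DiffusionThreshold}. Writing $y = \tfrac14 + \eta$ and using $\sin(2\pi(\tfrac14 + \eta)) = \cos(2\pi\eta) = 1 - 2\pi^2\eta^2 + O(\eta^4)$ reveals that $H_\alpha$ is non-twist (degenerate) at $\eta = 0$. Introducing the rescaled variables $\tilde\eta = \sqrt{\alpha}\,\eta$, $\tilde\beta = \sqrt{\alpha}\,\beta$ and subtracting the integer part of $\alpha$ from the horizontal coordinate, one verifies that on any compact subset of the annulus \A, the map \Fab\ conjugated through this rescaling is $C^\infty$-close (to within $O(1/\sqrt{\alpha})$ per iterate) to a limiting standard non-twist map
\[
  F_\ast^{\tilde\beta,\,\{\alpha\}}(x, \tilde\eta) \ = \ \bigl(x + \{\alpha\} - 2\pi^2(\tilde\eta + \tilde\beta\sin(2\pi x))^2,\ \tilde\eta + \tilde\beta\sin(2\pi x)\bigr)
\]
on the cylinder (with $x$ modulo $1$), parametrised by $\tilde\beta$ and the fractional part $\{\alpha\}$. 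The next step is to show that for $\tilde\beta$ larger than a universal constant $C_0$, $F_\ast^{\tilde\beta,\{\alpha\}}$ admits a periodic orbit with nonzero $\tilde\eta$-rotation, uniformly over $\{\alpha\} \in [0,1)$. By the implicit function theorem applied to a nondegenerate such orbit, this periodic orbit persists for the rescaled \Fab\ when $\alpha$ is sufficiently large, yielding a periodic orbit of \Fab\ itself with rotation vector of the form $(\cdot,\ c/\sqrt{\alpha})$ for some $c > 0$. Combined with $(1/2, 0) \in \rho(\Fab)$ and the convexity of $\rho$, this immediately gives non-empty interior.

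The hardest step is to exhibit the required periodic orbit in the limiting non-twist map $F_\ast^{\tilde\beta,\{\alpha\}}$ and to verify its nondegeneracy uniformly in $\{\alpha\}$. Since $F_\ast^{\tilde\beta,\{\alpha\}}$ is a non-twist perturbation of a pure rotation by $\{\alpha\}$ along $\tilde\eta = 0$, the standard KAM theory recalled in \S\ref{s.KAM} cannot be applied in reverse, and one has to resort to a direct construction, most naturally via a Melnikov-type integral along a saddle connection of the $\tilde\beta = 0$ limit or, alternatively, through a variational/action-minimising argument that produces a rotational periodic orbit. The uniformity in $\{\alpha\}$ can be obtained either by a compactness argument over $\{\alpha\} \in [0,1)$ coupled with the continuity of $F \mapsto \rho(F)$ from Theorem~\ref{t.continuity} and openness of $\cN$, or by passing to a suitable power $\Fab^q$ with $q\alpha$ close to an integer via Dirichlet's theorem. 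Finally, persistence of the periodic orbit for the rescaled \Fab\ is a standard consequence of its hyperbolicity (or nondegeneracy), combined with the $O(1/\sqrt{\alpha})$-closeness of the rescaled map to its limit.
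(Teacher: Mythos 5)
Your reductions at the start are fine and align with the paper's strategy: since $\alpha\geq 1/2$ forces $[-1/2,1/2]\subseteq\pi_1(\rho(\Fab))$, Proposition~\ref{p.rotset_shapes} reduces the problem to showing $\pi_2(\rho(\Fab))\neq\{0\}$, and Propositions~\ref{p.diffusion_threshold} and \ref{prop:axis-fb} reduce this further to exhibiting a single orbit with vertical displacement $\geq 1$. After that, however, your route diverges sharply from the paper and runs into a genuine obstruction.

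You propose to rescale by $\tilde\eta=\sqrt{\alpha}(y-1/4)$ and pass to the standard non-twist limit, as the paper does for the \emph{lower} bound in Proposition~\ref{p.standard_nontwist_rescaling}. For the lower bound this works because a KAM circle for the rescaled cylinder map is a graph over $\T^1$, hence corresponds to a horizontal essential invariant circle for $\Fab$ on $\T^2$, which is a barrier forcing $\pi_2(\rho(\Fab))=\{0\}$. The dual inference --- that absence of KAM circles, or ``diffusion'' in the rescaled picture, implies nonzero vertical rotation for $\Fab$ --- does not follow. Two specific problems: (i) the rescaled map lives on the cylinder $\T^1\times\R$ in which the second coordinate is $\R$-valued, so a \emph{periodic} orbit of the limiting non-twist map necessarily has zero $\tilde\eta$-displacement, and there is no rotation vector in that direction to persist; what one would need is an orbit of the limit with \emph{unbounded} $\tilde\eta$, which is not a periodic orbit and is not amenable to an implicit function theorem argument; (ii) even granting an unbounded rescaled orbit, the rescaling only models $\Fab$ on an $O(1/\sqrt{\alpha})$-strip around $y=1/4$, so an orbit exiting that strip simply escapes the region where the approximation is valid; escape from a neighbourhood of the critical line is not the same as sustained vertical drift on the torus, and proving the latter from the former is precisely the notoriously hard ``converse KAM'' problem that the paper explicitly declines to tackle (``it is difficult to convert it into a rigorous proof''). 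You flag the step of producing the diffusive orbit as ``the hardest step'' but do not carry it out; that step is where the proof would fail.

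The paper instead proves the upper bound by an elementary geometric shear argument that does not invoke any KAM or rescaling machinery. It proves a general statement (Proposition~\ref{pr:largeparametersgeneral}) about maps of the form $F_{\varphi,\psi}=H_\varphi\circ V_\psi$: if $\psi$ attains the value $\beta$ and $0$, and $\varphi$ has minimal variation at least $2$ on every interval of length $\delta\leq\beta/2$, then one can inductively build a nested sequence of arcs $\alpha_n$, each of horizontal extent $1$, drifting upward at rate $\geq\beta-\delta$ per step, so $\pi_2(\rho(F_{\varphi,\psi}))\geq\beta-\delta>0$. Applying this with $\psi=\beta s$, $\varphi=\alpha s$ and $\delta=\beta/2$, and using the elementary estimate $\mathrm{Var}_s(\delta)\geq\pi\delta^2$, yields $\mathrm{Var}_{\alpha s}(\beta/2)>2$ whenever $\alpha\geq 8/(\pi\beta^2)$, which is the required $\beta^+(\alpha)\leq C/\sqrt{\alpha}$. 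The contrast is worth noting: the shear argument gives the diffusive (upper) bound by direct construction of drifting orbits, while KAM/rescaling only works in the confining (lower) direction, where the presence of an invariant circle is a robust barrier. Your proposal tries to use the confining tool for the diffusive conclusion, and that is the gap.
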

We recall from \S\ref{sec:apriori} that $[-1/2,1/2]\ssq
\pi_1(\rho(\Fab))$ whenever $\alpha\geq 1/2$. Hence, in order to find an upper bound on $\beta^+(\alpha)$
it suffices to show that the rotation set is not contained in the horizontal
axis for any $\beta$ larger than the desired bound.

In order to do so, we will use a geometric argument that essentially relies on
the fact that the horizontal shift $H_\alpha$ induces a strong shear in most
parts of the phase space. As the proof does not use any specific properties of
the kicked Harper model and the construction may also be useful in other
situations, we work in a slighly more general setting. Consider homeomorphisms
of $\T^2$ of the following type: Let $varphi,\psi,:\R\to\R$ be two continuous
and 1-periodic functions. Let $H_{\varphi},V_{\psi}:\R^2\to\R^2$ be given by
\begin{eqnarray}
H_{\varphi}(x,y) & = & (x+\varphi(y),y)\ , \\ V_{\psi}(x,y)& = & (x, y+\psi(x))
\ ,
\end{eqnarray}
and define $F_{\varphi,\psi}= H_{\varphi}\circ V_{\psi}$ (note that, with
  this notation, the Harper map $F_{\alpha,\beta}$ should be denoted $F_{\alpha
    s,\beta s}$). Given a 1-periodic continuous function $\gamma:\R\to\R$, let
$$\mathrm{Var}_\gamma(\delta)= \min_{t\in \R}\left(\max_{x\in[t,
    t+\delta]}\gamma(x)-\min_{x\in[t, t+\delta]}\gamma(x)\right)$$ be the
minimal variation that the function $\gamma$ has on an interval of length
$\delta$.

\begin{prop}\label{pr:largeparametersgeneral}
Let $\varphi,\psi $ be such that $ \min_{x\in\R}\psi(x)\le
0<\max_{x\in\R}\psi(x)=\beta$, and such that there exists $\delta\le\beta/2$
such that $\mathrm{Var}_{\varphi}(\delta)\ge 2$. Then
$\pi_2\left(\rho(F_{\varphi,\psi})\right)\ge \beta-\delta.$
\end{prop}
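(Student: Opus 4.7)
The plan is to produce a sequence of points $(z_n)_{n\in\N}\subset \R^2$ such that the normalized displacements $v_n := (F^n(z_n)-z_n)/n$ (with $F := F_{\varphi,\psi}$) satisfy $\pi_2(v_n) \ge \beta - \delta - o(1)$. Since $F-\id$ is $\Z^2$-periodic, the sequence $(v_n)$ is bounded, and any accumulation point $v$ belongs to $\rho(F_{\varphi,\psi})$ by the definition of the rotation set and Proposition~\ref{p.rotset}; this yields $\pi_2(v) \ge \beta - \delta$, the desired conclusion.

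The starting point is to fix $x_0 \in \R$ with $\psi(x_0) = \beta$ (which exists by continuity of $\psi$) and to consider the vertical segment $J := \{x_0\} \times [0,\delta]$. Since $\psi \equiv \beta$ along the line $\{x_0\}\times\R$, the image $F(J)$ is the continuous curve
\[
F(J) \;=\; \{(x_0 + \varphi(y+\beta),\, y+\beta) : y \in [0,\delta]\},
\]
whose $y$-coordinate is strictly monotone in $[\beta, \beta+\delta]$ and whose $x$-coordinate, by the hypothesis $\mathrm{Var}_\varphi(\delta) \ge 2$, varies by at least $2$ as $y$ ranges over $[0,\delta]$. Consequently, for at least two consecutive integers $k$ the intermediate value theorem produces $y_k^* \in [0,\delta]$ with $\varphi(y_k^* + \beta) = k$, and then $F(x_0, y_k^*) = (x_0 + k,\, y_k^* + \beta)$ lies on the translated vertical line $\{x_0 + k\}\times\R$, realising the full vertical displacement~$\beta$ in one step.

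The main step will be an inductive topological construction producing, for each $n \ge 1$, a closed subsegment $J_n \subset J$ of positive length and a point $z_n \in J_n$ with the property that $F^n(z_n) \in \{x_0\}\times\R + \Z \times \{0\}$ and
\[
\pi_2(F^n(z_n)) \;\ge\; \pi_2(z_n) + n(\beta-\delta).
\]
Starting from a subsegment $J_{n-1}$ on which $F^{n-1}$ returns the initial condition to a vertical line $\{x_0+k\}\times\R$ while having increased the $y$-coordinate by at least $(n-1)(\beta-\delta)$, the inductive step rests on three points: (i) along $J_{n-1}$, the vertical coordinate of $F^{n-1}(\cdot)$ sweeps out an arc of length at least $\delta$, by continuity and the positive length of $J_{n-1}$; (ii) the hypothesis $\mathrm{Var}_\varphi(\delta)\ge 2$ then forces the $x$-coordinate of $F^n(\cdot)$, as the initial condition runs over $J_{n-1}$, to take every value of the form $x_0 + k$ for some $k\in\Z$ appearing in the range of $\varphi$ on the corresponding $\delta$-slice, yielding the required subsegment $J_n\subset J_{n-1}$ and point $z_n$ by a further application of the intermediate value theorem; (iii) the assumption $\delta \le \beta/2$ ensures that among the available return values at least one increases the $y$-coordinate by at least $\beta-\delta$ rather than less, since the subsegment on which $\varphi$ takes an integer value is forced to lie within a region where $\psi$ still guarantees a vertical advance of at least $\beta-\delta$.

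Granted the inductive construction, the displacements satisfy $F^n(z_n) - z_n = (k_n, m_n)$ with $k_n \in \Z$ and $m_n \ge n(\beta-\delta) - \delta$, so $\pi_2(v_n) \ge \beta - \delta - \delta/n$. Passing to a convergent subsequence of the bounded sequence $(v_n)$, we obtain the required rotation vector $v \in \rho(F_{\varphi,\psi})$ with $\pi_2(v) \ge \beta - \delta$.

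The hard part will be the inductive step, where one must simultaneously control the horizontal and vertical dynamics of the continuous family of orbits indexed by $J_{n-1}$. The central mechanism at work is the interplay between the strong oscillation of $\varphi$ at scale $\delta$ (which forces the $x$-coordinate of $F^n(J_{n-1})$ to wind transversely to the vertical line $\{\bar x_0\}\times\R$ in the annulus $\T^1\times\R$) and the bound $\delta\le\beta/2$ (which controls the vertical cost paid at each step in order to achieve the return to $\{x_0\}\times\R \pmod{\Z^2}$).
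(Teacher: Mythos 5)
Your high-level understanding of the mechanism is sound: you correctly identify that the hypothesis $\mathrm{Var}_\varphi(\delta)\ge 2$ produces horizontal winding of at least $2$ whenever a curve has vertical extent $\delta$, that the points where $\psi=0$ and $\psi=\beta$ bracket a vertical advance of $\beta$ when the horizontal extent is $1$, and that $\delta\le\beta/2$ is precisely what is needed so that a net advance of $\beta-\delta$ per step can be extracted. These are exactly the three ingredients the paper uses.

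However, the inductive step as written has a genuine gap, and the invariant you propose to propagate is the wrong one. You want a nested sequence of subsegments $J_n\subset J$ of \emph{positive length}, together with a point $z_n\in J_n$ whose $F^n$-image lies on a vertical line $\{x_0+k\}\times\R$. But for $F^n(J_n)$ to lie on a vertical line, $J_n$ would generically have to degenerate to a single point (the set on which $\varphi$ hits a given integer is $0$-dimensional as soon as $\varphi$ is not locally constant), so the invariant ``positive length'' is incompatible with the constraint you need. More importantly, your claim (i) --- that continuity and positive length of $J_{n-1}$ force the $y$-coordinate of $F^{n-1}$ to sweep out an arc of length at least $\delta$ along $J_{n-1}$ --- is simply false: $F^{n-1}$ could contract $J_{n-1}$ to an arbitrarily short arc, and you have given no mechanism preventing this. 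Without a lower bound on the \emph{vertical} extent of the image arc, the hypothesis $\mathrm{Var}_\varphi(\delta)\ge 2$ cannot be invoked, and the whole inductive step collapses.

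The fix the paper uses is to track a geometric invariant on the \emph{image} curve rather than on preimage subsegments: one maintains a subarc $\alpha_n\subset F^n(\alpha_0)$ with \emph{horizontal} extent exactly $1$, confined to a horizontal strip of height $\delta$ at level $n(\beta-\delta)$. Horizontal extent $1$ guarantees (by $1$-periodicity of $\psi$) that $V_\psi$ realises the full vertical swing $\beta$ somewhere on the arc, producing after $V_\psi$ a subarc of vertical extent exactly $\delta$ in the next strip; then $\mathrm{Var}_\varphi(\delta)\ge 2$ lets $H_\varphi$ shear that subarc by $\ge 2$, from which one again extracts a piece of horizontal extent $1$. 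The horizontal extent $1$ is the datum that replaces your ``positive length of $J_{n-1}$'': it is an \emph{open} (robust) condition on the image curve, whereas the condition ``$F^n(z_n)$ lies on a prescribed vertical line'' is closed and degenerate. If you try to flesh out your induction you will be forced to introduce something equivalent to the paper's invariant, so the approach is not genuinely different --- it is the same argument with the bookkeeping set up on the wrong side of $F^n$, and the key quantitative claim left unproved.
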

\begin{proof}
Let $\alpha_0$ be the line segment joining $(0,0)$ to $(1,0)$. We will show by
induction that, for every $n\ge1$, there exists a curve $\alpha_n\subset
F_{\varphi,\psi}(\alpha_{n-1})$ such that
$\max\pi_1(\alpha_n)-\min\pi_1(\alpha_n)=1$ and $\alpha_n\subset \R\times
[n(\beta-\delta),n(\beta-\delta)+\delta]$. The last property clearly implies the
proposition, as it shows that there are points in $\alpha_0$ whose vertical
displacement after $n$ iterations is $\geq n(\beta-\delta)$.

Given $n\geq 1$, suppose that $\alpha_{n-1}$ satisfies the inductive assumption
(which is trivial for $\alpha_0$) and let
$a_{n-1}=\min_{z\in\alpha_n}\pi_1(z)$. There exists some $x_0,x_1\in
[a_{n-1},a_{n-1}+1]$ such that $\psi(x_0)=0$ and $\psi(x_1)=\beta$. Note that,
by the induction hypothesis, there exists $y_0,y_1$ such that both $(x_0, y_0)$
and $(x_1, y_1)$ belong to $\alpha_{n-1}$, and 
\[
\alpha_{n-1} \ \ssq \ [a_{n-1},a_{n-1}+1]\times
      [(n-1)(\beta-\delta),(n-1)(\beta-\delta)+\delta] \ .
\]
Note further that $V_\psi(x_0, y_0)=(x_0,y_0)$ and $\delta\le \beta/2$, so
\[
\pi_2(V_{\psi}(x_0,y_0))\ \le \ (n-1)(\beta-\delta)+\delta \le
 n(\beta-\delta) \ ,
\]
and $V_\psi(x_1, y_1) = (x_1, y_1+\beta)$, so
\[
\pi_2(V_{\psi}(x_1,y_1))\ \ge \ (n-1)(\beta-\delta)+\beta \ = \ 
n(\beta-\delta)+\delta \ .
\]
Moreover, $V_{\psi}(\alpha_{n-1})$ is still contained in the strip
$[a_{n-1},a_{n-1}+1]\times\R$. One deduces that there exists a sub-arc $\gamma$
of $V_{\psi}(\alpha_{n-1})$ contained in $[a_{n-1},a_{n-1}+1]\times
[n(\beta-\delta),n(\beta-\delta)+\delta]$ such that $\gamma$ intersects both the
upper and lower boundaries of this rectangle.

Now, as $\mathrm{Var}_{\varphi}(\delta)\ge 2$, we may find $y_0', y_1'$ in
$[n(\beta-\delta),n(\beta-\delta)+\delta]$ such that
$\varphi(y_1')-\varphi(y_0')\ge 2$. Let $x_0', x_1'$ be such that both $(x_0',
y_0')$ and $(x_1', y_1')$ belong to $\gamma$. Note that
$$\pi_1\left(H_{\varphi}((x_0', y_0'))\right)\le a_{n-1}+1+\varphi(y_0')\le
a_{n-1}+\varphi(y_1')-1,$$ and $$\pi_1\left(H_{\varphi}((x_1', y_1'))\right)\ge
a_{n-1}+\varphi(y_1').$$ Moreover, $H_{\varphi}(\gamma)$ is contained in the
strip $\R\times [n(\beta-\delta),n(\beta-\delta)+\delta]$. Choosing
$a_n=a_{n-1}+\varphi(y_1')-1$ one deduces the existence of a subarc $\alpha_n$
of $H_{\varphi}(\gamma)\subset F_{\varphi,\psi}(\alpha_{n-1})$ such that
\[
\alpha_n \ \ssq \ [a_n,a_n+1]\times [n(\beta-\delta),n(\beta-\delta)+\delta]
\]
and $\alpha_n$ intersects both the left and right boundaries of this
rectangle, 
%Reducing $\alpha_n$ further if necessary,
 proving the induction
assumption for $n$ and thus the proposition.
 \end{proof}

\proof[\textbf{\textit{Proof of Proposition~\ref{p.large_parameters_upper_bound}}}.]
Let $s(x)=\sin(2\pi x)$ as before. First we observe that, if $0<\delta<1/2$, then $\mathrm{Var}_{s}(\delta) \geq \pi\delta^2$. This can be verified by noting that the interval $(x,x+\delta)$ contains a subinterval of the form $(y, y+\delta/2)$ where there is no critical point of $s$. Thus we may assume that $(y, y+\delta/2)\subset (-1/4, 1/4)$ (since $|\cos(2\pi t)|$ is $1/2$-periodic), and  $|s(y+\delta/2)-s(y)| = \int_{y}^{y+\delta/2} |2\pi \cos(2\pi t)|dt$. Using the bound $\cos(2\pi t) \geq 1-4|t|$ in $(-1/4, 1/4)$ one obtains $\int_{y}^{y+\delta/2} |2\pi \cos(2\pi t)|dt\geq  \pi\delta^2$. 

Note also that $\beta^+(\alpha) \leq 1/2$ if $\alpha\geq 1/2$ (see \S\ref{sec:apriori}). Since $\mathrm{Var}_{\alpha s}(\delta)=\alpha\mathrm{Var}_{s}(\delta)$, if $0<\beta < 1/2$ and
$\alpha \ge \frac{8}{\pi\beta^2}$, we get that $\mathrm{Var}_{\alpha
  s}(\beta/2)>2$. Taking $C=\frac{8}{\pi}$, we get by Proposition
\ref{pr:largeparametersgeneral} that if $\alpha\ge C/\beta^2$, then
$\rho(F_{\alpha,\beta})=\rho(F_{\alpha s,\beta s})$ is not contained in
$\R\times\{0\}$.
Hence, $(\alpha,\beta)\in\cN$ in this case,
thus proving that $\beta^+(\alpha)\leq C/\sqrt{\alpha}$ for all $\alpha\geq
1/2$.  \qed\medskip

\begin{prop} \label{p.standard_nontwist_rescaling}
  There exists a constant $c>0$ such that for any $\alpha\geq 1$ we have that
  $\beta^-(\alpha)\geq c/\sqrt{\alpha}$.
\end{prop}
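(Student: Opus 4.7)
The plan is to show that for $\alpha$ large and $\beta \leq c/\sqrt{\alpha}$ with $c$ a small absolute constant, $F_{\alpha,\beta}$ admits an essential horizontal invariant curve on $\T^2$. Once produced, its integer translates partition $\R^2$ into strips of height $1$ invariant under $F_{\alpha,\beta}$; every orbit then has vertical displacement bounded by $1$, forcing $\pi_2(\rho(F_{\alpha,\beta}))=\{0\}$, so $\rho(F_{\alpha,\beta})\subset \R\times\{0\}$ has empty interior. This yields $\beta^-(\alpha)\ge \tilde\beta_0/\sqrt\alpha$ for $\alpha$ above some threshold $\alpha_0$; for the bounded range $\alpha\in[1,\alpha_0]$, a direct application of KAM to $F_{\alpha,0}=H_\alpha$ (whose twist $2\pi\alpha\cos(2\pi y)$ is nondegenerate away from $y\in\{1/4,3/4\}$) produces invariant curves for all sufficiently small $\beta$ uniformly in this compact range, yielding $\beta^-(\alpha)\geq \beta_\ast >0$. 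Choosing $c=\min(\tilde\beta_0,\beta_\ast)$ then concludes the proof.

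To obtain the invariant curve for $\alpha$ large, I would rescale coordinates around the critical line $y=1/4$ where the twist of $H_\alpha$ degenerates. Setting $\Psi_\alpha(x,u)=(x,\tfrac{1}{4}+u/\sqrt\alpha)$ and $\tilde\beta:=\sqrt\alpha\,\beta$, a short computation yields
\[
\tilde F_{\alpha,\beta}(x,u):=\Psi_\alpha^{-1}\circ F_{\alpha,\beta}\circ\Psi_\alpha(x,u) = \bigl(x+\omega_\alpha(v),\,v\bigr),\quad v=u+\tilde\beta\sin(2\pi x),
\]
where $\omega_\alpha(u):=\alpha\cos(2\pi u/\sqrt\alpha)$. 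Taylor expanding gives $\omega_\alpha(u)=\alpha-2\pi^2 u^2+O(1/\alpha)$ and $\omega_\alpha'(u)=-4\pi^2 u+O(1/\alpha)$ on the compact interval $u\in[1,2]$, with all higher $C^r$-norms of $\omega_\alpha$ on $[1,2]$ uniformly bounded in $\alpha$. Thus the integrable part $\tilde F_{\alpha,0}(x,u)=(x+\omega_\alpha(u),u)$ is an exact symplectic twist map with twist uniformly nondegenerate on $[1,2]$, while $\tilde F_{\alpha,\tilde\beta}-\tilde F_{\alpha,0}$ has $C^r$-norm of order $\tilde\beta$ uniformly in $\alpha$ on compact sets. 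The rescaling essentially converts the diverging twist of $H_\alpha$ near $y=1/4$ into an order-one twist in the new variable $u$, recovering (a version of) the standard non-twist map invoked in \cite{Shinohara2002DiffusionThreshold}.

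The final step is a uniform application of Theorem~\ref{t.KAM}. Fix Diophantine constants $(\tau,K)$ in advance; since $\omega_\alpha$ varies by $\approx 6\pi^2>1$ as $u$ ranges over $[1,2]$, for each $\alpha$ there exists $u_\alpha\in[1,2]$ with $\omega_\alpha(u_\alpha)$ satisfying the $(\tau,K)$-Diophantine condition. Theorem~\ref{t.KAM} then yields an invariant horizontal circle of $\tilde F_{\alpha,0}$ near $u=u_\alpha$, persistent under sufficiently $C^\infty$-small perturbations. The principal technical issue is securing a \emph{uniform} perturbation threshold as $\alpha\to\infty$: in a quantitative formulation of KAM (as in \cite{bost}), this threshold depends only on $(\tau,K)$, the twist lower bound, and the $C^r$-norms of $\omega_\alpha$ on $[1,2]$—all uniform in $\alpha$. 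The subtle point is that although $\omega_\alpha(u)$ itself is of order $\alpha$, only its residue mod $1$ affects the dynamics on the annulus, and the derivative data is bounded; this ensures the KAM threshold is independent of $\alpha$. Therefore there exist absolute constants $\alpha_0,\tilde\beta_0>0$ such that $\tilde F_{\alpha,\tilde\beta}$ has a horizontal invariant circle near $u=u_\alpha$ for all $\alpha\ge\alpha_0$ and $|\tilde\beta|\le\tilde\beta_0$, which pulls back via $\Psi_\alpha$ to the required invariant curve of $F_{\alpha,\beta}$ near $y=1/4$, completing the argument.
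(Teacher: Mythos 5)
Your proposal is correct and follows essentially the same strategy as the paper: handle the compact range $\alpha\in[1,\alpha_0]$ by applying KAM directly near $H_\alpha$, and for large $\alpha$ rescale the vertical coordinate around the critical line $y=1/4$ by $\sqrt\alpha$ (the paper uses $\sqrt{\kappa\alpha}$ with $\kappa=4\pi^2$, only a cosmetic difference), so that the degenerate twist of $H_\alpha$ becomes a uniformly nondegenerate twist in the rescaled coordinate, and then invoke a uniform KAM threshold. Your algebra checks out (the paper conjugates to $G_{\alpha,\beta}=V_\beta\circ H_\alpha$ first, while you work with $F_{\alpha,\beta}$ directly; either works). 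The one place where the paper is more careful than your writeup is precisely the ``subtle point'' you flag: to get a \emph{uniform} KAM threshold as $\alpha\to\infty$, the paper passes to the quotient annulus $\A$, exploits the periodicity $S_{\alpha_0+1,0}=S_{\alpha_0,0}$ to identify the limiting family with a family over the compact set $\alpha_0\in\T^1$, and proves $\cC^k$-convergence (uniform in $\alpha_0$) of the rescaled and projected maps to $S_{\alpha_0,0}$; compactness then gives the uniform threshold. Your appeal to a quantitative KAM bound depending only on $(\tau,K)$, the twist lower bound, and derivative norms, combined with the observation that only $\omega_\alpha\bmod 1$ matters on the annulus, is the same idea expressed more informally; a fully rigorous version would want either an explicit quantitative KAM statement or the paper's precompactness argument, but the mathematical content is identical.
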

\proof It will be convenient to consider the maps $G_{\alpha,\beta}=V_\beta\circ
H_\alpha$ instead of $F_{\alpha,\beta}$. Note that since
$G_{\alpha,\beta}=V_\beta\circ F_{\alpha,\beta}\circ V_{\beta}^{-1}$, we have
$\rho(F_{\alpha,\beta})=\rho(G_{\alpha,\beta})$ and may therefore replace
$F_{\alpha,\beta}$ by $G_{\alpha,\beta}$ in the definition of $\beta^-(\alpha)$
in \ref{t.scaling}. Further for any $\alpha_0>0$ the restriction of
$F_{\alpha_0,0}$ to $\R\times[-1/8,1/8]$ is a lift of the completely integrable
twist map $f_{\alpha_0,0}$ on the annulus $A$ obtained by projecting the
corresponding strip $\R\times [-1/8, 1/8]$. By the KAM theorem (Theorem \ref{t.KAM}), $f_{\alpha_0,0}$
has stable KAM circles, and thus $f_{\alpha,\beta}$ has a horizontal KAM circle
whenever $(\alpha,\beta)$ is close enough to $(\alpha_0,0)$. By a compactness
argument, this guarantees that for each $M>0$ there is a constant $c_M$ such
that $\beta^-(\alpha) > c_M > 0$ whenever $\alpha\in [1,M]$. As a consequence,
it will be sufficient to prove the estimate of the lemma for large enough values
of $\alpha$.

Let $\A=\T^1\times\R$ and $\kappa=4\pi^2=|s''(1/4)|$ and consider the parameter
family of annular diffeomorphisms $S_{\alpha,\beta}\colon \A\to \A$
lifted by
\[
\tilde S_{\alpha,\beta} : \R^2\to \R^2 \quad , \quad (x,y) \ \mapsto \ V_{\beta}(x+\alpha -\kappa
y^2,y) \ .
\]
We note that this family is sometimes referred to as the {\em standard non-twist
  map}
(e.g.\ \cite{ShinoharaAizawa1998NontwistMapsTransitionToChaos,ShinoharaAizawa1998NontwistMapsTransitionToChaos}).
For each $\eta>0$ and $\alpha_0\in\R$, the restriction of the map
$S_{\alpha_0,0}$ to $\T^1\times[\eta,1]$ is a completely integrable twist
map and therefore has stable horizontal KAM circles. Moreover, we have
$S_{\alpha_0+1,\beta}=S_{\alpha_0,\beta}$, so that $\alpha_0$ can be viewed as
an element of \kreis. Hence, by compactness we obtain that there exist constants
$b,\eps_0>0$ and $k_0\in\N$ such that any smooth injective map $G: \A\to\A$
whose restriction to $\cA=\kreis\times[0,1]$ is $\eps_0$-close to
$S_{\alpha_0,0|\cA}$ in the $\cC^{k_0}$-metric for some $\alpha_0\in\R$ has
horizontal KAM circles.

Now, given $\alpha,\beta\in\R$, consider the rescaling
\[
\tilde G_{\alpha,\beta} \ = \ \Phi_\alpha\circ G_{\alpha,\beta}\circ
\Phi_{\alpha}^{-1}
\]
of $G_{\alpha,\beta}$, where
$\Phi_{\alpha}(x,y)=(x,\sqrt{\kappa\alpha}(y-1/4))$. Note that
$\Phi_{\alpha}\circ V_{\beta}= V_{\sqrt{\kappa\alpha}\beta}\circ \Phi_{\alpha}$,
and therefore
\[
\tilde G_{\alpha,\beta} \ = \ V_{\sqrt{\kappa\alpha}\beta}\circ
\Phi_{\alpha}\circ H_{\alpha}\circ \Phi_{\alpha}^{-1} \ =
V_{\sqrt{\kappa\alpha}\beta}\circ \tilde G_{\alpha,0} \ .
\]
Let $\widehat G_{\alpha,\beta}:\A \to \A$ be the homeomorphism naturally induced
by $\tilde G_{\alpha,\beta}$ on $\A$. Then it can be checked that, due to the
above rescaling, the maps $\widehat G_{\alpha_0+n,0|\cA}$ converge to
$S_{\alpha_0,0|\cA}$ as $n\to\infty$ in the $\cC^k$-topology for any
$k\in\N$. (Note here that $\widehat G_{\alpha_0+n,0}\neq \widehat
G_{\alpha_0,0}$ for $n\in\N\smin\{0\}$, since the rescaling that is carried out
before projecting to $\cA$ is different for the two maps.) Moreover, the
convergence is uniform in $\alpha_0\in[0,1]$. Hence, there exists a constant
$M>0$ such that such that for any $\alpha>M$ the map $\widehat G_{\alpha,0|\cA}$
is $\eps_0/2$-close to $S_{\alpha,0|\cA}$ in the $\cC^{k_0}$-topology.

Further, there exists $\delta>0$ such that for any $\alpha\in \R$ and
$\tilde\beta\in(0,\delta)$ the map $\tilde
G_{\alpha,\tilde\beta/\sqrt{\kappa\alpha}}=V_{\tilde\beta}\circ \tilde
G_{\alpha,0}$ is $\eps_0/2$-close to $\tilde G_{\alpha,0}$ in the
$\cC^{k_0}$-topology. As a consequence, we obtain that for any $\beta\in
(0,\delta/\sqrt{\kappa\alpha})$ the map $\widehat G_{\alpha,\beta|\cA}$ is
$\eps_0/2$-close to $\widehat G_{\alpha,0|\cA}$, and thus $\eps_0$-close to
$S_{\alpha,0|\cA}$ in the $\cC^{k_0}$-topology when $\alpha>M$. By the above,
this means that $\widehat G_{\alpha,\beta}$ has invariant KAM circles. However,
as $\widehat G_{\alpha,\beta}$ is just a rescaling of the projection of
$G_{\alpha,\beta}$ to $\A$, this means that the rotation set of
$G_{\alpha,\beta}$ is confined to the horizontal axis, that is,
$\rho(G_{\alpha,\beta})\ssq \R\times\{0\}$.  Letting $c =
\min\{\delta/\sqrt{\kappa},\, c_M\}$ (where $c_M$ is the constant from the
beginning of the proof) we conclude that $\beta^-(\alpha)\geq c/\sqrt{\alpha}$
for all $\alpha\geq 1$, as required.  \qed\medskip

\section{Questions and final remarks}\label{questionsandremarks}

The kicked Harper map, by which we mean the whole parameter family
$(f_{\alpha,\beta})_{\alpha,\beta\in\R}$, shows a rich variety of different
dynamical behaviours and phenomena. We believe that its study as a paradigmatic
example of smooth torus dynamics can be extremely fruitful and may lead to
general insights about torus dynamics and rotation theory on surfaces that go
well beyond the context of this particular example. With the results presented
above, we have merely scratched at the surface of a multitude of intriguing open
problems that can be investigated in this context. In the remainder of this
section, we collect a few directions in which future research on this topic may
be oriented.

\subsection{Structure of the parameter regions} 

The aim for a better understanding of the structure of the parameter regions
$\cE$ and $\cN$ leads to a number of further questions concerning their
qualitative and quantitative properties. First of all, in analogy to the
well-known problems in the study of Julia sets in complex dynamics, one may ask
\begin{itemize}
\item Are the sets $\cE$ and $\cN$ connected? Are they locally connected?
\end{itemize}
As Figure \ref{fig:zoom_duplo} shows, even connectedness should not be taken for granted. 

\begin{figure}
\begin{center}
  \includegraphics[scale=0.2]{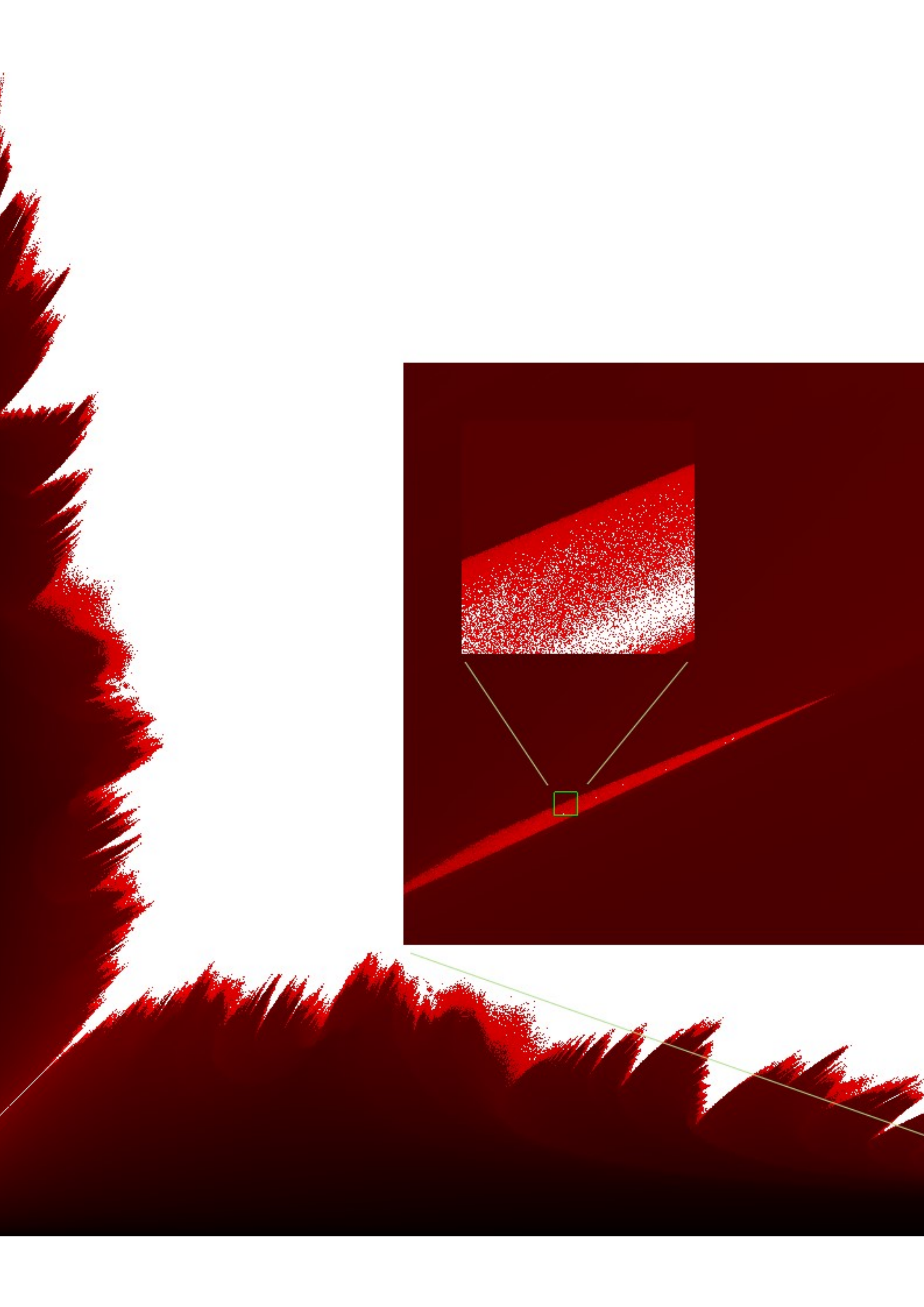}
  \caption{More detailed look at the sets $\cE$ and $\cN$ with zoomed regions where numerical estimates indicate the possibility of non-connectedness of the sets. The first zoomed square corresponds approximately to the parameter region $[0.915, 0.935]\times [0.065, 0.085]$. The second zoomed square corresponds approximately to the parameter region
$[0.920, 0.921]\times [0.069, 0.070]$.}  \label{fig:zoom_duplo}
\end{center}
  \end{figure}

Another problem that we leave open here is that of the seemingly periodic
structure of the set $\cE$ observed in Figure~\ref{f.emptyint_along_axes} (and
described previously in \cite{Shinohara2002DiffusionThreshold}). In mathematical
terms, one may formulate it as follows.
% \textcolor{blue}{
Let $M_2((x,y),t)= (x, ty)$
%}
\begin{conj}
  The sequence $A_n=M_2((\cE\cap [n,n+1]\times[0,1])-(n,0), \sqrt{n})$ %\marginpar{\textcolor{blue}{We needed to reescale this}}
 converges in Hausdorff
  distance to the set 
\[
A \ = \ \{(\alpha,\beta)\in[0,1]^2\mid S_{\alpha,\beta} \textrm{ admits
  unbounded orbits}\} ,
\]
where $S_{\alpha,\beta}$ is the standard non-twist map introduced in the proof
of Proposition~\ref{p.standard_nontwist_rescaling}. 
\end{conj}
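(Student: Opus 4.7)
The plan is to upgrade the rescaling argument from the proof of Proposition~\ref{p.standard_nontwist_rescaling} to a statement about the Hausdorff limit of parameter sets. Recall that the rescaling $\Phi_\alpha(x,y) = (x, \sqrt{\kappa\alpha}(y-1/4))$ turns the Harper map $G_{\alpha,\beta}$, restricted to a shrinking neighbourhood of $\R \times \{1/4\}$, into a map on the annulus $\cA = \kreis\times\R$ whose dynamics converges, as $n\to\infty$ and with the parameter change $(\alpha,\beta) \mapsto (\alpha+n, \beta/\sqrt{\kappa(\alpha+n)})$, to the standard non-twist map $S_{\alpha,\beta}$ in the $\cC^k$-topology on every compact subset of $\cA$. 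A first step of the proof would be to verify that the extra $\sqrt{\kappa}$ factor (versus the $\sqrt{n}$ rescaling in the definition of $A_n$) can be absorbed into a parameter reparametrisation without affecting the Hausdorff limit, so that the sets $A_n$ can be read directly as parameter sets for the maps $\widehat G_{\alpha+n,\beta/\sqrt{n}}$ viewed as perturbations of $S_{\alpha,\beta}$.

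Next, I would reformulate the condition $(\alpha+n, \beta/\sqrt{n})\in\cE$ in dynamical terms intrinsic to the annulus. Since $\alpha+n\geq 1/2$ for all $n\geq 1$, the a~priori bounds from \S\ref{sec:apriori} imply $[-1/2,1/2]\subseteq \pi_1(\rho(F_{\alpha+n,\beta/\sqrt{n}}))$, so that the empty-interior condition collapses to $\pi_2(\rho)=\{0\}$. Combining Proposition~\ref{p.diffusion_threshold} with Proposition~\ref{prop:axis-fb}, this is in turn equivalent to the uniform bound $|\pi_2(F_{\alpha+n,\beta/\sqrt{n}}^k(z)-z)|<1$ for all $z$ and all $k$. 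Under $\Phi_{\alpha+n}$ this becomes a bound of order $\sqrt{n}$ on the rescaled vertical displacement; as $n\to\infty$ the constraint is lost, and the natural dynamical notion on $\cA$ capturing the limiting behaviour is precisely the (non-)existence of orbits unbounded in the $y$-direction for $S_{\alpha,\beta}$. The central content of the conjecture is that this correspondence is exact in the Hausdorff limit, and making it precise is where the rescaling must be combined with classical Aubry--Mather/KAM theory for the non-twist map.

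The Hausdorff convergence splits into two inclusions. Upper semicontinuity ($\limsup_n A_n\subseteq A$) should follow by a diagonal/compactness argument: given $(\alpha_n,\beta_n)\in A_n$ with $(\alpha_n,\beta_n)\to(\alpha,\beta)$, one picks orbit segments realising the maximal rescaled vertical displacement and passes to a limit using the $\cC^k$-convergence of the rescaled maps, obtaining an orbit of $S_{\alpha,\beta}$ with unbounded $y$-coordinate. Lower semicontinuity ($A\subseteq \liminf_n A_n$) is the delicate part: given an unbounded orbit of $S_{\alpha,\beta}$, one must show that nearby parameters at finite $n$ lie in $\cE$ rather than $\cN$, which amounts to the persistence of the relevant invariant barriers under the $\cC^k$-perturbations relating $\widehat G_{\alpha+n,\beta/\sqrt{n}}$ to $S_{\alpha,\beta}$. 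This is where the main obstacle lies: at the boundary $\partial A$ the breakup of the last invariant circles for the non-twist map is a notoriously delicate renormalisation-type phenomenon (as in MacKay--Meiss--Percival theory and its non-twist analogues, invoked implicitly in \cite{Shinohara2002DiffusionThreshold}). For this reason I would not expect the conjecture to hold verbatim at parameters on $\partial A$; a provable version likely requires passing to closures or interiors, or restricting to a generic subset of parameters, and a complete proof would probably need to combine the soft semicontinuity arguments sketched above with a quantitative renormalisation analysis of the critical case.
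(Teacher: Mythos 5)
The statement you are asked to prove is Conjecture 7.1, which the authors explicitly leave as an open problem; the paper contains no proof of it, so there is nothing to compare your argument against. Your sketch nevertheless correctly identifies the natural starting point --- the rescaling $\Phi_\alpha$ of Proposition~\ref{p.standard_nontwist_rescaling}, together with the reduction of membership in $\cE$ to a uniform vertical displacement bound via Propositions~\ref{p.diffusion_threshold} and \ref{prop:axis-fb} --- and it correctly locates the substantial obstacle: passing from soft semicontinuity estimates to an actual Hausdorff limit requires controlling the breakup of the last (shearless) invariant circles of the non-twist map near $\partial A$, a renormalization-critical phenomenon for which no rigorous theory exists. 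This is precisely why the paper only proves the one-sided scaling bounds of Theorem~\ref{t.scaling} and poses the sharper self-similarity statement as a conjecture; your caveat that a provable version would likely need closures, interiors, or a generic restriction is essentially the right assessment of the state of the art.

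Two further points are worth recording about the conjecture as stated, both of which you brush against. The $\sqrt{n}$ versus $\sqrt{\kappa n}$ discrepancy you observe is real: since $\Phi_\alpha(x,y)=(x,\sqrt{\kappa\alpha}(y-1/4))$ with $\kappa=4\pi^2$, the rescaling $M_2(\cdot,\sqrt{n})$ differs from the natural one by the constant factor $\sqrt{\kappa}$, and this must be absorbed in the definition of $A_n$ or of $S_{\alpha,\beta}$ before the conjectured convergence can hold on the nose. More seriously, there appears to be a sign inversion which you should not paper over with the parenthetical ``(non-)existence'': the set $\cE$ corresponds via Proposition~\ref{p.diffusion_threshold} to \emph{bounded} vertical displacement, so the natural Hausdorff limit of $A_n$ is the set of parameters where $S_{\alpha,\beta}$ admits only bounded orbits, not the set $A$ where it admits unbounded ones. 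Concretely, $(\alpha,0)\in\cE$ for every $\alpha$, so $[0,1]\times\{0\}\subseteq A_n$ for all $n$, while $S_{\alpha,0}$ is an integrable non-twist map with all orbits bounded, so $(\alpha,0)\notin A$; this already rules out Hausdorff convergence of $A_n$ to $A$ verbatim, and the intended limit is presumably the complement of $A$ in $[0,1]^2$ (equivalently, $\cE$ should be replaced by $\cN$ in the definition of $A_n$). Any serious attempt at the conjecture should begin by fixing the statement.
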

 Various further questions may be asked about the tongue structure that appears
 in Figures~\ref{f.emptyint} and \ref{f.emptyint_along_axes}. On a heuristic
 level, it seems plausible that the tongues of the region $\cN$ that reach into
 the region $\cE$ should somehow correspond to `resonances' appearing in the
 dynamics that make it easier to break all KAM circles, so that diffusion can
 take place. This should correspond to the appearence and disappearence of
 certain periodic orbits. However, the precise mechanisms are not at all clear
 to us. We refer to
 \cite{HowardHohs1984Reconnection,Shinohara2002DiffusionThreshold,Leboeuf1998KickedHarper,LeboeufKurchanFeingoldArovas1990PhaseSpaceLocalization,Zaslavsky2007PhysicsOfChaos}
 for more details and some phenomenological descriptions.

\subsection{Monotonicity properties}

Another aspect that is not well-understood and prompts a multitude of questions
is that of the dependence of the rotation set on the parameters.  Apart from the
continuity derived in Section~\ref{RotationSetContinuity}, little is
known. Specifically, one may ask about monotonicity properties: when do $0\leq
\alpha\leq \tilde\alpha$ and $0\leq\beta\leq\tilde\beta$ imply
$\rho(F_{\alpha,\beta})\ssq\rho(F_{\tilde\alpha,\tilde\beta})$. For instance, we
have a natural upper bound $\rho(F_{\alpha,\beta})\ssq
[-\alpha,\alpha]\times[-\beta,\beta]$ on the rotation set. However, while this
upper bound grows monotonically with the parameters, the same is not true in
general for the rotation set itself.

One way to see this is to consider parameters $\alpha=0$ and $\beta\in
(0,1)$. In this case, we have
$\rho(F_{0,\beta})=\{0\}\times[-\beta,\beta]$. However, for any parameter pair
$(\alpha,\beta)$ an average vertical displacement of $\beta$ is only possible if
an orbit stays exactly on the vertical line $\{1/4\}\times\R$, or converges to
it. This is not possible for $\alpha\in(0,1)$, so that $(0,\beta)\notin
\rho(F_{\alpha,\beta})$ in this case, and therefore $\rho(F_{0,\beta})
\nsubseteq \rho(F_{\alpha,\beta})$.

In contrast to this, numerical simulations based on
\cite{JaegerPadbergPolotzek2017RotationSets} suggest that the rotation set
behaves monotonically along the diagonal.

\begin{conj}
  If $0\leq \alpha \leq \tilde\alpha$, then
  $\rho(F_{\alpha,\alpha})\ssq\rho(F_{\tilde \alpha,\tilde \alpha})$.
\end{conj}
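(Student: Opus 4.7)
The strategy I would attempt is a variational realization of rational rotation vectors combined with a continuation argument, exploiting the reversibility and $\pi/2$-rotation symmetry of the diagonal family recorded in Section~\ref{sec:sym}. First, since $V_\alpha$ and $H_\alpha$ are each the time-$1$ map of an explicit autonomous Hamiltonian on $\R^2$, the map $F_{\alpha,\alpha}=H_\alpha\circ V_\alpha$ is the time-$1$ map of a piecewise-autonomous time-dependent Hamiltonian flow. Following the classical recipe for compositions of twist maps, this provides, for each $(p,q)\in\Z^2$ and $N\in\N$, a discrete action functional $S_\alpha^{(p,q,N)}$ on the space of configurations $(x_n,y_n)_{n=0}^N$ with $(x_N,y_N)=(x_0+p,y_0+q)$, whose critical points are exactly the lifts of $N$-periodic orbits of $f_{\alpha,\alpha}$ of homological translation $(p,q)$. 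By $\Z^2$-equivariance the functional descends to a compact quotient, so minimizers exist.

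Second, I would upgrade the realization statement of Proposition~\ref{p.reali} to a variational selection: for every rational rotation vector $v=(p/N,q/N)\in\inte(\rho(F_{\alpha,\alpha}))$, a $(p,q,N)$-translating orbit realizing $v$ should be choosable as the global minimizer of $S_\alpha^{(p,q,N)}$. This is an Aubry--Mather-type statement that must be established from scratch, since $F_{\alpha,\alpha}$ is not itself a twist map; the decomposition into two transversal twists is the natural substitute, and rational extremal points of $\rho(F_{\alpha,\alpha})$ are handled separately via Proposition~\ref{p.reali}(1). The heart of the argument is then a continuation: track the minimizer as $\alpha$ grows along the diagonal and show that a translating orbit with the same homological displacement $(p,q)$ persists for every $\tilde\alpha\geq\alpha$. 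Once this is established, density of rational rotation vectors in $\rho(F_{\alpha,\alpha})$ (which has nonempty interior by Corollary~\ref{c.diagonal}), combined with the upper semicontinuity of $\rho$ from Proposition~\ref{p.rotset_semi}, yields $\rho(F_{\alpha,\alpha})\ssq\rho(F_{\tilde\alpha,\tilde\alpha})$.

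The crux is the persistence of minimizers as $\alpha$ grows. In a generic one-parameter family of variational problems, minimizers can fail to survive because of bifurcations or degeneracies, and there is no a~priori reason why the minimum value $c_\alpha(p,q,N):=\min S_\alpha^{(p,q,N)}$ should vary monotonically in $\alpha$. The hope is that the rigidity provided by the reversibility relations~(\ref{eq:reversible}) and the $\pi/2$-rotation symmetry~(\ref{eq:sym-R}) of the diagonal family is enough to rule out such pathologies, but this is exactly the point that would require genuinely new ideas. Notably, the analogous monotonicity question for the Chirikov--Taylor standard family is a long-standing open problem, so one should expect this conjecture to be similarly delicate. An alternative line of attack would be the forcing theory of Le~Calvez--Tal used elsewhere in this paper: show that the existence of a $(p,q,N)$-translating orbit at parameter $\alpha$ forces its existence at all larger $\tilde\alpha$ on the diagonal via some topological continuation. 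However, it is unclear how to make such a forcing relation survive the large deformation from $F_{\alpha,\alpha}$ to $F_{\tilde\alpha,\tilde\alpha}$, so this approach appears to face the same fundamental obstruction as the variational one.
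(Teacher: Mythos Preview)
The statement you are attempting to prove is labelled as a \emph{Conjecture} in the paper and is presented in Section~\ref{questionsandremarks} (``Questions and final remarks'') precisely as an open problem. The paper offers no proof and explicitly says only that ``numerical simulations based on \cite{JaegerPadbergPolotzek2017RotationSets} suggest that the rotation set behaves monotonically along the diagonal.'' There is therefore nothing to compare your proposal against.

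As for the proposal itself, you have correctly identified that it is not a proof but a strategy with a clearly marked gap. The gap is genuine and, as you note, likely deep: there is no mechanism in your outline that forces a minimizing $(p,q,N)$-translating orbit at parameter $\alpha$ to survive as $\alpha$ increases. Minimizers of parametrized action functionals can disappear through saddle--node bifurcations, and neither the reversibility relations~(\ref{eq:reversible}) nor the $\pi/2$-rotation symmetry~(\ref{eq:sym-R}) give any obvious obstruction to this. Your Aubry--Mather step is also not innocuous: the theory that guarantees minimizers realize interior rational rotation vectors is developed for twist maps, and writing $F_{\alpha,\alpha}$ as a composition of two transversal twists does not automatically yield an Aubry--Mather theory for the composition. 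So the proposal has two substantial gaps rather than one, and the paper's authors evidently regard the problem as open.
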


\subsection{Mode-locking}

A well-known and -studied phenomenon in the context of rotation theory is that
of mode-locking, which refers to the stability of rotation numbers, vectors or sets
under perturbations of the system. In the context of torus dynamics, it was
shown in \cite{Passeggi2013RationalRotationSets} that there exists an open and
dense subset of $\homeo_0(\torus)$ on which the rotation set is locally constant
and a rational polygon, and in \cite{GuiheneufKoropecki2016RotationSetStability}
the same statement was shown to hold when restricted to
${\mathrm{Homeo}}^\mathrm{ap}_0(\torus)$.  However, it is not clear if the
analogous statement is still true if one restricts to the parameter family $(f_{\alpha,\beta})_{\alpha,\beta\in\R}$, although the recent results in \cite{LeCalvezZanata2015RationalModeLocking}, showing that for any analytic one parameter family $G_t\in {\mathrm{Homeo}}^\mathrm{ap}_0(\torus)$ the rotation 
set cannot strictly increase over a whole interval $I\ssq \R$ (that is, 
$\rho(G_s)\subset \inte(\rho(G_t))$ cannot hold for all $s<t$ in $I$), point in that direction.

So, the following questions are open. 
\begin{itemize}
\item Is it true that there exists an open and dense set $M\ssq\R^2$ such that
  the mapping $(\alpha,\beta)\mapsto \rho(F_{\alpha,\beta})$ is locally constant
  on $M$?
\item Is it true that whenever the mapping
  $(\alpha,\beta)\mapsto\rho(F_{\alpha,\beta})$ is locally constant, the
  rotation set is a rational polygon?
\item Is there an open and dense set $A\ssq \R$ such that the mapping
  $\alpha\mapsto \rho(F_{\alpha,\alpha})$ is locally constant and only has
  rational polygons as images on $A$.
\end{itemize}

In this context, we note that the numerical computation or approximation of
rotation sets is an intricate problem in itself, such that it is difficult to
obtain numerical evidence concerning the occurrence or density of
mode-locking. We refer to \cite{JaegerPadbergPolotzek2017RotationSets} for
details on the numerical aspects. Using the algorithm developed there, it is
possible to identify some specific mode-locked regions in the kicked Harper
family, for instance around parameters $(\alpha,\beta)=(0.66,0.66)$. A particular case where it is possible to detect mode-locking is at $(0.5,0.5)$; near these values, $\rho(F_{\alpha,\beta})$ is the square with vertices $(-1/2,0)$, $(0, -1/2)$, $(1/2,0)$, $(0, 1/2)$, as we mentioned in \S\ref{sec:apriori}.

\subsection{Shape of rotation sets}

Another general open problem in torus dynamics is that of the possible shapes of
rotation sets. Due to \cite{MisiurewiczZiemian1989RotationSets}, it is known that
the rotation set of a torus homeomorphism is always convex, and Kwapisz showed
that every rational polygon (a polygon with all vertices in $\Q^2$) are
realised. Moreover, a few examples of non-polygonal rotation sets have been
described \cite{kwapisz:1995,BoylandDeCarvalhoHall2016NewRotationSets}, but all
of these only have a countable number of extremal points. Hence, it is
completely open if a set like the unit disk may appear as the rotation set of a
torus homeomorphism.

\begin{itemize}
\item Which sets do appear as rotation sets in the family
  $(F_{\alpha,\beta})_{\alpha,\beta\in\R}$?
\end{itemize}

\subsection{Phase space}

Finally, questions that are typically studied in the context of the Chirikov-Taylor standard family (of area-preserving twist maps) may equally be asked for the
kicked Harper model.
\begin{itemize}
\item Do elliptic islands exist for all/Lebesgue-almost all parameters
  $\alpha,\beta\neq 0$.
\item Conversely, are there parameters for which the kicked Harper map is
  topologically transitive/ergodic with respect to Lebesgue?
\item What is the Lebesgue measure of the complement of the union of all
  KAM circles/elliptic islands?
\end{itemize}

\subsection{Transverse foliation}

A number of recent advances in surface dynamics have been based on the concept
of transverse foliations (Brouwer-Le Calvez foliations) and a related forcing
theory developed in \cite{LeCalvezTal2015ForcingTheory}. As we have not made use
of this theory, we refrain from going into more detail here. However, for
readers that are familiar with the topic, we want to point out that the
existence of a transverse foliation (which in general follows from the work of
Le Calvez in \cite{LeCalvez2005FoliatedBrouwer}) can be seen quite easily for
the kicked Harper model. If one considers the homotopy $(h^t)_{t\in[0,1]}$ between the
identity and $f_{\alpha,\beta}$ lifted by 
\[
 (x,y) \mapsto H^t(x,y) = 
\begin{cases}
  (x,y+2t\beta s(x)), & 0\leq t\leq 1/2\\ (x+(2t-1)\alpha s(y+\beta s(x)),y+\beta
  s(x)) \ , & 1/2< t\leq 1
\end{cases}
\]
then each path of this isotopy that connects a point $(x,y)$ to its image under
$f_{\alpha,\beta}$ consists of a vertical and a horizontal segment (possibly
degenerate). Moreover, the orientation of these segments is only determined by
the quadrant of $\torus$ in which the segment starts. This allows to see that
the oriented foliation shown in Figure~\ref{f.foliation} is positively
transverse to the dynamics, that is, the paths of the homotopy the leaves of the
foliation in a transverse way from left to right, for all parameters
$\alpha,\beta\neq 0$ at the same time. The four common fixed points
$(0,0),(1/2,0),(0,1/2),(1,1)$ of the kicked Harper maps are singularities of the
foliation.
\begin{figure}
\begin{center}
  \includegraphics[scale=0.5]{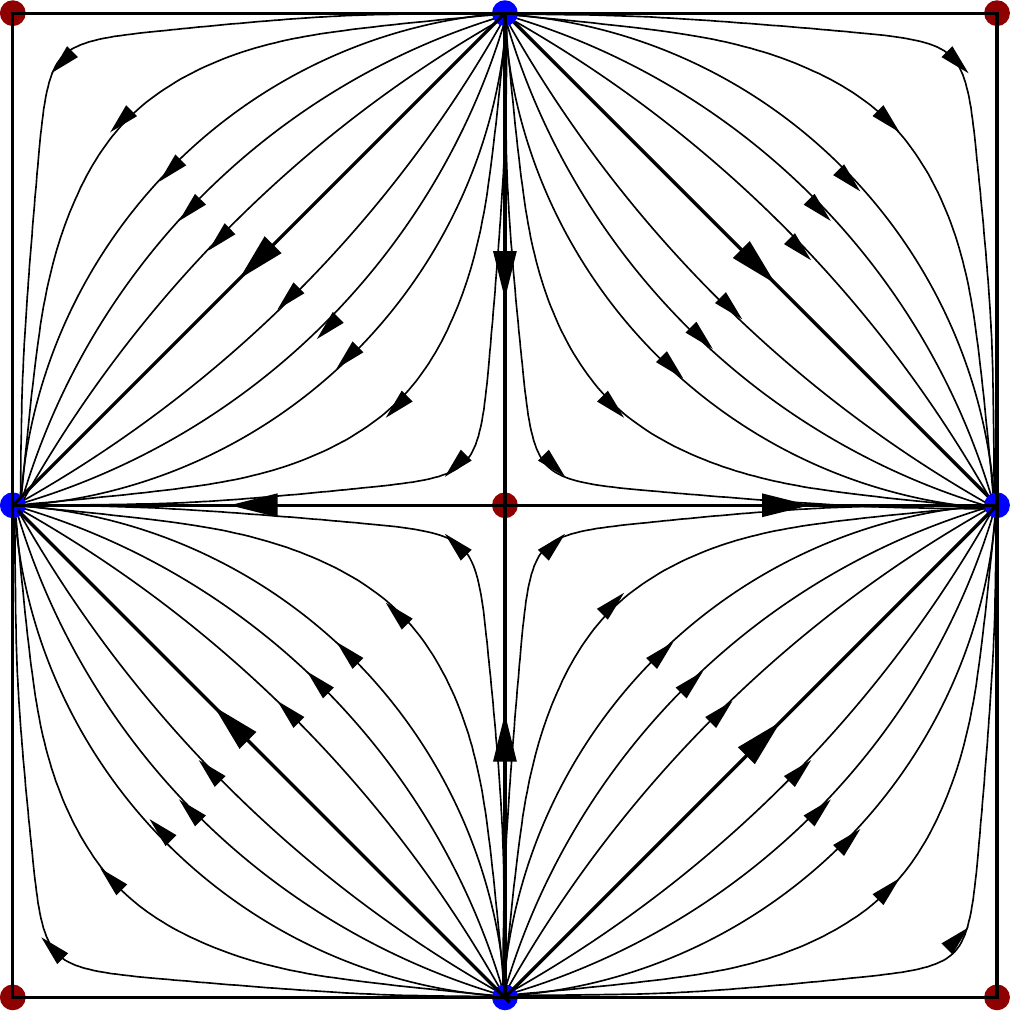}
  \caption{\label{f.foliation}Transverse foliation for the kicked Harper map with $\alpha,\beta\neq 0$.}
\end{center}
  \end{figure}

\section*{Appendix}
\addtocounter{section}{1}

\begin{thm}[$\cC^r$-convergence of Euler's method]\label{th:euler}
  For each $r\geq 1$ and $M>0$ there exists $C_r = C_r(M) > 0$ such that the following property holds.
Let $z\mapsto V(z)$ be a $\cC^{r+1}$ vector field $(r\geq 1)$ in $\R^n$, and
$z_0\in \R^n$ a point such that the corresponding flow $\phi^t(z_0)$ is defined for
all $t\in [0,1]$, and assume that the $\cC^{r+1}$ norm of $V(z)$ is at most $M$
for all $z$ in the $\epsilon$-neighborhood $U_\epsilon$ of $\{\phi^t(z_0): t\in
[0,1]\}$. Then the function $G_\delta(z) = z + \delta V(z)$ satisfies
$$\|D^rG_\delta^{n}(z_0)-D^r\phi^{n\delta}(z_0)\|\leq C_r\delta$$
for all $0<\delta<\min\{1,\epsilon\}/C_r$ and $n\leq\floor{1/\delta}$.
\end{thm}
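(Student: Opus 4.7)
The plan is to reduce the $\cC^r$ statement to the classical $\cC^0$ convergence of Euler's method, applied to the variational system governing all derivatives of the flow up to order $r$. For $0 \leq k \leq r$ set $A_k(t) = D^k\phi^t(z_0)$, where the derivatives are taken with respect to the initial point $z_0$; thus $A_0(t) = \phi^t(z_0)$, $A_1(t)$ is an $n\times n$ matrix, and in general $A_k(t)$ is a symmetric $k$-linear map from $(\R^n)^k$ to $\R^n$. Differentiating $\dot A_0 = V(A_0)$ successively with respect to $z_0$ and applying the Fa\`a di Bruno formula, one obtains variational equations $\dot A_k = \tilde V_k(A_0,A_1,\ldots,A_k)$, where each $\tilde V_k$ is polynomial in $A_1,\ldots,A_k$ with coefficients involving $DV(A_0), \ldots, D^k V(A_0)$. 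Assembling these equations produces an autonomous vector field $\tilde V$ on the extended space $E_r$ of $r$-jets of maps $\R^n \to \R^n$. Since $V \in \cC^{r+1}$ with $\cC^{r+1}$-norm bounded by $M$ on $U_\epsilon$, the vector field $\tilde V$ is of class $\cC^1$ with uniform $\cC^1$-bounds on any bounded region projecting into $U_\epsilon$, depending only on $r$ and $M$.

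The main observation is that Euler iteration for $\tilde V$ exactly reproduces the derivatives of Euler iteration for $V$. Using $DG_\delta(z) = I + \delta DV(z)$ and $D^k G_\delta(z) = \delta D^k V(z)$ for $k \geq 2$, a short induction via the chain rule (again Fa\`a di Bruno) shows that when the extended Euler scheme is initialised at the $r$-jet of the identity $(z_0, I, 0, \ldots, 0)$, its $n$-th iterate equals $(G_\delta^n(z_0),\, DG_\delta^n(z_0),\, \ldots,\, D^r G_\delta^n(z_0))$. Consequently, bounding $\|D^r G_\delta^n(z_0) - D^r\phi^{n\delta}(z_0)\|$ is the same as bounding the $\cC^0$ error between one-step Euler and the true flow of $\tilde V$ after $n$ steps.

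The conclusion follows from the textbook Euler convergence argument applied in $E_r$. Gronwall applied to the variational equations gives a priori bounds on $\|A_k(t)\|$ for $t \in [0,1]$ depending only on $r$ and $M$, so the exact trajectory of $\tilde V$ stays in a fixed compact set $K \subset E_r$; analogous bounds hold for the Euler iterates as long as $\delta$ is small, so both trajectories remain in a common bounded neighbourhood where $\tilde V$ is uniformly $\cC^1$. The standard argument then proceeds via the telescoping decomposition
\[
  \tilde G_\delta^{\,n}(w_0) - \tilde\phi^{n\delta}(w_0) \;=\; \sum_{k=0}^{n-1} \tilde G_\delta^{\,n-1-k}\bigl(\tilde G_\delta(\tilde\phi^{k\delta}(w_0))\bigr) - \tilde G_\delta^{\,n-1-k}\bigl(\tilde\phi^{\delta}(\tilde\phi^{k\delta}(w_0))\bigr),
\]
combined with the single-step Taylor error $\|\tilde G_\delta(w) - \tilde\phi^\delta(w)\| = O(\delta^2)$ and the $(1 + O(\delta))$-Lipschitz bound on each Euler step, yielding a $\cC^0$ error of order $\delta$ after $n \leq \floor{1/\delta}$ iterations with constant $C_r$ depending only on $r$ and $M$. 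The main technical effort lies in the Gronwall-type a priori bounds in the lifted system and the verification that the Euler trajectory stays within the region where these bounds hold (which is what forces the restriction $\delta < \min\{1,\epsilon\}/C_r$); the identification of the lifted Euler scheme with the derivatives of $G_\delta^n$, while combinatorially involved, is essentially bookkeeping.
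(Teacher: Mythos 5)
Your proposal is correct and is essentially the paper's own argument: both proofs lift to the jet space via the Fa\`a di Bruno variational equations, verify that Euler iteration in the lifted system reproduces the derivatives $D^k G_\delta^n$, use Gronwall to keep the trajectories in a region with uniform $\cC^1$-bounds, and then invoke the scalar $\cC^0$ Euler error estimate. The only cosmetic difference is that you spell out the telescoping decomposition underlying the classical $\cC^0$ bound, whereas the paper simply quotes that estimate.
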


\begin{proof}[Sketch of the proof]
Without loss of generality we assume $\epsilon<1$. Denote by $M_r$ the $\cC^r$
norm of $V$ in $U_{\epsilon}$.  Iterating $G_\delta$ produces an Euler
approximation of the solutions of $z' = V(z)$, and we have the following
well-known estimate for the error in Euler's method:
 $$\|\phi^{n\delta}(z_0) - G_\delta^n(z_0)\| \leq \delta M_0(e^{M_1\delta(n+1)}
 - 1),$$ which holds for all $n$ such that the right hand side is smaller than
 $\epsilon$. In particular, if $C_0 = M_0(e^{2M_1} - 1)$ then
$$\|\phi^{n\delta}(z_0) - G_\delta^n(z_0)\| \leq C_0 \delta$$
holds whenever $\delta<\epsilon/C_0$ and $n\leq n_\delta$.
Thus the claim holds for $r=0$.

To get a similar estimate for the derivatives, we will use the previous
observations in a new vector field. To avoid cumbersome notation with higher
order derivatives, we omit details about the spaces to which each object
belongs; this should be clear from context.

We will use the following notation: $D_*^kV(z) = (DV(z), D^2V(z), \dots,
D^kV(z))$.  Let $\Gamma_k$ be a $\cC^\infty$ map such that if $f,g\colon \R^n\to
\R^n$ are two $\cC^k$ maps and $h=f\circ g$, 
$$\Gamma_k(D_*^kf(g(z)), D_*^kg(z)) =D^k h(z).$$
An explicit formula for $\Gamma_k$ can be given (for instance Faa di Bruno's formula).

Let $u = (z, u_1, \dots, u_r)$, $W_0(u) = V(z)$,
$$W_k(u) = \Gamma_k(D_*^kV(z), u_1, \dots, u_k)$$
and 
$$W(u) = (W_0(z), W_1(z, u_1), \dots, W_r(z, u_1, \dots, u_r)).$$

Then it is easy to verify that the solution to
\begin{equation}\label{eq:ODEr}
u' = W(u)
\end{equation}
with initial condition $u(0) = v(z) := (z, I, 0, \dots, 0)$ is 
\begin{equation}
\label{eq:phir}
\phi^t_r(v(z)) = (\phi^t(z), D\phi^t(z), \dots, D^r(\phi^t(z)).
\end{equation}

Let $G_{r,\delta}(u) = u + \delta W(u)$ be the Euler approximation of the flow
given by the vector field $W$. If $U_\epsilon^r$ denotes the
$\epsilon$-neighborhood of $\{\phi^t_r(v(z_0)) : t\in [0,1]\}$, we then know
from the case $r=0$ that there exists $C_r>0$ such that whenever
$\delta<\epsilon/C_r$ and $n\leq n_\delta$,
\begin{equation}\label{eq:Grconv}
\norm{\phi^{n\delta}_r(v(z_0)) - G^n_{r, \delta}(v(z_0))} \leq C_r\delta.
\end{equation}
where $C_r$ depends only on the $\cC^1$ norm of $W$ in $U_\epsilon^r$. 

For $t\in [0,1]$ and $1\leq k\leq r$, there is a uniform bound
$\norm{D^k\phi^{t}_r(v(z_0))}\leq K$ depending only on $M_r$. This can be seen
noting (for instance from Faa di Bruno's formula) that
$$\Gamma_k(D_*^kV(z), (u_1, \dots, u_k)) = \Lambda_k(D_*^kV(z), u_1, \dots,
u_{k-1}) + DV(z)u_k,$$ where $\Lambda_k$ is another (explicit) function, and
applying Gronwall's inequality for each coordinate $u_k$ in (\ref{eq:ODEr})
inductively. We leave these details to the reader.

This implies that any $u\in U_{\epsilon}^r$ satisfies $\norm{u_i}\leq K+\epsilon
\leq K+1$ for $1\leq i\leq r$. Using this fact and the explicit form of $W$ we
see that the $\cC^1$ norm of $W$ in $U_\epsilon^r$ is bounded by a constant
depending only on $M_{r+1}$. In particular the constant $C_r$ above depends only
on $M_{r+1}$.

Since the $r$-th coordinate of $\phi^{n\delta}_r(v(z_0))$ is
$D^r\phi^{n\delta}(z_0)$, in view of (\ref{eq:Grconv}) and (\ref{eq:phir}), to
complete the proof it suffices to show that
\begin{equation}
 G_{r, \delta}^n(v(z)) = (G_\delta^n(z), DG_\delta^n(z), \dots, D^rG_\delta^n(z)).
\end{equation}

This clearly holds when $n=0$ due to the defintion of $v(z_0)$; and for $n\geq 0$ 
$$G_{r,\delta}^{n+1}(v(z)) = G_{r,\delta}(G_{r,\delta}^n(v(z))) =
G_{r,\delta}^n(v(z)) + \delta W(G_{r,\delta}^n(v(z))).$$ So assuming by
induction that the claim holds for $n$, looking at the $k$-th coordinates we get
$$G_{r,\delta}^{n+1}(v(z))_k = D^kG_\delta^n(z) + \delta\Gamma_k(G_\delta^n(z),
DG_\delta^n(z), \dots D^kG_\delta^n(z)).$$ Using the definition of $\Gamma_k$,
this is equal to
$$D^kG_\delta^n(z) + \delta D^kV(G_\delta^n(z)) = D^k(G_\delta(G_\delta^n(z))) = D^k G_\delta^{n+1}(z),$$
which proves the induction step. This completes the proof.

\end{proof}

\bibliography{dynamics} \bibliographystyle{alpha}

\end{document}